\documentclass[10pt]{amsart}
\usepackage{amsfonts,amsmath,amssymb,amsthm, mathrsfs}
\usepackage{graphicx,epsfig}
\usepackage{bbm}
\usepackage{t1enc}
\usepackage{subfig}
\usepackage{hyperref}
\usepackage{algorithm}
\usepackage{algorithmic,hyperref}
\usepackage{a4wide}

\usepackage{pstricks-add}
\theoremstyle{plain}

\numberwithin{equation}{section}
\newtheorem{theorem}{Theorem}[section]
\newtheorem{proposition}[theorem]{Proposition}
\newtheorem{corollary}[theorem]{Corollary}
\newtheorem{lemma}[theorem]{Lemma}
\newtheorem{definition}[theorem]{Definition}
\newtheorem{remark}[theorem]{Remark}

\usepackage{color}
\definecolor{darkred}{rgb}{0.8,0,0}
\definecolor{darkblue}{rgb}{0,0,0.7}
\definecolor{darkgreen}{rgb}{0,0.4,0}

\newcommand{\EEE}{\color{black}} 

\newcommand{\eps}{\varepsilon}

\renewcommand{\ln}{\log}

\newcommand{\un}{{\rm 1\kern -2.5pt l}}

\def\vv{\mathbf{v}}

\newcommand{\BBB}{\color{black}}

\def\widthh{6}

\begin{document}
\title{The minimal resistance problem in a class of  non convex bodies}
\author[E. Mainini, M. Monteverde, E. Oudet, D. Percivale]{Edoardo Mainini, Manuel Monteverde, Edouard Oudet and Danilo Percivale}

\keywords{Newton minimal resistance problem, shape optimization}
\subjclass[2010]{49Q10, 49K30}
%
\address{Edoardo Mainini -- Universit\`a degli Studi di Genova,
Dipartimento di
   Ingegneria meccanica, energe\-tica, gestionale
e dei trasporti (DIME), Piazzale Kennedy 1, I-16129 Genova, Italy
}
\email{mainini@dime.unige.it}
\address{Manuel Monteverde -- Universit\`a degli Studi di Genova,
Dipartimento di
   Ingegneria meccanica, energe\-tica, gestionale
e dei trasporti (DIME), Piazzale Kennedy 1, I-16129 Genova, Italy}
\email{monteverde@diptem.unige.it}
\address{\'Edouard Oudet -- Laboratoire Jean Kuntzmann, Universit\'e Grenoble-Alpes,
B\^atiment IMAG, BP 53 38041 Grenoble Cedex 9}
\email{edouard.oudet@imag.fr}
\address{Danilo Percivale -- Universit\`a degli Studi di Genova,
Dipartimento di
   Ingegneria meccanica, energe\-tica, gestionale
e dei trasporti (DIME), Piazzale Kennedy 1, I-16129 Genova, Italy}
\email{percivale@diptem.unige.it}

%
%

\thanks{}

\maketitle

\begin{abstract}
We characterize  the solution to the Newton minimal resistance problem in a class of radial  $q$-concave profiles. We also give the corresponding result for one-dimensional profiles. Moreover, we provide a numerical optimization algorithm for the general nonradial case.

\end{abstract}

\section{Introduction}


A classical problem in the calculus of variations is the minimization of the Newton  functional
\begin{equation*}
D_\Omega(u)=\int_{\Omega}\frac{dx}{1+|\nabla u(x)|^2}.
\end{equation*}
Here,
  $\Omega\subset\mathbb{R}^2$  is a convex set representing the prescribed cross section at the rear end of a body, which  moves with constant velocity through a rarefied fluid in the  orthogonal direction to $\Omega$. The graph of $u:\Omega\to\mathbb{R}$ represents the shape of the body front.
  According to Newton's law the aerodynamic resistance is expressed (up to a dimensional constant) by $D_\Omega$, owing to  the physical assumption of a fluid constituted by independent small particles, each elastically hitting
against the front of the body at most once 
(the so called {\it single shock} property).
As Newton's resistance law is no longer valid when such property
does not hold,
 a relevant design class of profiles for the problem is
$$\mathcal{S}^M(\Omega) =\{u:\Omega\to[0,M]\colon
\text{almost every fluid particle hits the body at most once}\}.
$$
This condition can be rigorously stated as follows:
 for $\Omega$  an open bounded convex subset of $\mathbb{R}^{2}$, we say that $u\colon\Omega\to\mathbb{R}$ is a \textit{single shock function} on $\Omega$ if $u$ is  a.e. differentiable in $\Omega$ and
\begin{equation*}\label{single}u\left(x-\tau\nabla u(x)\right)\le u(x)+\dfrac{\tau}{2}\left(1-|\nabla u(x)|^{2}\right)
\end{equation*}
holds for a.e. $x\in\Omega$ and for every $\tau>0$ such that  $x-\tau\nabla u(x)\in\Omega$, see \cite{BFK2, clr2, P1}. $ \mathcal{S}^M(\Omega)$ is then defined as the class of single shock functions on $\Omega$ that take values in $[0,M]$. The specified maximal cross section $\Omega$ and the restriction on the body length (not exceeding $M>0$) represent given design constraints.

Actually, 
 $\mathcal{S}^M(\Omega)$ lacks of the necessary compactness properties in order to gain  the existence of a global minimizer.
 It is shown  in  \cite{P2} that a minimizer in the class of functions $\mathcal{S}^M(\Omega)$  does not exist and that  the infimum  in this class is
$$\int_{\Omega}\dfrac{1}{2}\left (1-\dfrac{M}{\sqrt{M^{2}+d^{2}(x)}}\right )\,dx,$$
where $d(x)=dist(x,\partial \Omega)$.
This result seem to show that optimal shapes for Newton's aerodynamics can be approximated only by very jagged profiles, practically not to be configured in an engineering project.

Among the different choices in the literature,
 the most classical  set of competing profiles is
\begin{equation*}\label{cm}\mathcal C^{M}(\Omega):=\left\{u:\Omega \to [0, M]:\ \text{ u is concave}\right\},
\end{equation*}
 which automatically implies   the single shock property,  ensures  existence of global minimizers (see \cite{B,BFK2,BG,M}),
  and is more easily configurable. By further assuming radiality, the solution in $\mathcal{C}^M(\Omega)$ ($\Omega$ being a ball in $\mathbb{R}^2$) was described by Newton and it is classically known, see for instance \cite{B,BK,G}. If we reduce the minimization problem in $\mathcal{C}^M(\Omega)$ to the one-dimensional case (i.e., $\Omega$ is an interval in $\mathbb{R}$) the solution is also explicit and easy to determine, see \cite{BK}.
  On the other hand, one of the most interesting features of the Newton resistance functional is the symmetry breaking property, as detected in \cite{BFK1}: the solution among concave functions on a ball in $\mathbb{R}^2$ is not radially symmetric  (and not explicitly known).

The design class $\mathcal{C}^M(\Omega)$ is still  quite restrictive, and
there is a huge gap with the natural class $\mathcal{S}^M(\Omega)$. Indeed, solutions can also be obtained in intermediate classes.  In \cite{clr3,clr2}, existence of  { global} minimizers is shown  among radial profiles in the $W^{1,\infty}_{loc}(\Omega)\cap C^{0}(\bar\Omega)$-closure
of polyhedral functions $u:\Omega\to[0,M]$  ($\Omega$ being a ball in $\mathbb{R}^2$) satisfying the single shock condition.
 In this paper, we are interested in minimizing  the Newton functional in another class of possibly hollow profiles,
  without giving up 
   a complete characterization of one-dimensional and
and radial two-dimensional minimizers. 
  We choose the class of $q$-concave  functions $u$ on $\Omega$ (i.e., $\Omega\ni x\mapsto u(x)-\tfrac q2 |x|^2$ is concave), with height not exceeding the fixed value $M$. That is, 
   given $M>0$ and $q\ge 0$, we let
 $$\mathcal{C}^M_q(\Omega):=\{\left.u\colon \Omega\to [0,M]\right| \,u \text{ is }q\text{-concave on } \Omega\},$$
and we wish to find the minimal resistance among profiles in $\mathcal{C}^M_q(\Omega)$.
We refer to the Appendix at the end of the paper for a discussion about the relation between the two classes $\mathcal{C}^M_q(\Omega)$ and $\mathcal{S}^M(\Omega)$: among $q$-concave functions, the single shock condition is indeed reduced to $q\,\mathrm{diam}(\Omega)\le 2$.
 Of course, for $q=0$ we are reduced to the classical problem in $\mathcal{C}^M(\Omega)$.
If $q>0$, the existence of minimizers is obtained in the same way. However, the characterization of the solution is more involved, even in one dimension ($\Omega$ being an interval in $\mathbb{R}$), and it represents our focus. As a main result we explicitly determine the unique optimal $q$-concave profile, both in the one-dimensional case  and in the radial two-dimensional case, see Section \ref{mainsec} for the statements, under a further {\it high profile} design constraint that we shall introduce therein.

In the one-dimensional case, the symmetry of the solution is not a priori obvious and it is a consequence of our analysis.
On the other hand,
   if $\Omega$ is a ball in $\mathbb{R}^2$ the symmetry breaking  phenomenon appears of course also in the $q$-concave case. 
  When leaving the radial framework, another relevant class is that of developable profiles as introduced in \cite{LP}, playing a role in the numerical approximations \cite{LO} of the optimal resistance.  In Section \ref{numericalsec}, we will  show how to extend the numerical solution of \cite{LO} to the $q$-concave case.

  As a last remark, 
  we notice that large values of $q$ are of course energetically favorable. However,  Newton's law is based on the 
    single shock property which  requires $q\,\mathrm{diam}(\Omega)\le 2$, as previously mentioned. If this restriction is not satisfied,  multiple shock models should be considered as discussed in  \cite{P2}.

\subsection*{Plan of the paper}
In Section \ref{mainsec} we state our two main results.  The first about the one-dimensional case,  $\Omega$ being a line segment. The second deals with the radial two-dimensional case, $\Omega$ being a ball in $\mathbb{R}^2$.  These results were announced in \cite{MMOP}, and they both provide uniqueness of the solution along with an explicit expression. The proofs are postponed to Section \ref{1dsec} and Section \ref{2dsec}, whereas Section \ref{preliminarysec} contains some preliminary results.  Section \ref{numericalsec} provides numerical results for the general $q$-concave two-dimensional problem, i.e., without radiality assumption.
The Appendix contains a discussion about single shock and $q$-concave classes.

\section{Main results}\label{mainsec}

\subsection*{One-dimensional case}
For a locally absolutely continuous function $u:(a,b)\to\mathbb{R}$,  the one-dimensional resistance functional is given by
\begin{equation*}\label{1dfunctional} D_{(a,b)}(u)=\int\limits_{a}^{b}\dfrac{dx}{1+u'(x)^{2}}\,.
\end{equation*}
Without loss of generality we consider the interval $(-1,1)$.  We introduce the variational problem
\begin{equation}\label{qconcprob}\min\limits_{u\in \mathcal{K}^M_q}\int\limits_{-1}^1\dfrac{dx}{1+u'(x)^2}\end{equation}
 for $M>0$ and $q\in [0,1]$,  where $$\mathcal{K}^M_q:=\{\left.u\colon [-1,1]\to [0,M]\right| \,u \text{ is }q\text{-concave on }[-1,1]\}.$$
 Admissible functions $u$ are here $q$-concave on the closed interval $[-1,1]$, meaning that $[-1,1]\ni x\mapsto u(x)-\tfrac q2 x^2$ is concave, and it is  not restrictive to assume they are continuous up to the boundary. \EEE
We will work under the further \textit{high profile} assumption  $2M\ge q$.
The restriction $q\le 1$ corresponds to the { single shock} condition in this case, see Lemma \ref{qshock} in the Appendix. We also refer to the appendix for the standard compactness arguments yielding existence of solutions.
 Our first main result is the following.
\begin{theorem}\label{main1d}
 Let $M> 0$ and $q\in [0,1]$ be such that $2M\ge q$. Then  problem \eqref{qconcprob} has a unique solution  given by
\[
u_{M;q}(x):=\left\{
\begin{array}{lcl}
\left\{
\begin{array}{lcl}
\dfrac q2 (x^{2}-\gamma_{M;q}^{2})+M &\text{if} &|x|\le \gamma_{M;q}\\
\dfrac M{1-\gamma_{M;q}}(1-|x|)&\text{if} &\gamma_{M;q}\le|x|\le 1\\
\end{array}
\right.&\text{if}&M\in (0,1)\vspace{.3cm}\\
M(1-|x|)&\text{if}&M\in [1,+\infty),
\end{array}
\right.\]
where $\gamma_{M;q}\in (0,1)$ is the unique minimizer of the function $R_{M;q}:[0,1]\to\mathbb{R}$ defined by
\begin{equation}\label{RMQ}
R_{M;q}(\gamma)=:\left\{\begin{array}{ll}\dfrac2q\arctan(q\gamma)+\dfrac{2(1-\gamma)^3}{M^2+(1-\gamma)^2}\quad&\mbox{ if $q>0$}\\
2\gamma+\dfrac{2(1-\gamma)^3}{M^2+(1-\gamma)^2}\quad&\mbox{ if $q=0$}.
\end{array}\right.\end{equation}
\end{theorem}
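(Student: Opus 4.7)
The plan is to extract the structure of any minimizer $u\in\mathcal{K}^M_q$ from first-order conditions and then reduce to a scalar optimization. Throughout let $v(x):=u(x)-\tfrac{q}{2}x^2$, which is concave on $[-1,1]$ by $q$-concavity.

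First I would show that one may assume $u(\pm1)=0$: if $u(1)=c>0$, replacing $u$ near $x=1$ by a steep linear descent to $0$ strictly lowers the resistance there while keeping $v$ concave (the slope of $v$ only drops at the splice). Next, on every open sub-interval where $u<M$ and $v$ is strictly concave, compactly-supported variations are admissible and the Euler--Lagrange identity $\frac{d}{dx}[u'/(1+u'^2)^2]=0$ reduces to $u''(1-3u'^2)=0$; by continuity this forces $u$ to be affine on such sub-intervals. Where instead the $q$-concavity constraint saturates (i.e.\ $v$ is affine on an interval), $u(x)=\tfrac{q}{2}x^2+ax+b$ there. A genuine plateau $u\equiv M$ on an interval $[c_1,c_2]$ is excluded: the $q$-quadratic arc $\tfrac{q}{2}(x-c_1)(x-c_2)+M$ attains $M$ at the endpoints, preserves admissibility (the junction concavity inequalities at $c_1,c_2$ are even weakened), and from $\arctan(\cdot)<(\cdot)$ one gets strictly smaller resistance on $[c_1,c_2]$.

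These reductions pin down any minimizer to the three-piece form: affine on $[-1,\alpha]$, of type $\tfrac{q}{2}x^2+ax+b$ on $[\alpha,\beta]$, and affine on $[\beta,1]$, with $u(\pm1)=0$. The minimization is thus reduced to optimizing over the four parameters $(\alpha,\beta,a,b)$. Stationarity in $b$ together with the junction inequalities forces the upper bound $u\le M$ to be active at both endpoints of the central piece (otherwise a small upward shift of the cap leaves its resistance unchanged while steepening both tails, decreasing $D$). Stationarity in $a$, combined with a half-reflection argument, yields $a=0$, and stationarity in $(\alpha,\beta)$ with the interface conditions then forces $\alpha=-\beta=:-\gamma$. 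Every minimizer therefore coincides with $u_{M;q,\gamma}$ as in the statement for some $\gamma\in[0,1]$, and substitution gives $D(u_{M;q,\gamma})=R_{M;q}(\gamma)$.

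It remains to minimize $R_{M;q}$ on $[0,1]$. A direct computation gives
\[
R'_{M;q}(\gamma)=\frac{2}{1+q^2\gamma^2}-\frac{2(1-\gamma)^2\bigl((1-\gamma)^2+3M^2\bigr)}{\bigl((1-\gamma)^2+M^2\bigr)^2},
\]
and the identity $(1+M^2)^2-(1+3M^2)=M^2(M^2-1)$ shows that $R'_{M;q}(0)$ has the sign of $M-1$. A sign/monotonicity analysis then yields uniqueness of $\gamma_{M;q}$, with $\gamma_{M;q}=0$ (and $u_{M;q}(x)=M(1-|x|)$) iff $M\ge 1$, and $\gamma_{M;q}\in(0,1)$ otherwise. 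The hypothesis $2M\ge q$ is used only to ensure $u_{M;q}\ge 0$, since $u_{M;q}(0)=M-\tfrac{q}{2}\gamma_{M;q}^2\ge M-q/2\ge 0$. The main technical obstacle is the symmetry step: $1/(1+t^2)$ has inflection points at $t=\pm1/\sqrt3$, so $D$ is neither convex nor concave in $u'$ and Jensen-type symmetrization is unavailable; the symmetry must be read off the first-order analysis of the reduced finite-dimensional problem together with the $q$-concavity junction inequalities.
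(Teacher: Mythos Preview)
The central gap is the leap from your local dichotomy to the global three-piece form. Your reductions say: (i) on intervals where $v$ is affine, $u$ is parabolic; (ii) on intervals where $u<M$ and two-sided variations are admissible, $u$ is affine; (iii) there is no $u\equiv M$ plateau. But nothing here rules out a minimizer built from, say, four consecutive parabolic arcs (take $v$ piecewise affine with four pieces and strictly decreasing slopes), or from several alternations affine/parabolic/affine/parabolic/affine. Eliminating such patterns is precisely the hard part of the problem, and it is the content of Lemma~\ref{struccenter} and Lemma~\ref{strucside} in the paper; their proofs proceed by piecewise-parabolic approximation together with the parallelogram identity (Proposition~\ref{changevar}) and a nontrivial auxiliary three-variable minimization (Proposition~\ref{3var}). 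There is also a regularity issue upstream of your Euler--Lagrange step: for $u+t\varphi$ to remain $q$-concave you need $v''\le 0$ with slack on $\mathrm{supp}\,\varphi$, but a priori $v''$ is only a nonpositive measure with possibly no absolutely continuous part, so the EL derivation is not available on a generic sub-interval where $v$ happens to be strictly concave.

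You single out symmetry as the main obstacle, but once the structure has been established the paper gets $a=-b$ almost for free, via the convexity of $\sigma\mapsto\sigma^3(M^2+\sigma^2)^{-2}$ on $[0,M]$ applied to the two affine legs (Proposition~\ref{firstreduction}). The genuine difficulty is the structure reduction that your sketch skips, not the symmetry.
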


 \begin{figure}[ht]
 \centering
 \begin{tabular}{r}
 \includegraphics[width=7 cm]{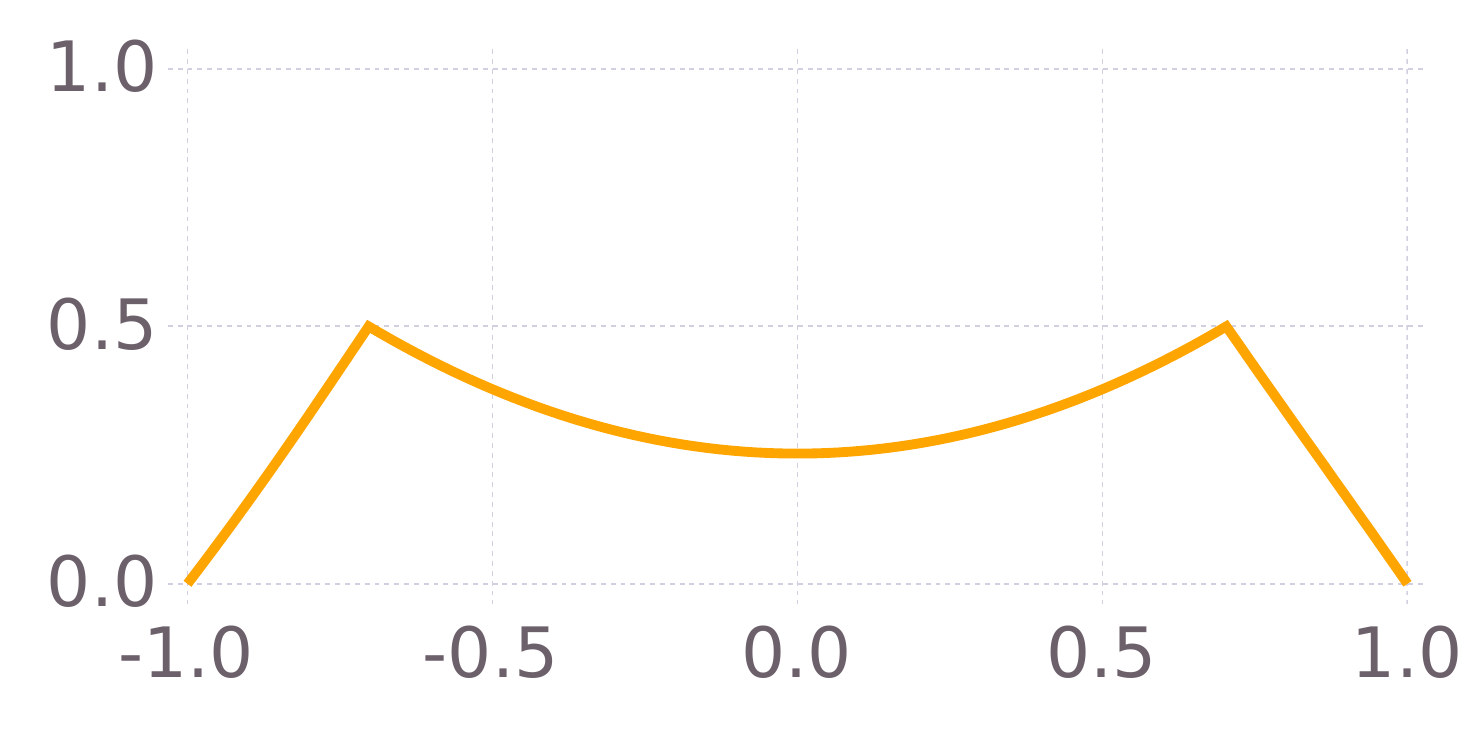}
 \end{tabular}
 \caption{Numerical solution of problem \eqref{qconcprob} for  $M=0.5$ and $q=1$.}
 \label{fig:f0}
 \end{figure}

Theorem \ref{main1d} shows that a solution of problem \eqref{qconcprob}  is given by a piecewise linear and parabolic function (see also the result of a numerical simulation in Figure 1).
Notice that the high profile assumption $2M\ge  q$  ensures that   $u_{M;q}$ fits the interval $[0,M]$ and is therefore admissible for problem \eqref{qconcprob}. The parabolic profile in the center has second derivative equal to $q$. A first understanding of this fact comes from the following straightforward first variation argument.

\medskip
\textbf{Proposition.}
\emph{Let $u$ be a solution to problem \eqref{qconcprob} and suppose that $u\in C^2(I)$ for some open interval $I\subset [-1,1]$. Moreover, suppose that $0<u<M$ in $I$. Then either $u''\equiv0$ or $u''\equiv q$ in $I$}.
\medskip

Indeed,
by $q$-concavity we have $u''\le q$ in $I$.
 Suppose that $u''$ is not identically equal to $q$ in $I$, so that there exists  an open interval $J\subset I$ such that $u''<q$ in $J$. Then, if   $\varphi\in C^\infty_c(J)$ and $|t|$ is small enough, $u+t\varphi$ is still $q$-concave with $0<u+t\varphi<M$ (it is an admissible competitor).
We have by dominated convergence
\[\begin{aligned}
\frac{d}{dt}E(u+t\varphi)&=-2\int_J \frac{\varphi'(x)(u'(x)+t\varphi'(x))}{(1+(u'(x)+t\varphi'(x))^2)^2}\,dx.
\end{aligned}\]
By minimality of $u$ we obtain that for any $\varphi\in C^\infty_c(J)$ there holds
\[
-2\int_J \frac{\varphi'u'}{(1+(u')^2)^2}=2\int_J \frac{u''(1-(u')^2)}{(1+(u')^2)^2}\,\varphi=0,
\]
so that we obtain the standard Euler-Lagrange equation for the Newton functional in one dimension
\[
\frac{u'}{(1+u'^2)^2}=\mathrm{const},
\]
yielding that $u''\equiv 0$ in $J$ and then in $I$.

\subsection*{Radial two-dimensional case}
In this case we let  $\Omega=B_{R}(0)$ be the open ball in $\mathbb{R}^2$, with center $0$ and radius $R>0$, and we consider the class of $q$-concave radial functions.
If we set $M>0$, $q\ge 0$ and
\[\mathcal{R}_{R;M;q}:=\left\{u\colon [0,R]\to [0,M]\left| r\mapsto u(r)-\frac q2r^2 \text{ is nonincreasing  and concave}\right. \right\},\]
then for every $u\in \mathcal{R}_{R;M;q}$ (which  is the radial profile  of a radial function that we still denote by $u$) the resistance functional is 
\begin{equation*}\label{newtonrad}
D_{{B_{R}(0)}}(u)=\mathscr{D}_R(u):=\int\limits_0^R\dfrac{r\,dr}{1+u'(r)^2}.\end{equation*}
Therefore, given $M>0$, $R>0$ and $q\ge 0$,  we have to solve the problem
\begin{equation}\label{radialproblem}
\min\left\{\mathscr{D}_R(u)\colon u\in \mathcal{R}_{R;M;q}\right\},
\end{equation}
still with the high profile assumption $2M\ge qR^2$ and  the single shock assumption $0\le qR\le 1$.
Existence of minimizers  is again standard, see    the Appendix.
Our second  main result is the  characterization of the solution to problem \eqref{radialproblem}.
It is given by a parabolic profile in $[0,a]$, and a strictly decreasing profile satisfying the radial two-dimensional Euler-Lagrange equation
\begin{equation*}\label{eulerad}\dfrac{-ru'(r)}{(1+u'(r)^{2})^{2}}=\mathrm{const}\end{equation*}
in $(a,R]$. The optimal value of $a$ is uniquely determined in $(0,R)$.
In order to write down the solution, which is a little less explicit,  we need to introduce some notation.

We let $(-\infty,-1]\ni t\mapsto h(t):=-t(1+t^{2})^{-2}$.  For $a\in(0,R)$, let $\varphi(a):=-\int _{a}^{R}h^{-1}(\tfrac{a}{4r})\,dr$ and
\[
 \gamma_{q}(a):=\sqrt{\dfrac{1}{2}\big( 3a^{2}q^{2}+1+\sqrt{9a^{4}q^{4}+10a^{2}q^{2}+1}\big)},
\quad \zeta_{q}(a):=-\int_{a}^{R}h^{-1}\left(\frac{a h(-\gamma_{q}(a))} {r}\right)\,dr.
\]


\begin{theorem}\label{radth} Let $R>0$, $M>0$.
Assume that $0\le qR\le1$ and $2M\ge qR^2$.
Then there exists a unique $a_M\in (0,R)$ such that $\varphi(a_M)=M$, and  there exists  a unique $a_{*}\in [a_{M},R)$ such that $\zeta_q(a_*)=M$.
Moreover,  there exists a unique solution to problem \eqref{radialproblem}, given by
  \begin{equation*} u(r):= \left\{
\begin{array}{lcl}
\dfrac {q}{2}(r^{2}-a_{*}^{2})+M&\text{if}&r\in [0, a_{*}]\\
&\\
\displaystyle-\int_{r}^{R}h^{-1}	\left(\frac{a_{*}h(-\gamma_{q}(a_{*}))}{s}\right)\,ds &\text{if}&r\in (a_{*},R].
\end{array}
\right.\end{equation*}
\end{theorem}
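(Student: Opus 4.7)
The strategy parallels the one-dimensional proof of Theorem \ref{main1d}: I would first establish that any minimizer must consist of a parabolic top on $[0,a]$ saturating $q$-concavity, joined to an Euler--Lagrange piece on $[a,R]$, for some matching radius $a \in (0,R)$; then I would derive two transversality conditions that pin down the optimal slope $u'(a^+)=-\gamma_q(a)$ at the junction and the optimal matching radius $a=a_*$. Uniqueness of the minimizer will follow from the uniqueness of $a_*$ and the fact that every minimizer must have this structure.

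For the structural result, the radial analogue of the one-dimensional Proposition is obtained by the same first-variation argument: on any open subinterval of $(0,R)$ where $u$ is smooth, $0<u<M$, and $u-\tfrac q2 r^2$ is strictly concave, the radial Euler--Lagrange equation $-ru'/(1+u'^2)^2=\mathrm{const}$ holds; otherwise $q$-concavity is saturated and $u''\equiv q$. Together with the classical truncation argument yielding $u(R)=0$ and the fact that $v:=u-\tfrac q2 r^2$ is concave and nonincreasing (so maximal at $r=0$), one argues that there is a single parabolic region of the form $u(r)=\tfrac q2(r^2-a^2)+u(a)$ on an initial interval $[0,a]$, followed by a single Euler--Lagrange piece on $[a,R]$. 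A short direct calculation shows $u(a)=M$: fixing $a$ and the EL constant $c$ so that $u(R)=0$, decreasing $c$ raises $u(a)$ and simultaneously decreases the outer resistance, so the height constraint at the junction must be active. Finally one has $u'(a^+)\le -1$: on the branch $|u'|\ge 1$ the Newton Lagrangian is convex in $u'$ and the EL extremal is unique, while a profile with $-1<u'(a^+)<0$ can be strictly improved by a standard Newton replacement argument.

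This reduces the problem to a two-parameter optimization in $(a,t_0)\in(0,R)\times(-\infty,-1]$ subject to the constraint $H(a,t_0):=-\int_a^R h^{-1}(ah(t_0)/s)\,ds=M$, which expresses $u(a)=M$ along the EL piece $u'(r)=h^{-1}(ah(t_0)/r)$. The total resistance is
\[
F(a,t_0)=\int_0^a\frac{r}{1+q^2r^2}\,dr+\int_a^R\frac{r}{1+h^{-1}(ah(t_0)/s)^2}\,ds,
\]
and I would write the Lagrange condition $\partial_a F\cdot\partial_{t_0}H=\partial_{t_0}F\cdot\partial_a H$. Using the conservation law $s\,h(\tau(s))=a\,h(t_0)$ along the EL curve (with $\tau(s):=u'(s)$) to reduce the integrals appearing in $\partial_{t_0}F$ and $\partial_{t_0}H$ to pointwise values at $s=a$, the transversality condition simplifies to the quartic
\[
\gamma^4-(3a^2q^2+1)\gamma^2-a^2q^2=0,\qquad \gamma:=-t_0\ge 1,
\]
whose unique admissible root is precisely $\gamma_q(a)$. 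Plugging $t_0=-\gamma_q(a)$ back into $H(a,t_0)=M$ gives the equation $\zeta_q(a)=M$ for $a_*$. When $q=0$ one has $\gamma_0\equiv 1$ so $\zeta_0=\varphi$ and $a_*=a_M$; for $q>0$ the inequalities $ah(-\gamma_q(a))<a/4$ and monotonicity of $h^{-1}$ give $\zeta_q(a_M)>\varphi(a_M)=M$ while $\zeta_q(R)=0$, so by continuity $a_*\in(a_M,R)$ exists; uniqueness then follows from strict monotonicity of $\zeta_q$ on $[a_M,R]$, which I would verify by differentiating along the EL profile. Admissibility of the explicit minimizer ($u(0)=M-\tfrac q2 a_*^2\ge 0$) uses the high-profile assumption $2M\ge qR^2$.

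The delicate point and main obstacle I expect is the algebraic reduction of the transversality condition to the explicit quartic giving $\gamma_q(a)$: the Lagrange system couples the localized parabolic contribution $a/(1+q^2a^2)$ with two integrals along the EL curve, and the simplification requires systematic use of the conservation law $s\,h(\tau)=\mathrm{const}$ to convert those integrals into boundary values at the junction $s=a$. Once $\gamma_q$ is identified, the remaining steps (monotonicity of $\zeta_q$, admissibility, and uniqueness of the minimizer) closely mirror the one-dimensional argument.
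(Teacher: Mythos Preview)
Your overall strategy and the algebraic reduction to the quartic $\gamma^4-(3a^2q^2+1)\gamma^2-a^2q^2=0$ are correct, and your two-parameter Lagrange setup in $(a,t_0)$ subject to $H(a,t_0)=M$ is equivalent to the paper's one-parameter minimization of $\mathcal{E}(a)$ with $t_0=h^{-1}(\eta(a)/a)$ implicitly defined via Proposition~\ref{technical}; the existence and uniqueness of $a_*$ via monotonicity of $\zeta_q$ is also handled in the same way (Proposition~\ref{crucialpt}).

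The real gap is the structural step. Your first-variation dichotomy (either the radial EL equation holds or $u''\equiv q$) is a purely local statement, valid only on open intervals where $u$ is smooth, $0<u<M$, and the $q$-concavity constraint is slack; it does not by itself exclude minimizers with several alternating parabolic and EL arcs, flat stretches at height $M$, corners, or intervals where $u$ fails to be strictly decreasing on the side. In the paper this global two-piece form is the content of Lemma~\ref{lastbutone}, which is the heart of Section~\ref{2dsec} and rests on three comparison results: the radial center lemma (Lemma~\ref{centerad}, proved via the radial parallelogram inequality of Proposition~\ref{inrad}) showing the parabola is the \emph{unique} optimum on $[0,a]$ among profiles with $u(r)\le u(a)$; a shifting lemma (Lemma~\ref{stessaquota}) that moves secondary humps leftward without increasing $\mathscr{D}_R$; and Lemma~\ref{marc}, which supplies both a concave minimizer among nonincreasing side profiles and the property $|u'|\notin(0,1)$. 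The last is exactly what you call the ``standard Newton replacement argument'', but it requires the convexified-integrand construction $\tilde f$, not merely a local variation. Without these comparison lemmas you cannot conclude that $u$ is strictly decreasing on $(a,R)$ with $u'\le -1$ a.e., and without that you cannot justify doing the first variation on $(a,R)$: the $q$-concavity constraint might be active and regularity is a priori only local Lipschitz. Note also that your argument for $u(a)=M$ (``decreasing $c$ raises $u(a)$ and decreases the outer resistance'') already presupposes the two-piece structure it is meant to help establish.
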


It is worth noticing that $\gamma_{0}(a)\equiv 1$, hence when $q=0$ we get $a_{*}=a_{M}$,
 and we recover the classical concave radial minimizer.
 
 Numerical solutions to problem \eqref{radialproblem}, in agreement with Theorem \ref{radth}, are shown in Figure 2. We refer to Section \ref{numericalsec} for numerical solutions obtained without radiality assumption.

   \begin{figure}[h]
 \centering
 \begin{tabular}{r r}
 \includegraphics[width=5 cm]{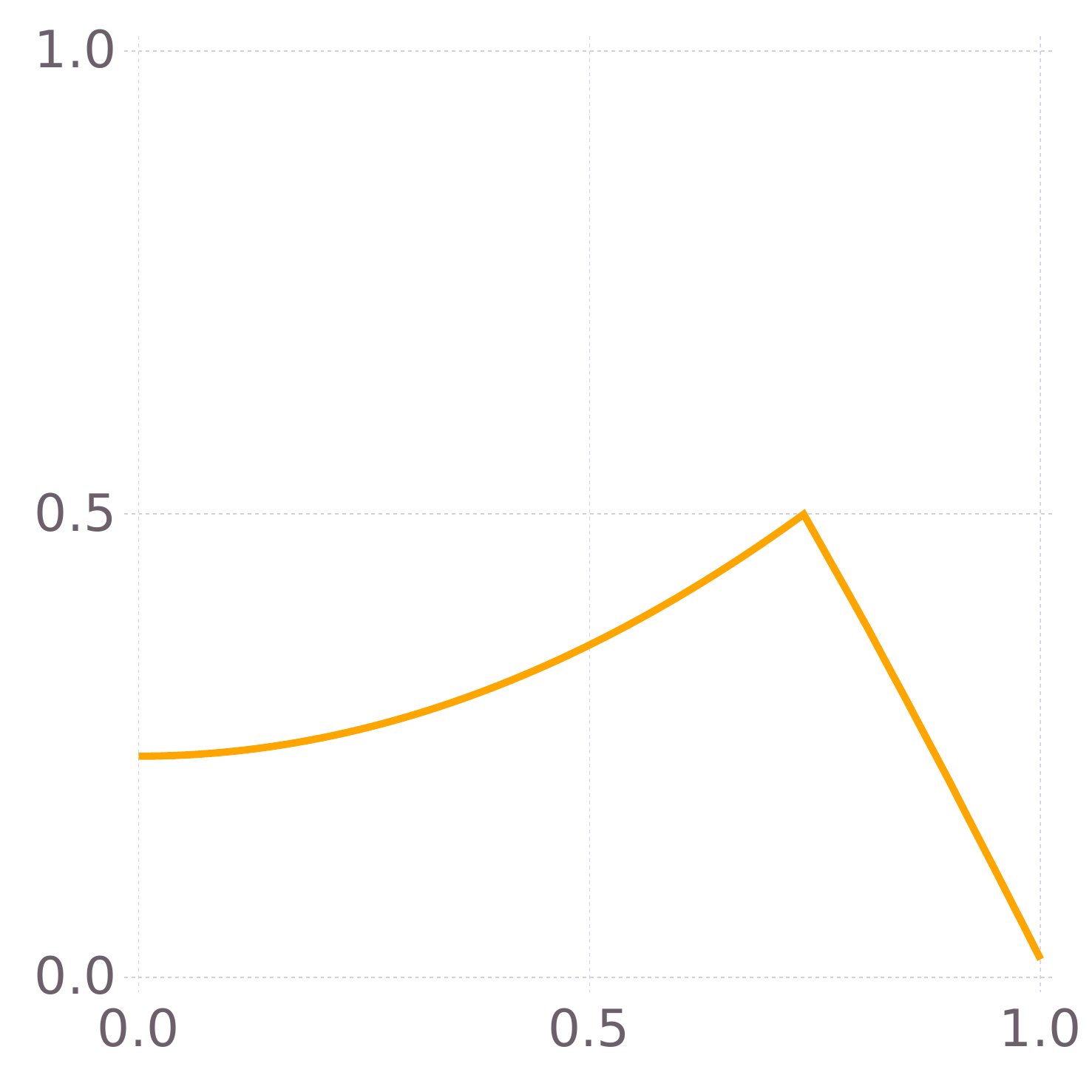}
 &\includegraphics[width=5 cm]{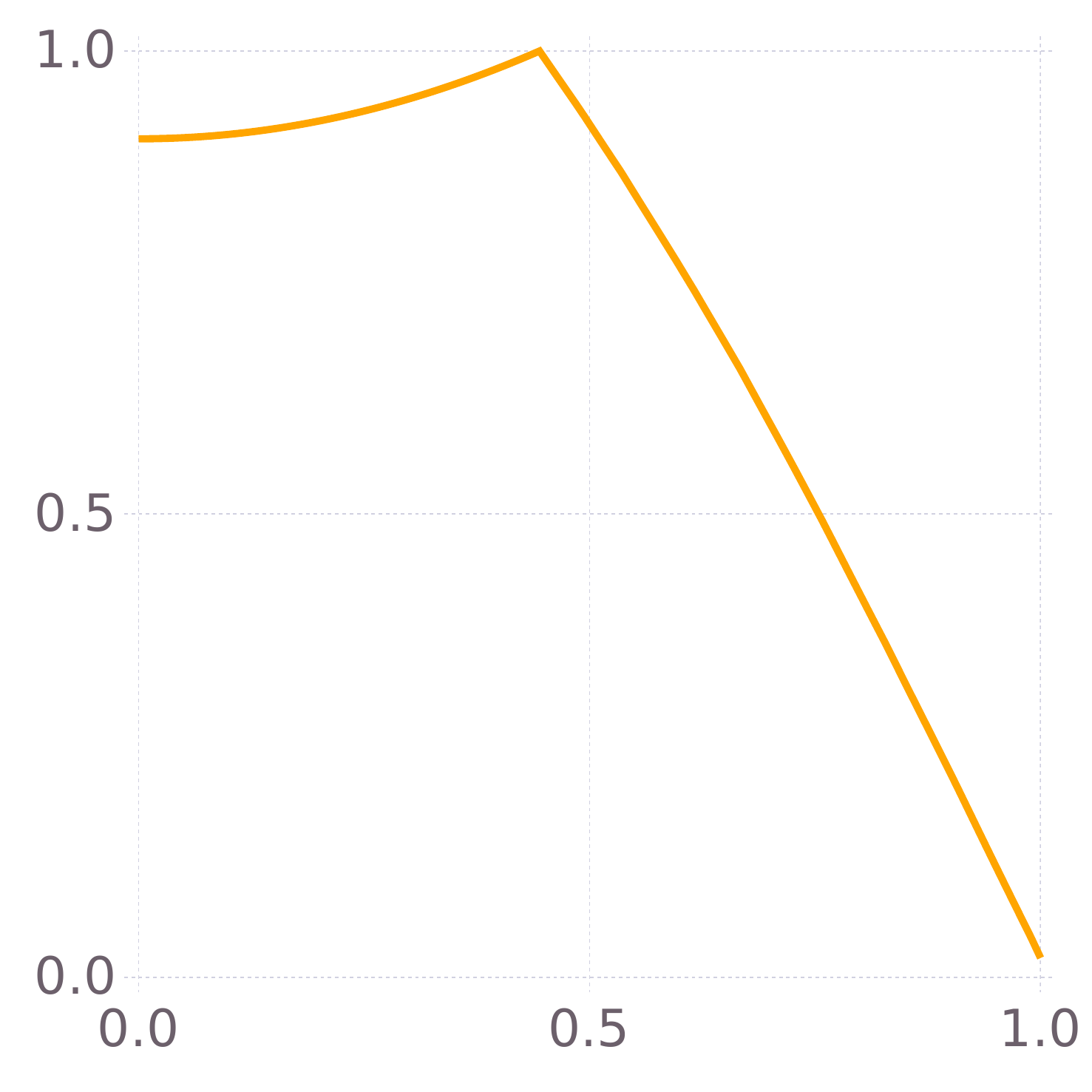}\\
 \includegraphics[width=5 cm]{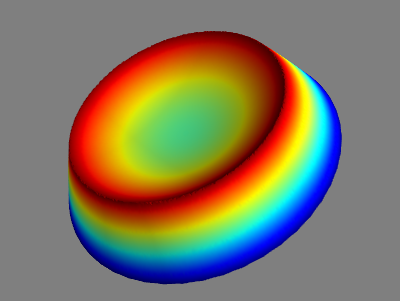}
 &\includegraphics[width=5 cm]{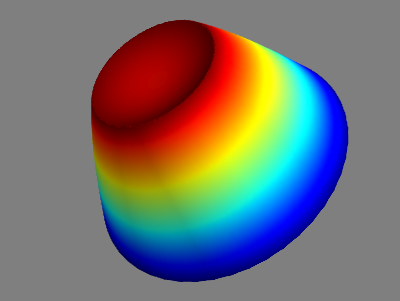}
 \end{tabular}
 \caption{Numerical solutions of problem \eqref{radialproblem} for $M=0.5$ and $M=1$, both  for $R=q=1$.}
 \label{fig:f1}
 \end{figure}



%

\section{Some preliminary results}\label{preliminarysec}

This section gathers some elementary results that will be useful in the sequel. We recall that, for $a<b$ and $q\ge 0$, $u:[a,b]\to\mathbb{R}$ is $q$-concave  if the map $[a,b]\ni x\mapsto u(x)-\tfrac q2 x^2$ is concave.

\EEE

\begin{definition}[\bf Piecewise parabolic approximation]\label{approx}
Let
$a<b$ and $q\ge0$. Let $u$ be a $q$-concave continuous function on $[a,b]$. 
Let $w\colon [a,b]\to \mathbb{R}$ be defined by
\[w(y):=u(y)-\dfrac q2(y-a)(y-b).\] 
For every $h\in\mathbb{N}$ and for every $j\in\left\{0,\ldots,h-1\right\}$ we consider  intervals defined by  $I_{j,h}:=\left[\alpha_{j,h},\beta_{j,h}\right)$
and  $\alpha_{j,h}:=a+j\frac{b-a}{h}$, $\beta_{j,h}:=a+(j+1)\frac{b-a}{h}$.
We let $w_h\colon [a,b]\to \mathbb{R}$ be given by
\begin{equation*}\label{wh}w_h(y):=\left\{\begin{array}{lcl}
\sum\limits_{j=0}^{h-1}\left[
w\left(\alpha_{j,h}\right)+\dfrac{h}{b-a}\left(w\left(\beta_{j,h}\right)-w\left(\alpha_{j,h}\right)\right)(y-\alpha_{j,h})
\right]\mathbbm{1}_{I_j}(y)&\text{if}&y\in[a,b)\\
w(b)&\text{if}&y=b.
\end{array}
\right.\end{equation*}
We define now the sequence of piecewise parabolic approximations  $u_h\colon [a,b]\to\mathbb{R}$ as
\[u_h(y):=w_h(y)+\dfrac q2 (y-a)(y-b),\qquad h\in\mathbb{N}.\]
\end{definition}

\begin{proposition}\label{Dconvergence} Let  $a<b$ and $q\ge 0$. Let $u$ be a $q$-concave continuous function on $[a,b]$.
Let $(u_h)_{h\in\mathbb{N}}$ be the sequence of piecewise parabolic approximations of $u$ given by {\rm Definition \ref{approx}}. Then $D_{(a,b)}(u_h)\to D_{(a,b)}(u)$ as $h\to\infty$.
\end{proposition}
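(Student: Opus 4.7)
The plan is to reduce the statement to a dominated convergence argument, after establishing pointwise a.e. convergence of the derivatives $u_h'$ to $u'$ on $(a,b)$.

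First I would reformulate the approximation in terms of the auxiliary function $w$. Since $u$ is $q$-concave, $y\mapsto u(y)-\tfrac{q}{2}y^2$ is concave on $[a,b]$, and therefore so is $w(y)=u(y)-\tfrac{q}{2}(y-a)(y-b)$, the two differing by an affine term. Moreover $w$ is continuous on $[a,b]$, and the definition of $w_h$ is precisely that of the piecewise linear interpolant of $w$ on the uniform grid $\{\alpha_{j,h}\}$ of mesh size $(b-a)/h$. Correspondingly $u_h=w_h+\tfrac{q}{2}(y-a)(y-b)$, so all the approximation takes place in the concave component.

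Next I would establish pointwise convergence $u_h'\to u'$ almost everywhere. As a concave function on $[a,b]$, $w$ is differentiable outside an at most countable set; fix such a $y\in (a,b)$. For each $h$ let $j=j(h)$ be the unique index with $y\in [\alpha_{j,h},\beta_{j,h})$; both endpoints are within $(b-a)/h$ of $y$, and on the open interval $w_h'$ is constant and equal to the difference quotient $\tfrac{w(\beta_{j,h})-w(\alpha_{j,h})}{\beta_{j,h}-\alpha_{j,h}}$. Splitting this ratio as
\[
\frac{w(\beta_{j,h})-w(y)}{\beta_{j,h}-\alpha_{j,h}}+\frac{w(y)-w(\alpha_{j,h})}{\beta_{j,h}-\alpha_{j,h}},
\]
using differentiability of $w$ at $y$ together with $|\beta_{j,h}-y|,|y-\alpha_{j,h}|\le \beta_{j,h}-\alpha_{j,h}$, one obtains $w_h'(y)\to w'(y)$ as $h\to\infty$. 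The countable set of grid points excluded over all $h$ is Lebesgue-null, so $u_h'(y)=w_h'(y)+q\bigl(y-\tfrac{a+b}{2}\bigr)\to w'(y)+q\bigl(y-\tfrac{a+b}{2}\bigr)=u'(y)$ for a.e. $y\in (a,b)$.

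Finally I would invoke dominated convergence: the integrand $(1+u_h'(y)^2)^{-1}$ is bounded pointwise by the constant $1$, which is integrable on the bounded interval $[a,b]$, and it converges a.e. to $(1+u'(y)^2)^{-1}$. Hence $D_{(a,b)}(u_h)\to D_{(a,b)}(u)$. The only nontrivial ingredient is the a.e. derivative convergence in the previous paragraph; although standard for concave functions, it is the step that requires a little care, because $w_h'$ is only a secant slope over $[\alpha_{j,h},\beta_{j,h}]$ and one genuinely uses the differentiability of $w$ at $y$ to close the argument.
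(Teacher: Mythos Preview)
Your proof is correct and follows essentially the same approach as the paper: establish a.e.\ convergence of $u_h'$ to $u'$ (away from grid nodes and at differentiability points of $u$), then apply dominated convergence with the trivial bound $1$. The paper's proof is only two sentences and simply asserts the a.e.\ derivative convergence; you have supplied the missing justification for that step via the secant-slope argument, which is exactly what is needed.
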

\begin{proof} \BBB
We have $u_h\to u$ uniformly on $[a,b]$ as $h\to\infty$. For any differentiability point $x$ of $u$  which for every $h\in\mathbb{N}$ is not a grid node (that is, for a.e. $x\in (a,b)$), there holds   $u'_h(x)\to u'(x)$. \EEE The result follows by dominated convergence.
\end{proof}

\begin{remark}\label{nonuniform} \rm It is clear that the approximation procedure of Definition \ref{approx} can be generalized to non uniform grids, still with $u_h$ equal to $u$ at grid nodes. Then, uniform convergence, a.e. convergence of derivatives   and the result of  Proposition \ref{Dconvergence} still hold as soon as the maximal size of the grid steps vanishes. In such case, it is possible to let an arbitrarily chosen point in $(a,b)$ be a grid node for any $h$. It is also possible to fix the value of the (right or left) derivative of the approximating sequence at some point. For instance, one may require   $(u'_h)_+(x_0)=u'_+(x_0)$ for any $h$ at some $x_0\in (a,b)$. Indeed, by the monotonicity of $w'_+$,
it is possible to find a sequence of intervals $[x_h,x^h)\ni x_0$, $h\in\mathbb{N}$, such that $x_h\uparrow x_0$ and $x^h\downarrow x_0$ monotonically as $h\to\infty$, and such that $(w(x^h)-w(x_h))/(x^h-x_h)=w'_+(x_0)$ for any $h$. Then, by choosing $x_h,x^h$ to be subsequent grid nodes for the piecewise linear approximation $w_h$ of $w$, the requirement is fulfilled.
\end{remark}

\begin{proposition}[\bf Parallelogram rule]\label{changevar}Let  $\gamma\le\delta$ and  $c\ge 0$. Then
\[\int\limits_\gamma^\delta\dfrac{dx}{1+c(x-\gamma)^2}=\int\limits_\gamma^\delta\dfrac{dx}{1+c(x-\delta)^2}.\]
\end{proposition}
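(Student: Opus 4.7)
The statement is an elementary change-of-variables identity, so the plan is extremely short. My approach is to apply the reflection $y = \gamma + \delta - x$, which maps the interval $[\gamma,\delta]$ onto itself (reversing orientation) and exchanges the roles of the two endpoints.

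First I would observe that if $c=0$ both integrals equal $\delta-\gamma$, so we may assume $c>0$. Then I set $y:=\gamma+\delta-x$, so that $dy=-dx$, the limits $x=\gamma,\delta$ become $y=\delta,\gamma$, and crucially
\[
x-\gamma \;=\; \delta - y,\qquad\text{so that}\qquad (x-\gamma)^2 \;=\; (y-\delta)^2.
\]
Substituting and flipping the limits gives
\[
\int_{\gamma}^{\delta}\frac{dx}{1+c(x-\gamma)^2}
\;=\;\int_{\delta}^{\gamma}\frac{-\,dy}{1+c(y-\delta)^2}
\;=\;\int_{\gamma}^{\delta}\frac{dy}{1+c(y-\delta)^2},
\]
which is the claimed identity.

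There is no real obstacle; the only thing to be careful about is the sign arising from $dy=-dx$ and the consequent swap of the limits of integration, which cancel to recover the correct orientation. The geometric content is simply that the integrand on the right is the reflection about the midpoint $(\gamma+\delta)/2$ of the integrand on the left, and integrals over an interval are invariant under reflection about the midpoint.
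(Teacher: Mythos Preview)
Your proof is correct and uses exactly the same change of variable $x\mapsto \gamma+\delta-x$ as the paper's proof. The paper states this in a single line without writing out the details; your added observation that the case $c=0$ is trivial is harmless.
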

\begin{proof} The thesis follows by the change of variable $x\mapsto \gamma+\delta-x $.
\end{proof}

\begin{proposition}\label{paral}
Let  $a<b$ and  $q\ge 0$. Let $u$ be a $q$-concave function on $[a,b]$ such that $u(a)=u(b)\ge u(x)$ for every $x\in[a,b].$
Then \[\dfrac q2 (x-b)\le u'_+(x)\le u'_-(x)\le \dfrac q2(x-a)\] for every $x\in (a, b).$
\end{proposition}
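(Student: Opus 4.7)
My plan is to reduce to concavity by setting $w(x):=u(x)-\tfrac{q}{2}x^2$, which is concave on $[a,b]$ by the definition of $q$-concavity, so in particular $w'_+(x)$ and $w'_-(x)$ exist at every $x\in(a,b)$ and satisfy $w'_+(x)\le w'_-(x)$. Since $u'_\pm(x)=w'_\pm(x)+qx$, the middle inequality $u'_+(x)\le u'_-(x)$ is immediate, and everything reduces to proving the two outer inequalities, which I will rewrite in terms of $w$ as
\[
-\tfrac{q}{2}(x+b)\le w'_+(x)\qquad\text{and}\qquad w'_-(x)\le -\tfrac{q}{2}(x+a).
\]

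To get these, I would use the standard one-sided tangent inequalities for concave functions: for every $x\in(a,b)$, $w(y)\le w(x)+w'_+(x)(y-x)$ when $y>x$, and $w(y)\le w(x)+w'_-(x)(y-x)$ when $y<x$. Applying the first with $y=b$ and the second with $y=a$, and then converting back to $u$ via $w=u-\tfrac{q}{2}(\cdot)^2$, a direct algebraic simplification (using $\tfrac{q}{2}(b^2-x^2)-qx(b-x)=\tfrac{q}{2}(b-x)^2$, and analogously for the $a$ endpoint) yields
\[
u(b)-u(x)\le (b-x)\bigl[u'_+(x)+\tfrac{q}{2}(b-x)\bigr],\qquad u(a)-u(x)\le (a-x)\bigl[u'_-(x)+\tfrac{q}{2}(a-x)\bigr].
\]

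At this point I would invoke the assumption $u(a)=u(b)\ge u(x)$: both left-hand sides are nonnegative. Dividing the first inequality by $b-x>0$ gives $u'_+(x)+\tfrac{q}{2}(b-x)\ge 0$, i.e.\ $u'_+(x)\ge \tfrac{q}{2}(x-b)$; dividing the second by $a-x<0$ reverses the sign and gives $u'_-(x)+\tfrac{q}{2}(a-x)\le 0$, i.e.\ $u'_-(x)\le \tfrac{q}{2}(x-a)$. Combined with the middle inequality, this is the desired chain.

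There is no real obstacle here: the only point where one must be slightly careful is keeping track of the sign of $a-x$ when dividing, and verifying the algebraic identity $\tfrac{q}{2}(b+x)-qx=\tfrac{q}{2}(b-x)$ (and analogously at $a$) that turns the tangent bound for $w$ into a clean bound involving $u'_\pm(x)$. Everything else is a direct application of the definition of $q$-concavity and the boundary/maximum hypothesis.
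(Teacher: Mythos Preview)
Your proof is correct and is essentially the same argument as the paper's: the paper writes the $q$-concave tangent inequalities $u(y)\le u(x)+u'_\pm(x)(y-x)+\tfrac q2(y-x)^2$ directly, evaluates them at $y=b$ and $y=a$, and uses $u(a)=u(b)\ge u(x)$, which is exactly what you obtain after introducing $w=u-\tfrac q2(\cdot)^2$ and translating back. The only difference is cosmetic---you pass explicitly through the concave function $w$, while the paper states the equivalent inequality for $u$ in one line.
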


\begin{proof} Let $x\in (a,b)$ be fixed. Then, by $q$-concavity of $u$ on $[a,b]$, we have that both $u'_+(x)$ and $u'_-(x)$ exist and the following hold
\begin{gather}\label{dx}
u(y)\le u(x)+u'_+(x)(y-x)+\dfrac q2(y-x)^2\text{ for every }y\in [x,b],\\\label{sx}u(z)\le u(x)+u'_-(x)(z-x)+\dfrac q2(z-x)^2\text{ for every }z\in [a,x].\end{gather}
Writing \eqref{dx} for $y=b$ and \eqref{sx} for $z=a$, taking into account that $u(a)=u(b)\ge u(x)$, we get
\[u'_+(x)\ge\dfrac q2(x-b)\quad\text{ and }\quad u'_-(x)\le \dfrac q2(x-a).\]
Moreover, since $x\mapsto u(x)-\tfrac q2x^2$ is a concave function on $[a,b]$, then $u'_-(x)\ge u_+'(x)$ for every $x\in(a,b)$ thus concluding the proof.
\end{proof}

We conclude this preliminary section with the following computation.

\begin{proposition}\label{3var}
Let $\lambda\ge 0$, $F_{\lambda}\colon \mathbb{R}^3\to \mathbb{R}$ be the function defined by
\begin{equation}\label{efflambda}F_{\lambda}(x,y,z):=\arctan x +\arctan y + \arctan z -\arctan\lambda +\arctan(\lambda-x)-\arctan(y+z)\end{equation} and let
\begin{equation}\label{deltalambda}\Delta_{\lambda}:=\left\{(x,y,z)\in\mathbb{R}^3\colon -y\le x\le\lambda,\,-\lambda\le 2y\le 0,\,x-\lambda\le z\le 0\right\}\subseteq\mathbb{R}^3.\end{equation} Then $$\min\limits_{\Delta_{\lambda}}F_{\lambda}= 0. $$
The minimal value is attained if and only if one of the following three cases occurs: $$ \text{i) }\: x=\lambda, z=0, y\in \left[-\tfrac\lambda2,0\right],\quad \text{ii) }\:  x=-y, z=-y-\lambda, y\in \left[-\tfrac{\lambda}2,0\right],\quad \text{iii) }\: x=y=0, z\in[-\lambda,0].$$
\end{proposition}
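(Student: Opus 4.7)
The key move I would make is the substitution $x':=\lambda-x$, $y':=-y$, $z':=-z$, which maps $\Delta_\lambda$ bijectively onto
\[
\Delta_\lambda'=\{(x',y',z')\in\mathbb{R}^3:\ 0\le x'\le\lambda,\ 0\le y'\le\lambda/2,\ 0\le z'\le x',\ x'+y'\le\lambda\},
\]
and, using $\arctan(-t)=-\arctan t$, transforms the objective into the clean difference
\[
F_\lambda(x,y,z)=A(x')-B(y',z'),
\]
where $A(u):=\arctan u+\arctan(\lambda-u)-\arctan\lambda$ and $B(v,w):=\arctan v+\arctan w-\arctan(v+w)$. Finding this substitution is the only nontrivial conceptual step; everything below is monotonicity bookkeeping.

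Next I would record two elementary facts. On $[0,\lambda]$ the function $A$ is a sum of two concave functions vanishing at $u=0$ and $u=\lambda$, so $A\ge 0$ with equality only at the endpoints. On $\mathbb{R}_+^2$ one has $\partial_v B=\frac{1}{1+v^2}-\frac{1}{1+(v+w)^2}\ge 0$, with strict inequality when $w>0$, and symmetrically for $\partial_w B$; since $B(0,w)=B(v,0)=0$, this gives $B\ge 0$ with equality iff $vw=0$, together with strict monotonicity of $B$ in each variable whenever the other is positive.

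The inequality $F_\lambda\ge 0$ then falls out immediately. For any $(x',y',z')\in\Delta_\lambda'$ the domain constraints imply $y'\le\lambda-x'$ and $z'\le x'$, so monotonicity of $B$ gives
\[
B(y',z')\le B(\lambda-x',x')=\arctan(\lambda-x')+\arctan x'-\arctan\lambda=A(x'),
\]
hence $F_\lambda=A(x')-B(y',z')\ge 0$. Since equality is achieved at, say, $(x,y,z)=(\lambda,0,0)$, the minimum value is $0$.

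The equality characterization is a case split on $A(x')$. When $A(x')>0$, i.e.\ $x'\in(0,\lambda)$, the strict monotonicity of $B$ forces the corner $(y',z')=(\lambda-x',x')$; the remaining constraint $y'\le\lambda/2$ then says $x'\ge\lambda/2$, and reverting the substitution yields case (ii), $x=-y$, $z=-y-\lambda$ with $y\in[-\lambda/2,0]$. When $A(x')=0$, i.e.\ $x'\in\{0,\lambda\}$, equality also forces $B(y',z')=0$, i.e.\ $y'z'=0$; the choice $x'=0$ makes $z'\le x'=0$ force $z'=0$ while $y'\in[0,\lambda/2]$ is free (this is case (i)), and the choice $x'=\lambda$ makes $y'\le\lambda-x'=0$ force $y'=0$ while $z'\in[0,\lambda]$ is free (this is case (iii)). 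The main obstacle is indeed just spotting the substitution that uncouples the objective into $A-B$; the rest of the argument — monotonicity, a one-line inequality, a two-case equality analysis — writes itself.
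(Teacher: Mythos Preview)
Your proof is correct and takes a genuinely different route from the paper's.

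The paper proceeds by a Lagrange-multiplier/boundary-reduction argument: it first shows that any minimizer of $F_\lambda$ on $\Delta_\lambda$ must satisfy $x=-y$ or $x=z+\lambda$ (by analyzing $\partial_1 F_\lambda$ in the strip $-y<x<z+\lambda$, forcing $x=\lambda/2$ and then deriving a contradiction), then reduces to minimizing the two two-variable functions $\varphi_\lambda(y,z)=F_\lambda(z+\lambda,y,z)$ and $\psi_\lambda(y,z)=F_\lambda(-y,y,z)$ over a triangular region $\Sigma_\lambda$. It then checks that neither has interior critical points and analyzes all boundary edges of $\Sigma_\lambda$ one by one, tracking monotonicity on each edge. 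This works but is lengthy and somewhat opaque.

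Your substitution $(x',y',z')=(\lambda-x,-y,-z)$ uncouples the objective into $F_\lambda=A(x')-B(y',z')$ with $A(u)=\arctan u+\arctan(\lambda-u)-\arctan\lambda$ and $B(v,w)=\arctan v+\arctan w-\arctan(v+w)$, after which the domain constraints $y'\le\lambda-x'$, $z'\le x'$ and the monotonicity of $B$ immediately give $B(y',z')\le B(\lambda-x',x')=A(x')$, hence $F_\lambda\ge0$. The equality analysis then follows in two short cases. This is considerably more transparent: the key inequality is one line, the structural reason ($A$ concave with endpoint zeros, $B$ monotone with axis zeros) is visible, and there is no case-by-case edge analysis. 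The paper's approach, on the other hand, does not require spotting the decoupling substitution and works by systematic elimination, which is perhaps easier to discover but harder to read. Your argument is the cleaner of the two.

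One minor wording point: when you say ``$A$ is a sum of two concave functions vanishing at $u=0$ and $u=\lambda$'', it is $A$ itself (the sum), not the individual summands, that vanishes at both endpoints; you clearly mean this, and the conclusion $A\ge0$ with equality only at $0,\lambda$ is correct.
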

\begin{proof} If $\lambda=0$ the result is trivial. Let us assume that $\lambda>0$.

We first claim that if $(\overline{x},\overline{y},\overline{z})$ minimizes $F_\lambda$ on $\Delta_\lambda$, then  $\overline{x}=-\overline{y}$ or $\overline{x}=\overline{z}+\lambda.$
Indeed, if $(\overline{x},\overline{y},\overline{z})$ is a minimum point for $F_\lambda$ on $\Delta_\lambda$ satisfying
\begin{equation}\label{strict}
-\overline{y}<\overline{x}<\overline{z}+\lambda,
\end{equation}then it is seen from \eqref{deltalambda} that there exists $\delta>0$ such that $ [\overline{x}-\delta,\overline{x}+\delta]\times\{\overline{y}\}\times\{\overline{z}\}\subseteq\Delta_\lambda$
and \[0=\partial_1 F_\lambda(\overline{x},\overline{y},\overline{z})=\dfrac{\lambda(\lambda-2\overline{x})}{\left(1+\overline{x}^2\right)\left(1+(\lambda-\overline{x})^2\right)},\]
that is $\overline{x}=\tfrac\lambda 2$. Then, from \eqref{deltalambda} and\eqref{strict}  we have
\[-\dfrac\lambda 2<\overline{y}\le 0\text{ and }-\frac\lambda2<\overline{z}\le 0.\]
 If $\overline{x}=\frac\lambda2$, $-\tfrac\lambda 2<\overline{y}< 0\text{ and }-\frac\lambda2<\overline{z}< 0$, then we see from \eqref{deltalambda} and \eqref{strict} that the point  $(\overline{x},\overline{y},\overline{z})$ is in the interior of $\Delta_\lambda$ and therefore
\[\partial_2 F_\lambda\left(\overline{x},\overline{y},\overline{z}\right)=\partial_3 F_\lambda\left(\overline{x},\overline{y},\overline{z}\right)=0,\]
but this is an absurd because the latter equalities hold true  only if $\overline{y}=\overline{z}=0$. Then we are left to consider the case $\overline{x}=\frac\lambda2$,
  $\overline{y}=0$, $-\frac\lambda2<\overline{z}\le 0$ and the case   $\overline{x}=\frac\lambda2$, $-\tfrac\lambda 2<\overline{y}\le 0$, $\overline{z}=0$.
However, in both cases we obtain
\[F_\lambda(\overline{x},\overline{y},\overline{z})=2\arctan\dfrac\lambda 2-\arctan\lambda>0=F_\lambda(0,0,0)\]
and this  contradicts the minimality of $(\overline{x},\overline{y},\overline{z})$, since $(0,0,0)\in \Delta_\lambda.$ The proof of the claim is done,
that is,
 there holds
 $\overline{x}=\overline{z}+\lambda$ or $\overline{x}=-\overline{y}$.

  In order to conclude it suffices to minimize the functions $\varphi_\lambda,\psi_\lambda\colon \mathbb{R}^2\to \mathbb{R}$, defined by
\[\varphi_{\lambda}(y,z):=F(z+\lambda,y,z)=\arctan(z+\lambda)+\arctan y-\arctan \lambda-\arctan(y+z),\]
\[\psi_{\lambda}(y,z):=F(-y,y,z)=\arctan z-\arctan\lambda+\arctan(\lambda+y)-\arctan(y+z),\]
 on the set
\[\Sigma_\lambda:=\left\{(y,z)\in\mathbb{R}^2\colon y\in \left[-\tfrac\lambda 2,0\right],z\in [-\lambda-y,0]\right\}.\]
It is easily seen than both $\varphi_\lambda$ and $\psi_\lambda$ have no critical points in the interior of ${\Sigma_\lambda}$.
Let us check their behavior on the boundary of $\Sigma_\lambda$.

There holds
\begin{equation}\label{equa1}\varphi_\lambda(y,0)=0=\varphi_\lambda(y,-y-\lambda)\quad \mbox{for every $y\in\left[-\tfrac{\lambda}2,0\right]$}.\end{equation}
The restrictions of $\varphi_\lambda$ on the other two edges of the boundary of $\Sigma_\lambda$ are
\[\tilde{\varphi}_\lambda(z):=\varphi_\lambda\left(-\tfrac\lambda 2,z\right)=\arctan(z+\lambda)-\arctan\tfrac\lambda2-\arctan\lambda-\arctan\left(z-\tfrac\lambda2\right),\quad z\in \left[-\tfrac{\lambda}2,0\right]
\]
and
\[\overline{\varphi}_\lambda (z):=\varphi_\lambda(0,z)=\arctan(z+\lambda)-\arctan\lambda-\arctan z,\quad z\in[-\lambda,0].
\]
Then we can see that $\tilde{\varphi}_\lambda$ is strictly increasing in $\left[-\tfrac\lambda 2,-\tfrac\lambda4\right]$ and strictly decreasing in $\left[-\tfrac\lambda4,0\right]$ while $\overline{\varphi}_\lambda$ is strictly  increasing in $[-\lambda,-\tfrac\lambda2]$ and strictly decreasing in $\left[-\tfrac\lambda2,0\right]$. This yields  $\tilde\varphi_\lambda\ge 0$ on $[-\frac\lambda2,0]$ with equality only at $-\frac\lambda2$ and $0$, and   $\overline{\varphi}_\lambda>0$ on $[-\lambda, 0]$ with equality only at $-\lambda$ and $0$. Therefore, $\varphi_\lambda\ge 0$ on $\Sigma_\lambda$, the only equality cases being described by \eqref{equa1}.

 Similarly, $\psi_\lambda(0,z)=0$ for every $z\in[-\lambda,0]$ and $\psi_\lambda(y,-y-\lambda)=0$ for every $y\in \left[-\tfrac{\lambda}2,0\right]$, and moreover  $\psi_\lambda>0$ on the rest of the boundary of $\Sigma_\lambda$. Indeed, after setting
\[\tilde{\psi}_\lambda(z):=\psi_\lambda\left(-\tfrac\lambda 2,z\right)=\arctan z-\arctan\lambda+\arctan\tfrac\lambda2-\arctan\left(z-\tfrac\lambda2\right),\quad z\in\left[-\tfrac\lambda2,0\right]\]
and
\[\overline{\psi}_\lambda(y):=\psi_\lambda(y,0)=-\arctan\lambda+\arctan(\lambda+y)-\arctan y,\quad y\in\left[-\tfrac\lambda2,0\right]
\]
 it is easily seen that $\tilde{\psi}_\lambda$ is strictly increasing in $\left[-\tfrac\lambda2,0\right]$ and $\overline{\psi}_\lambda$ is strictly decreasing on the same interval. The proof is concluded.
\end{proof}

\section{The one-dimensional case}\label{1dsec}

In the following we will make use of the notation
\begin{equation*}\label{pnotation}\wp_{a;b}^K(y):=\dfrac q2 (y-a)(y-b)+K,\quad\text{  }y\in[a,b].\end{equation*}
The proof of  Theorem \ref{main1d} is essentially based on the following Lemma \ref{struccenter} and Lemma \ref{strucside}. The first identifies  the parabolic profile as optimal in the center. The latter identifies a linear profile on the  side.

\begin{lemma}[\bf The center]\label{struccenter}
Let  $a<b$, $q\ge 0$, and let $u$ be a $q$-concave function on $[a,b]$ such that
$u(a)=u(b)\ge u(x)$ for every $x\in[a,b].$
Then \begin{equation*}\label{th}
D_{(a,b)}(u)\ge D_{(a,b)}\left(\wp_{a;b}^{u(a)}\right)\end{equation*}
and equality holds if and only if $u\equiv \wp_{a;b}^{u(a)}$.
\end{lemma}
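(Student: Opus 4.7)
The plan is to reduce to piecewise parabolic functions via the approximation of Definition \ref{approx} and then establish the inequality by iteratively merging adjacent pieces, invoking Proposition \ref{3var} at each merging step.

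Set $K := u(a) = u(b)$ and $\wp := \wp_{a;b}^{K}$. Let $(u_h)$ be the sequence from Definition \ref{approx}. The function $w = u - \tfrac{q}{2}(\cdot-a)(\cdot-b)$ is concave, so its piecewise linear interpolant $w_h$ satisfies $w_h \le w$, giving $u_h \le u \le K$; combined with $u_h(a) = u_h(b) = K$, this shows that $K$ is the maximum of $u_h$, attained precisely at the endpoints $\{a,b\}$. Also $u_h - \tfrac{q}{2}(\cdot)^2 = w_h + \text{(affine)}$ is concave, so $u_h$ is $q$-concave. Thus each $u_h$ satisfies the hypotheses of the lemma, and by Proposition \ref{Dconvergence} we have $D_{(a,b)}(u_h) \to D_{(a,b)}(u)$; it therefore suffices to prove $D_{(a,b)}(u_h) \ge D_{(a,b)}(\wp)$ for every $h$. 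On each parabolic piece $[\alpha_{j,h},\beta_{j,h}]$ the derivative $u_h'$ is affine with slope $q$, so the change of variable $t = u_h'(y)$ yields
\[
\int_{\alpha_{j,h}}^{\beta_{j,h}} \frac{dy}{1 + u_h'(y)^2} = \frac{1}{q}\bigl[\arctan u_h'(\beta_{j,h}^-) - \arctan u_h'(\alpha_{j,h}^+)\bigr],
\]
and similarly $D_{(a,b)}(\wp) = \tfrac{2}{q}\arctan\!\bigl(q(b-a)/2\bigr)$, so the inequality reduces to a comparison of arctan sums.

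The core step is to show that merging any two adjacent pieces of $u_h$ into the unique $q$-parabola passing through the outer endpoint values does not increase the resistance. Such a merge preserves $q$-concavity and the max-at-boundary property of the approximation: concavity of $w$ forces the combined parabola to stay below $u$ at the merged node and to have derivatives at the outer endpoints no less extreme than those of the original inner pieces, so all previously downward derivative jumps remain downward. The corresponding arctan inequality is precisely Proposition \ref{3var}: choosing $\lambda$ as $q$ times the combined length and $(x,y,z)$ to encode the derivative values at and around the merged node, the condition $F_\lambda(x,y,z) \ge 0$ matches the required comparison, while the constraints from Proposition \ref{paral} together with the jump condition at the node place $(x,y,z)$ in $\Delta_\lambda$. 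After at most $h-1$ merges, $u_h$ is reduced to a single $q$-parabola on $[a,b]$ matching $K$ at both endpoints, namely $\wp$; hence $D_{(a,b)}(u_h) \ge D_{(a,b)}(\wp)$, and passing to the limit gives $D_{(a,b)}(u) \ge D_{(a,b)}(\wp)$.

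The main obstacle is identifying the correct substitution $(x,y,z)$ in Proposition \ref{3var} realizing each merge and verifying that the arguments lie in $\Delta_\lambda$ throughout the iteration. For the equality statement, first note that $u \ge \wp$ pointwise: $u - \tfrac{q}{2}(\cdot)^2$ is concave and agrees with the affine function $\wp - \tfrac{q}{2}(\cdot)^2$ at $a$ and $b$. If $u \not\equiv \wp$, then $u > \wp$ strictly on some subinterval $J \subset (a,b)$; placing a grid node in $J$ via Remark \ref{nonuniform} produces approximations $u_h$ for which at least one merging step falls strictly outside the equality cases i)--iii) of Proposition \ref{3var}, yielding $D_{(a,b)}(u_h) \ge D_{(a,b)}(\wp) + \delta$ for a uniform $\delta > 0$; in the limit this contradicts $D_{(a,b)}(u) = D_{(a,b)}(\wp)$, so $u \equiv \wp$.
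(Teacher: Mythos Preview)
Your strategy---reduce to piecewise parabolic approximations and then iteratively merge adjacent pieces, invoking Proposition \ref{3var} at each merge---is genuinely different from the paper's, and the central step is not established. You assert that ``the corresponding arctan inequality is precisely Proposition \ref{3var}'' with $\lambda$ equal to $q$ times the local length, but you never specify the substitution $(x,y,z)$, and you yourself flag this as ``the main obstacle''. It is a real one. The six arctan arguments arising from a single merge on $[\alpha,\gamma]\cup[\gamma,\beta]$, namely $u'(\gamma^-),\,-u'(\alpha^+),\,u'(\beta^-),\,-u'(\gamma^+),\,-p'(\beta),\,p'(\alpha)$, do not in general admit the algebraic structure of the six arguments $x,y,z,-\lambda,\lambda-x,-(y+z)$ of $F_\lambda$ (two triples, each summing to zero). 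Moreover, Proposition \ref{paral} only bounds $u'$ in terms of the \emph{global} endpoints $a,b$; taking $\lambda=q(\beta-\alpha)$ you have no mechanism placing your parameters in $\Delta_\lambda$. Without these constraints the local merge inequality can fail: for instance with $q=1$ and two pieces having constant parts $c_1=-10$, $c_2=-11$ on unit intervals, passing to the single parabola through the outer values strictly \emph{increases} the resistance. Whether such a configuration can actually occur at some stage of your iteration under the global boundary condition is a delicate question you have not addressed.

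The paper's argument is not iterative. After normalising to $[-a,a]$ it introduces the global quantities $\zeta=\inf\{x:u'_+(x)\le qx\}$ and $\sigma=\min\{x\in[0,a):u'_+(x)=0\}$, splits $[-a,a]$ at $\zeta,0,\sigma$ and at the sign changes of $u'$ on $(0,a)$, and bounds each piece from below using Proposition \ref{changevar} and the sub-additivity of $\arctan$. Summing yields a \emph{single} inequality
\[
D_{(-a,a)}(u)\ \ge\ \int_{-a}^{a}\frac{dx}{1+q^2x^2}\ +\ \frac1q\,F_{qa}\bigl(q\sigma,\,u'_+(0),\,q\zeta\bigr),
\]
and Proposition \ref{3var} is applied exactly once, with $\lambda=qa$ the full half-length; the membership $(q\sigma,u'_+(0),q\zeta)\in\Delta_{qa}$ follows from Proposition \ref{paral} on the whole interval together with the zero-mean property $\int_{-a}^{a}u'=0$. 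The equality analysis is then obtained from the explicit description of $\{F_{qa}=0\}$ in Proposition \ref{3var}, combined with a tailored approximation via Remark \ref{nonuniform} fixing the one-sided derivatives at $\zeta,0,\sigma$; your claim of a uniform gap $\delta>0$ from ``one merging step outside the equality cases'' does not follow from what you have written.
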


\begin{proof}
If $q=0$ the result is trivial. Assume therefore that $q>0$.
Since  $v_h(x):=u(x-h)$ satisfies $D_{(a,b)}(u)=D_{(a+h,b+h)}(v_h)$ for any $h\in\mathbb{R}$ (translation invariance property),  we may also assume without loss of generality that the reference interval is of the form $[-a,a]$, $a>0$.

Notice that $u$ is absolutely continuous in $[-a,a]$ and that $u(a)=u(-a)$ entails
$\int_{-a}^{a}u'(x)\,dx=0$,
hence  the set $\{x\in(-a,a):u'_+(x)\le qx\}$ is nonempty and we  may define
\begin{equation}\label{zetagreca}\zeta:=\inf\{x\in(-a,a)\colon u'_+(x)\le qx\}.\end{equation}
Then we have $\zeta\in[-a,a)$, and moreover we may assume without loss of generality that $\zeta\le 0$ (indeed, if this is not the case we may consider $v(x):=u(-x)$, which still satisfies the assumptions, since it is clear that the corresponding value of $\zeta$ is in $[-a,0]$, and since $D_{-a}^a(v)=D_{-a}^a(u)$ obviously holds).
Notice also that $\zeta\le 0$ implies $u'_+(0)\le 0$, since $x\mapsto u'_+(x)-qx$ is nonincreasing.

The proof will be achieved in some steps.
We  first prove  that
\begin{equation}\label{tha}
D_{(-a,a)}(u)\ge D_{(-a,a)}\left(\wp_{-a;a}^{u(a)}\right)
\end{equation}
holds true
for $q$-concave functions $u$, satisfying $u(a)=u(-a)\ge u(x)$ for any $x\in [-a,a]$,   such that  $[-a,a]\ni x\mapsto u(x)-\tfrac q2(x^2-a^2)$ is piecewise linear. In such case $x\mapsto u'(x)-qx$ is a nonincreasing piecewise constant function on $(-a,a)$.
We will consider a general  $u$ only in the last step.

\textbf{Step 1}.
As previously observed, it is not restrictive to assume $\zeta\le0$.
 Let $A_1$ the (possibly empty) set defined by $A_1:=\{x\in(-a,\zeta)\colon  u_+'(x)> 0\},$
 and let $A_2=(-a,\zeta)\setminus A_1.$
  Since $u'$ is piecewise linear, $A_1$ is a finite disjoint union of open  intervals $(c_i,d_i)$, $i=1,\ldots,k$, and
\begin{equation}\label{A1}\begin{aligned}\int\limits_{A_1}\dfrac{dx}{1+u'(x)^2}&\ge\sum\limits_{i=1}^{k}
\int\limits_{c_i}^{d_i}\dfrac{dx}{1+q^2(x-c_i)^2}
\\&=\sum\limits_{i=1}^k
\int\limits_{c_i}^{d_i}\dfrac{dx}{1+q^2(x-d_i)^2}
\ge \sum\limits_{i=1}^k
\int\limits_{c_i}^{d_i}\dfrac{dx}{1+q^2x^2}
=\int\limits_{A_1}\dfrac{dx}{1+q^2x^2}.\end{aligned}
\end{equation}
Here, the first inequality holds true since $x\mapsto u_+'(x)-qx$ (equal to $u'(x)-qx$ a.e. on $(-a,a)$) is not increasing and since $u'_+(c_i)=0$, so that on $(c_i,d_i)$ we have $0<u'_+(x)\le q(x-c_i)$. The first equality follows by Proposition \ref{changevar} and the last inequality is satisfied since we  have $d_i\le 0$ and then $0< q(d_i-x)\le -qx$ on $(c_i,d_i)$, for every $i=1,\ldots,k$. On the other hand, it is clear that we have $0\ge u'_+(x)\ge qx$ on $A_2$
and together with \eqref{A1} this gives
\begin{equation}\label{minuszeta}
D_{(-a,\zeta)}(u)=\int_{A_1\cup A_2}\frac{dx}{1+u'(x)^2}\ge \int_{-a}^\zeta\frac{dx}{1+q^2x^2}.
\end{equation}

In a  similar way, since $u$ is $q$-concave on $[-a,a]$ and $\zeta \in[-a,0]$, we have that
$0\ge qx \ge u'_+(x)\ge u'_+(0)+qx$
for every $x\in(\zeta,0)$. As $u'=u'_+$ a.e. on $(-a,a)$, we get
\begin{equation}\label{zetazero}\begin{aligned}
D_{(\zeta,0)}(u)&\ge\int\limits_\zeta^0\dfrac{dx}{1+(u'_+(0)+qx)^2}=\dfrac1q\arctan(u'_+(0))-\dfrac1q\arctan(u'_+(0)+q\zeta)\\
&=\int\limits_\zeta^0\dfrac{dx}{1+q^2x^2}+\dfrac1q\arctan(u'_+(0))-\dfrac1q\arctan(u'_+(0)+q\zeta)
+\dfrac1q\arctan(q\zeta).
\end{aligned}\end{equation}

\textbf{Step 2}.
Let us now define
 \begin{equation}\label{S}\begin{aligned}
 \sigma:&=\min\{x\in [0,a)\colon u'_+(x)=0\},\\
 S:&=\{x\in (0,a)\colon u'_-(x)=u'_+(x)=0\}\cup \{x\in (0,a)\colon u'_{+}(x)<0<u'_{-}(x)\}.
 \end{aligned}
 \end{equation}
 By Proposition \ref{paral}, we have $\tfrac q2(x-a)\le u'(x)\le\tfrac q2 (x+a)$ at each point  where $u'$ exists. Since $u'(x)-qx$ is piecewise constant, it follows that $-qx>u'(x)-qx\ge 0$ on a right neighbor of $-a$  and $-qx<u'(x)-qx\le 0$ on a left neighbor of $a$. Moreover  $u'_+(0)\le 0$ follows from $\zeta\le 0$. Therefore,  $\sigma$ is well defined. We have $0<\sigma<a$  if $u'_+(0)<0$ (then $u'_+< 0$ on $(0,\sigma)$) and  $\sigma=0$ otherwise. In any case $u'_+(\sigma)=0$. On the other hand,
   $u'$ is piecewise linear, therefore $S$ is a (possibly empty) finite set,
 and sign change of $u'_+$ on $(0,a)$  occurs exactly at $\sigma$ if $\sigma>0$, and  on $S\setminus\{\sigma\}$, if nonempty. In case $S\setminus\{\sigma\}$ is nonempty,  we denote its elements by $0<\xi_1<\xi_2<\ldots< \xi_h$, and $h$ is even (this comes  from the fact that $u'>0$ in a left neighborhood of $a$). We also let $\xi_{h+1}=a$.  In each of the intervals $(\xi_{i},\xi_{i+1})$, $i=1,\ldots, h$, there holds either $u'_+\ge 0$ or $u'_+\le 0$.
Moreover we have that
\begin{equation}\label{xyz}D_{(\xi_i,\xi_{i+1})}(u)\ge \int\limits_{\xi_i}^{\xi_{i+1}}\dfrac{dx}{1+q^2(x-\xi_i)^2}
\end{equation}
for every $i=1,\ldots,h$. Indeed,  \eqref{xyz} is obvious if $u'_+\ge 0$ on $(\xi_i,\xi_{i+1})$, i.e., $u'\ge 0$ a.e. on $(\xi_i,\xi_{i+1})$. Else if   $u'\le 0$ a.e. on $(\xi_i,\ \xi_{i+1})$, the $q$-concavity inequality $0\ge u'_+(x)\ge q(x-\xi_{i+1})$ and Proposition \ref{changevar} yield
\[D_{(\xi_i,\xi_{i+1})}(u)\ge
\int\limits_{\xi_i}^{\xi_{i+1}}\dfrac{dx}{1+q^2(x-\xi_{i+1})^2}=\int\limits_{\xi_i}^{\xi_{i+1}}\dfrac{dx}{1+q^2(x-\xi_{i})^2}. \]
If instead $S\setminus \{\sigma\}$ is empty we just have $h=0$ and $\xi_1=a$.
 Similarly,  $q$-concavity implies $0\le u'_+(x)\le q(x-\sigma)$ on $(\sigma,\xi_1)$, and in case  $\sigma>0$ it gives
   $q(x-\sigma)\le u'_+(x)\le0$ on $(0,\sigma)$. Then the usual change of variables of Proposition \ref{changevar} entails
\begin{equation}\label{zerosigma}
D_{(0,\sigma)}(u)\ge \int\limits_0^\sigma \dfrac{dx}{1+q^2x^2}, \qquad D_{(\sigma,\xi_1)}(u)\ge  \int\limits_\sigma^{\xi_1} \dfrac{dx}{1+q^2(x-\sigma)^2}.
\end{equation}
In general,
from \eqref{xyz} and \eqref{zerosigma} we have
\[\begin{aligned}
D_{(0,a)}(u)&=D_{(0,\sigma)}(u)+D_{(\sigma,{\xi_1})}(u)+\sum_{i=1}^hD_{(\xi_i,\xi_{i+1})}(u)\\&\ge
\int\limits_0^\sigma \dfrac{dx}{1+q^2x^2}+\int\limits_\sigma^{\xi_1} \dfrac{dx}{1+q^2(x-\sigma)^2}+
\sum\limits_{i=1}^{h}\int\limits_{\xi_i}^{\xi_{i+1}} \dfrac{dx}{1+q^2(x-\xi_i)^2} \\
&=\int\limits_0^\sigma \dfrac{dx}{1+q^2x^2}+ \dfrac1q\arctan(q\xi_1-q\sigma)+\sum_{i=1}^{h}\dfrac1q\arctan(q\xi_{i+1}-q\xi_i).
\end{aligned}\]
The sub-additivity of $	\arctan$ in $\mathbb{R}_+$ then implies
\begin{equation}\label{sigmaa}\begin{aligned}
D_{(0,a)}(u)&\ge\int\limits_{0}^{\sigma}\dfrac{dx}{1+q^2x^2}+\dfrac1q\arctan(qa-q\sigma)
\\&=\int\limits_{0}^{a}\dfrac{dx}{1+q^2x^2}+\dfrac1q\arctan(qa-q\sigma)-\dfrac1q\arctan(qa)+\dfrac1q\arctan(q\sigma).
\end{aligned}\end{equation}

\textbf{Step 3}.
Adding together \eqref{minuszeta}, \eqref{zetazero} and \eqref{sigmaa}  we get
\begin{equation}\label{Dlast}
D_{(-a,a)}(u)\ge
\int\limits_{-a}^a\dfrac{dx}{1+q^2x^2}
+\dfrac1qF_{qa}(q\sigma, u'_+(0),q\zeta)
\end{equation}
where $F_{qa}$ is the function defined in \eqref{efflambda} with $\lambda=qa>0$,
so that in order to conclude it is enough to show that
\begin{equation}\label{app}(q\sigma, u'_+(0),q\zeta)\in \Delta_{qa},
\end{equation}
being $\Delta_{qa}$  the set  defined in \eqref{deltalambda} with $\lambda=qa$, and then apply Proposition \ref{3var}.

We already observed that $q\sigma\le qa$, $u'_+(0)\le 0$ and $q\zeta\le 0$. Moreover, $q$-concavity and $u'_+(\sigma)=0$ yield  $u'_+(0)\ge u'_+(\sigma)-q\sigma=-q\sigma$. Since $u(-a)=u(a)\ge u(x)$ for every $x\in [-a,a]$, by applying Proposition \ref{paral} we obtain that $2u'_+(0)\ge -qa.$
At last we claim that $q\sigma-qa\le q\zeta.$ Indeed we have $\int_o^\sigma u'(x)dx\le 0$ and $u'(x)\le q(x-\sigma)$ a.e. on $(\sigma,a)$, whereas $u(a)=u(-a)\ge u(\zeta)$ by assumption, thus
\[\begin{aligned}
0&=\int\limits_{-a}^au'(t)\,dt=\int\limits_{-a}^\zeta u'(t)\,dt+\int\limits_{\zeta}^au'(t)\,dt=u(\zeta)-u(a)+\int\limits_{\zeta}^au'(t)\,dt\le\int\limits_{\zeta}^au'(t)\,dt\\&
=\int\limits_{\zeta}^0u'(t)\,dt+\int\limits_{0}^\sigma u'(t)\,dt+\int\limits_{\sigma}^au'(t)\,dt\le \int\limits_{\zeta}^0qt\,dt+\int\limits_{\sigma}^a q(t-\sigma)\,dt
=\dfrac{1}2(a-\sigma-\zeta)(qa-q\sigma+q\zeta),
\end{aligned}\]
but $a-\sigma\ge0$ and $\zeta \le0$ then the claim is proved, and \eqref{app} is shown, so that \eqref{Dlast} and Proposition \ref{3var} allow to conclude that
\[D_{(-a,a)}(u)\ge \int\limits_{-a}^a\dfrac{dx}{1+q^2x^2}=D_{(-a,a)}\left(\wp_{-a;a}^{u(a)}\right),\]
in case $x\mapsto u'(x)-qx$ is piecewise constant.

\textbf{Step 4}.
In order to treat a general $q$-concave function $u$, satisfying $u(-a)=u(a)\ge u(x)$ for any $x\in [-a,a]$, we approximate it by means of the sequence $u_h$ from Definition \ref{approx}.
Then, \eqref{tha} applies to $u_h$ for each $h$, as just shown. Invoking Proposition \ref{Dconvergence}, we find \eqref{tha} for $u$.

We are left to prove that the only equality case in \eqref{tha} is $u=\wp_{-a;a}^{u(a)}$, i.e., $u'(x)=qx$ in $(-a,a)$.
This is done by revisiting the previous steps and by taking some care in the choice of the approximating sequence $u_h$.
Assume that $u$ satisfies \eqref{tha} with equality.
As usual,  we  may assume  that the number $\zeta$ defined by \eqref{zetagreca} is nonpositive, then  $u'_+(0)\le 0$. If $\zeta=-a$, then $\int_{-a}^a u'=0$ readily implies $u'=qx$ on $(-a,a)$. Therefore, we assume that $\zeta>-a$ as well, and we aim at reaching a contradiction.

We first claim that $u'_+\le 0$  in the whole $(0,a)$ yields contradiction: indeed, it would give,  by taking into account that $u(x)\le u(-a)$ in $[-a,a]$ and that $u'\le 0$ a.e. on $(\zeta,0)$,
\begin{equation*}
0=\int\limits_{-a}^au'(x)\,dx\le\int\limits_{\zeta}^au'(x)\,dx
=\int\limits_{\zeta}^0u'(x)\,dx+\int\limits_{0}^a u'(x)\,dx\le \int\limits_{\zeta}^0qx\,dx\le -\dfrac{q}{2}\zeta^{2},
\end{equation*}
that is, $\zeta=0$.
\BBB But $\zeta= 0$ implies $D_{(-a,0)}(u)\ge D_{(-a,0)}\left(\wp_{-a;a}^{u(a)}\right)$: this follows from Step 1, see \eqref{A1} and \eqref{minuszeta},
where in this case the set $A_1$ is a possibly infinite but countable union of disjoint open intervals (because $A_1$ is open, since $u'_+$ is lower semicontinous).
\EEE
 On the other hand, Proposition \ref{paral} implies $u'(x)\ge \tfrac q2 (x-a)$ a.e. on $(0,a)$, then  $u'\le 0$  gives $u'(x)^2\le \tfrac{q^2}4(x-a)^2$ and  Proposition \ref{changevar} yields
\[
D_{(0,a)}(u)=\int_0^a\frac{dx}{1+u'(x)^2}\ge\int_0^a\frac{dx}{1+\tfrac {q^2} 4(x-a)^2}=\int_0^a\frac{dx}{1+\tfrac {q^2} 4 x^2}>
D_{(0,a)}\left(\wp_{-a;a}^{u(a)}\right),
\]
that is, summing up, $D_{(-a,a)}(u)> D_{(-a,a)}(\tilde u)$, a contradiction.
The  claim is proved and thus we assume from now that $u'_+>0$ at some point in $(0,a)$, which implies, by $q$-concavity of $u$ and  right continuity of $u'_+$, that
$ \sigma$ from \eqref{S}
is  well defined for $u$, with $u'_+(\sigma)=0$ and $-a<\zeta\le0\le\sigma<a$.

We approximate $u$ with a sequence of $q$-concave piecewise parabolic functions  $u_{h}$, constructed by means of Remark \ref{nonuniform},
 such that $u_h(\pm a)=u(\pm a)$,  $ (u_{h})'\to  u'$ a.e. on $(-a,a)$ and 
 \begin{equation}\label{points}
  (u_{h})'_{{+}}(\sigma)= u'_{+}(\sigma),\quad
  (u_{h})'_{{+}}(\zeta)= u'_{+}(\zeta),\quad
     (u_{h})'_{{+}}(0)= u'_{+}(0), \quad \forall h\in\mathbb{N}.
     \end{equation}
 \EEE We let  $\zeta_h:=\inf\{x\in(-a,a)\colon (u_h)'_+(x)\le qx\}$.
 By definition of $\zeta_h$ and $\zeta$ and by \eqref{points},  we see that $\zeta_h\le\zeta$ and that $\zeta_h\to\zeta$ as $h\to \infty$. We let $\sigma_h:=\min\{x\in [0,a)\colon (u_h)'_+(x)=0\}$, then \eqref{points} implies $\sigma_h\le\sigma$. Notice that if $u'_+(0)=0$, then $\sigma=0$ so that $\sigma_h=0$ for any $h$. Else if $(u_h)'_+(0)=u'_+(0)<0$ we have by $q$-concavity $(u_h)'_+(x)\le qx+u'_+(0)$ on $[0,a)$, implying  $q\sigma_h\ge-{(u_h)'_+(0)}=-u'_+(0)$. Therefore $\sigma_h\in\left[-{u'_+(0)}/{q},\sigma\right]$, and we may assume, up to passing on a not relabeled subsequence, that $\sigma_h\to\bar\sigma\in\left[-{u'_+(0)}/{q},\sigma\right]$ as $h\to\infty$.

We apply the previous steps  obtaining \ref{Dlast} for $u_h$, and passing to the limit with the a.e. convergence of $u'_h$ to $u'$ and with the continuity of function $F_{qa}$ we get
\begin{equation*}
D_{(-a,a)}(u)\ge
\int\limits_{-a}^a\dfrac{dx}{1+q^2x^2}
+\dfrac1qF_{qa}(q\bar\sigma, u'_+(0),q\zeta).
\end{equation*}
If $F_{qa}(q\bar\sigma, u'_+(0),q\zeta)>0$ we contradict the fact that $u$ satisfies \eqref{tha} with equality.
By taking into account that $\bar\sigma\le \sigma< a$,  Proposition \ref{3var} shows that $F_{qa}(q\bar\sigma, u'_+(0),q\zeta)=0$
if and only if one of the following two cases occurs
\[
{\rm i)}\;  0<u'_{+}(0)=-q\bar\sigma,\ \ \zeta=\bar\sigma-a,\qquad
{\rm ii)}\; \bar\sigma=0=  u'_{+}(0).
\]

If i) were true then $ u'_+(x)\le qx+u'_+(0)=q(x-\zeta-a)$ for every $x\in (0,a)$, hence by taking into account that $u(\zeta)\le u(-a)$ we would get
\[0=\int_{-a}^{a} u'(x)\,dx\le \int_{\zeta}^{0} qx\,dx+\int_{0}^{a}q(x-\zeta-a)\,dx=-\dfrac{q}{2}(\zeta+a)^{2}\]
that is $\zeta=-a$, a contradiction.

Eventually if ii) occurs then we are in the case $\sigma_h=\sigma=0$. \BBB In this case it is clear that $u_+'(x)-qx$, which is monotone, is identically $0$ on $(\zeta,0)$, and moreover  we immediately get
\begin{equation}\label{ecco}D_{(-a,0)}\left(\wp_{-a;a}^{u(a)}\right)\le D_{(-a,0)}(u),
\end{equation}
 since equality holds on $(\zeta,0)$ where $u'(x)\equiv qx$, and since we apply Step 1 on $(-a,\zeta)$, recalling as before that in general the set $A_1$ therein is a countable union of disjoint open intervals.

 \EEE If $0\le u'_+(x)\le qx$ in $(0,a)$, either $u'=qx$ a.e. in $(0,a)$,  thus in $(\zeta,a)$, and then we easily see from the null mean property of $u'$ that $\zeta=-a$ (a contradiction), or $u'=qx$ does not hold a.e. in $(0,a)$ \BBB and  we readily conclude that
$D_{(0,a)}\left(\wp_{-a;a}^{u(a)}\right)< D_{(0,a)}(u)$, which, combined with \eqref{ecco},  yields that \eqref{tha} does not hold with equality, a contradiction. \EEE

 Else if $u'_+<0$ at some point  $c\in(0,a)$,
since we are also excluding $u'_+\le 0$ on the whole $(0,a)$, \BBB we  also fix a point  $d\in (0,a)$ such that $u'_+(d)>0$.  In this case, we assume that the  above approximating sequence $u_h$  satisfies  a further restriction, still by means of Remark \ref{nonuniform}: we let $(u_{h})'_{{+}}(c)= u'_{+}(c)$ and
$  (u_{h})'_{{+}}(d)= u'_{+}(d)
$ for any $h\in\mathbb{N}$. Therefore, after defining
$$S_h:=\{x\in (0,a)\colon (u_h)'_-(x)=(u_h)'_+(x)=0\}\cup \{x\in (0,a)\colon (u_h)'_{+}(x)<0<(u_h)'_{-}(x)\},$$
it is clear that for any $h\in\mathbb{N}$ there is an element $\bar x_h$ in the set $S_h\cap[c\wedge d, c\vee d]$. Indeed, $u_h'$ has to change sign at least once on $[c\wedge d, c\vee d]$.
Now
 we can reason as in Step 2.
 \BBB  Fix  $h\in\mathbb{N}$.
 Let $0=\xi_0<\xi_1<\ldots< \xi_{n}=\bar x_h$ and $\bar x_h=\xi_{n+1}\ldots<\xi_{n+m-1}$ denote the finitely many points of $S_h$, and let $\xi_{n+m}=a$ ($S_h$ contains at least $\bar x_h$). Since \eqref{xyz} holds for $u_h$ in any of the intervals $(\xi_i,\xi_{i+1})$, where $u'_h$ does not change sign, we get
\[
\begin{aligned}
D_{(0,a)}(u_h)&\ge\sum_{i=0}^{n+m-1}\int_{\xi_i}^{\xi_{i+1}}\frac{dx}{1+q^2(x-\xi_i)^2}=
\sum_{i=0}^{n+m-1}\tfrac 1q\arctan{(q(\xi_{i+1}-\xi_i))}
\\
&\ge \frac 1q\arctan(q\bar x_h)+\frac 1q\arctan(q(a-\bar x_h))
\end{aligned}
\]
where we have split the sum and used the sub-additivity of arctan. By passing to the limit  with Proposition \ref{Dconvergence} and Remark \ref{nonuniform} as $h\to \infty$ (possibly on a subsequence, such that $\bar x_h$ converge to some $\bar x\in[c\wedge d,c\vee d]$), and also using \eqref{ecco}, we get
\[
D_{(-a,a)}(u)\ge\int_{-a}^0\frac{dx}{1+q^2x^2}+\frac 1q\arctan(q\bar x)+\frac 1q\arctan(q(a-\bar x))>\int_{-a}^0\frac{dx}{1+q^2x^2}+\frac 1q\arctan(qa),
\]
since $0<\bar x<a$. The right hand side is exactly $D_{(-a,a)}\left(\wp_{-a;a}^{u(a)}\right)$, this is a contradiction. \EEE
\end{proof}

\begin{proposition}[\bf concave rearrangement]\label{rearr}Let  $a<b$ and let $u$ be a nonincreasing \BBB  absolutely continuous \EEE function on $[a,b]$.
 \EEE Then there exists a nonincreasing concave function $u^\ast\colon[a,b]\to[u(b),u(a)]$ such that 
  $D_{(a,b)}(u)=D_{(a,b)}(u^\ast).$
\end{proposition}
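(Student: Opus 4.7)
The plan is to rearrange the derivative. Since $u$ is nonincreasing and absolutely continuous on $[a,b]$, the function $f(x):=-u'(x)$ is nonnegative and integrable on $(a,b)$ with $\int_a^b f(x)\,dx = u(a)-u(b)$. I would let $f^*:[a,b]\to[0,+\infty)$ denote the nondecreasing rearrangement of $f$, i.e.\ the (a.e.\ unique) nondecreasing function on $[a,b]$ equimeasurable with $f$, so that in particular $f^*$ is integrable with the same integral as $f$.

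Next, I would define
\[u^*(x):=u(a)-\int_a^x f^*(t)\,dt,\qquad x\in[a,b].\]
Then $u^*$ is absolutely continuous with $(u^*)'=-f^*\le 0$, so $u^*$ is nonincreasing; since $-f^*$ is itself nonincreasing, $u^*$ is concave on $[a,b]$. Moreover $u^*(a)=u(a)$ and $u^*(b)=u(a)-\int_a^b f^*(t)\,dt=u(a)-\int_a^b f(t)\,dt = u(b)$, hence $u^*$ takes values in $[u(b),u(a)]$.

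The key point is that the resistance depends on $u'$ only through $|u'|^2$, hence only through the distribution function of $f$. By equimeasurability, for any nonnegative Borel function $g$ one has $\int_a^b g(f(x))\,dx=\int_a^b g(f^*(x))\,dx$; applying this with $g(y):=1/(1+y^2)$ yields
\[D_{(a,b)}(u)=\int_a^b\frac{dx}{1+f(x)^2}=\int_a^b\frac{dx}{1+f^*(x)^2}=D_{(a,b)}(u^*),\]
which closes the argument.

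There is essentially no obstacle beyond recalling the equimeasurability of $f$ with its monotone rearrangement; the only point to be careful about is selecting the \emph{nondecreasing} rearrangement of $f=-u'$ (rather than the nonincreasing one), since it is precisely the nondecreasing character of $f^*$ that makes the primitive $u^*$ concave while keeping it nonincreasing on $[a,b]$.
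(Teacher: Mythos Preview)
Your argument is correct and is more direct than the paper's. The paper does not invoke measure-theoretic monotone rearrangement; instead it approximates $u$ by piecewise affine functions $u_h$ on a uniform grid, permutes the finitely many slopes of each $u_h$ into nonincreasing order to obtain a concave $u_h^*$ with $D_{(a,b)}(u_h^*)=D_{(a,b)}(u_h)$, and then extracts a concave limit $u^*$ via the compactness lemma for bounded concave functions, passing the resistance through by dominated convergence. Your route is shorter, avoids the approximation-and-compactness machinery, and in fact yields the sharper information $u^*(a)=u(a)$ and $u^*(b)=u(b)$ (the paper's limit construction only guarantees that the range of $u^*$ is contained in $[u(b),u(a)]$, because the convergence is merely local). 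The paper's approach, on the other hand, is consistent with the piecewise approximation techniques used elsewhere in the argument and does not require recalling the equimeasurability properties of rearrangements. Your remark about choosing the \emph{nondecreasing} rearrangement of $-u'$ is exactly the right orientation to make $u^*$ concave.
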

\begin{proof}
Let $\left(u_h\right)_{h\in\mathbb{N}}$ denote a sequence of continuos, piecewise affine, nonincreasing approximating functions, constructed on a equispaced grid of step $(b-a)/h$ on the interval $[a,b]$, and coinciding with $u$ at the nodes of the grid.
\BBB At any differentiability point $x$ of $u$ in $(a,b)$ which for any $h$ is not a grid node (that is, for a.e. $x$ in $(a,b)$), there holds $u_h'(x)\to u'(x)$ as $h\to\infty$. \EEE

For every $h\in\mathbb{N}$ let us exchange the position of each segment of the graph of $u_h$ in such a way that the slopes get ordered in a nonincreasing way.
If $s_{j,h}$ denotes the slope of the piecewise affine function $u_h$ on the interval $[a+(b-a)(j-1)/h, a+(b-a)j/h]$, $j=1,\ldots ,h$, we denote by $s^*_{1,h},\ldots, s^*_{h,h}$ a permutation of the slopes such that $s^*_{1,h}\ge s^*_{2,h}\ge\ldots\ge s^*_{h,h}$. We define $u_h^*$ as the unique continuous, piecewise affine function such that the slope of $u_h^*$ is $s^*_{j,h}$ on the interval $[a+(b-a)(j-1)/h, a+(b-a)j/h]$, $j=1,\ldots ,h$, and such that  $u_h^*(a)=u_h(a)=u(a)$, $u_h^*(b)=u_h(b)=u(b)$. \BBB It is  clear that $D_{(a,b)}(u_h^*)=D_{(a,b)}(u_h)$ for every $h\in\mathbb{N}$.


\BBB  Notice that $(u_h^*)_{h\in\mathbb{N}}$ is a family of concave, uniformly bounded functions on $[a,b]$. By Lemma \ref{compactness} in the Appendix, the family $(u_h^*)_{h\in\mathbb{N}}$ has a concave  decreasing limit point $u^*:[a,b]\to[u(b),u(a)]$ in the strong $W^{1,1}_{loc}((a,b))$ topology (it is extended by continuity to the closed interval). This entails uniform convergence on compact subsets of $(a,b)$ and $a.e.$ convergence of derivatives (up to extracting a subsequence), allowing to pass to the limit with dominated convergence and to get
\begin{equation*}\label{4}
D_{(a,b)}(u^*)=\lim_{h\to\infty}D_{(a,b)}(u_h^*)=\lim_{h\to\infty}D_{(a,b)}(u_h)=D_{(a,b)}(u).
\end{equation*}
Hence, $u^*$ is the desired concave rearrangement.
\end{proof}

\BBB
\begin{remark}\label{plus}\rm
In the same assumptions of {\rm Proposition \ref{rearr}} and with the same notation, if $c<0$ exists such that the set of differentiability points of $u$ with $u'>c$ has positive measure, the same property holds for $u^*$ as well. Indeed, in such case there exists $\eps>0$ such that the set $B$ where $u'>c+\eps$ has positive measure as well. Since $u_h'$ converge to $u'$ a.e. on $B$,  by Egorov theorem there is a positive measure subset $B^*$ of $B$ such that $u'_h\to u'$ uniformly on $B^*$. Then there exists $h_0>0$ such that, for any $h>h_0$ and any $x\in B^*$, there holds $u_h'(x)>c+\eps/2$. For any $h>h_0$, after rearranging, since $u_h^*$ are concave, we have $(u_h^*)'>c+\eps/2$ a.e. on an interval $(a,\xi)$ with length equal to the measure of $B^*$. Since $(u_h^*)'\to (u^*)'$ a.e. on $(a,b)$, we conclude that $(u^*)'\ge c+\eps/2>c$ a.e. on $(a,\xi)$.
\end{remark}
\EEE

For the proof of Lemma \ref{strucside} below, we will need a general result about the resistance functional, holding also in higher dimension. It is the property $|\nabla u|\notin (0,1)$, a proof of which is given  in \cite[Theorem 2.3]{BFK2}. In  dimension one we provide a simpler proof with the following

\begin{proposition}\label{spez}
Let  $a<b$ and let $u$ be a concave, nonincreasing, continuous function on $[a,b]$, \BBB such that $u(a)>u(b)$. \EEE
Then there exists $c\in [a,b)$ such that $u(a)-u(b)\ge b-c$ and
\[D_{(a,b)}(u)\ge D_{(a,b)}(u_{a;b}^{c}),\]
where $u_{a;b}^{c}\colon[a,b]\to \mathbb{R}$ is defined by
\begin{equation}\label{spezzata}u^c_{a;b}(y):=\left\{\begin{array}{lcl}
u(a)&\text{if}&y\in[a,c]\\ \\
\dfrac{y-b}{c-b}\left(u(a)-u(b)\right)+u(b)&\text{if}&y\in(c,b].
\end{array}
\right.\end{equation}
\end{proposition}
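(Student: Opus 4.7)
The plan is to choose $c$ explicitly depending on whether $m := u(a) - u(b) > 0$ is smaller or larger than the length $b-a$. Let $v := -u' \ge 0$ (defined almost everywhere; by absolute continuity of $u$, $\int_a^b v\,dx = m$), and write $h(s) := 1/(1+s^2)$ for the resistance integrand.

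\emph{Subcritical case $m \le b-a$.} I take $c := b-m \in [a,b)$, so that the linear piece of $u^c_{a;b}$ has slope exactly $-1$; then $D_{(a,b)}(u^c_{a;b}) = (c-a) + (b-c)/2 = b-a-m/2$ and the slope constraint $m \ge b-c$ holds with equality. The inequality $D_{(a,b)}(u) \ge b - a - m/2$ is equivalent to $\int_a^b v^2/(1+v^2)\,dx \le m/2$, which will follow by integration from the pointwise bound $v^2/(1+v^2) \le v/2$ (itself a restatement of $v(v-1)^2 \ge 0$).

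\emph{Supercritical case $m \ge b-a$.} I take $c := a$, so that $u^a_{a;b}$ is the affine interpolant from $(a,u(a))$ to $(b,u(b))$ with slope $-\bar v$, where $\bar v := m/(b-a) \ge 1$; then $D_{(a,b)}(u^a_{a;b}) = (b-a)\,h(\bar v)$ and the slope constraint $m \ge b-a = b-c$ holds. Since $\int_a^b (v-\bar v)\,dx = 0$, the claim $D_{(a,b)}(u) \ge (b-a)\, h(\bar v)$ will reduce to the tangent-line inequality $h(v) \ge h(\bar v) + h'(\bar v)(v - \bar v)$ for every $v \ge 0$. To prove this I will clear denominators to obtain the explicit factorization
\begin{equation*}
h(v) - h(\bar v) - h'(\bar v)(v - \bar v) = \frac{(\bar v - v)^2\,(\bar v^2 + 2\bar v\, v - 1)}{(1+v^2)(1+\bar v^2)^2};
\end{equation*}
for $\bar v \ge 1$ and $v \ge 0$ the factor $\bar v^2 + 2\bar v\, v - 1$ is bounded below by $\bar v^2 - 1 \ge 0$, so the right-hand side is nonnegative and integration closes the argument.

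The main obstacle is the tangent-line inequality of the supercritical case: $h$ is not globally convex on $[0,\infty)$ since $h''(v) = (6v^2-2)/(1+v^2)^3$ changes sign at $v = 1/\sqrt{3}$, so the inequality does not follow from a one-line convexity argument, and the sign must instead be pinned down through the algebraic identity above. Beyond this the argument is elementary, and the concavity hypothesis on $u$ is used only to ensure absolute continuity and the a.e.\ existence of $u'$.
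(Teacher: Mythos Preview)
Your proof is correct and takes a genuinely different route from the paper's.

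The paper exploits concavity structurally: it locates the point $c^*$ where the one-sided derivative $u'_+$ crosses $-1$, replaces $u$ on $[a,c^*]$ by a flat segment followed by a slope-$(-1)$ segment (using the same elementary inequality $\tfrac{1}{1+t^2}\ge 1+\tfrac t2$ for $t\le 0$ that underlies your subcritical case), and then applies Jensen's inequality on $[c,b]$, where $u'_{c^*}\le -1$ so that $t\mapsto \tfrac{1}{1+t^2}$ is convex. The resulting $c=c^*+u(c^*)-u(a)$ depends on $u$.

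Your argument instead chooses $c$ based only on the total drop $m$ versus the length $b-a$, and replaces Jensen by the explicit tangent-line identity at $\bar v$, whose sign you control through the factor $\bar v^2+2\bar v v-1\ge \bar v^2-1\ge 0$. This is more elementary and, as you note, does not use concavity beyond absolute continuity, so it actually proves the statement for any nonincreasing absolutely continuous $u$ with $u(a)>u(b)$; this is precisely the strengthening the paper obtains only afterwards (Corollary~\ref{megliospez}) by first passing through a concave rearrangement. The paper's approach, on the other hand, is closer in spirit to the $|\nabla u|\notin(0,1)$ argument and adapts more transparently to the radial setting (Lemma~\ref{marc}).
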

\begin{proof} 
Since $u$ is concave,
 then the set $A_u:=\{x\in (a,b)\colon u'_+(x)\ge -1\}$ is connected,  and we define
\[
c^\ast:=
\left\{
\begin{array}{lll}
\sup{A_u}&\text{if}&A_u\neq\varnothing\\
a&\text{if}&A_u=\varnothing
\end{array}
\right.\]
and $u_{c^\ast}\colon [a,b]\to\mathbb{R}$ as follows:
\[u_{c^\ast}(x):=\left\{
\begin{array}{lll}
u(a)&\text{if}&x\in[a,c^\ast+u(c^\ast)-u(a)]\\
-x+c^\ast+u(c^\ast)&\text{if}&x\in [c^\ast+u(c^\ast)-u(a),c^\ast]\\
u(x)&\text{if}&x\in[c^\ast,b].
\end{array}
\right.
\]
Since $\tfrac{1}{1+t^2}\ge 1+\tfrac{t}{2}$ for every $t\le 0$, we have
\[D_{(a,c^\ast)}(u)\ge\int\limits_a^{c^\ast}\left(1+\dfrac{u'(x)}{2}\right)dx=c^\ast-a+\dfrac12(u(c^\ast)-u(a))=D_{(a,c^\ast)}(u_{c^\ast})\] where the last equality follows by a simple calculation. Then $D_{(a,b)}(u)\ge D_{(a,b)}(u_{c^\ast})$.

Let now $c:=c^\ast+u(c^\ast)-u(a)$. We claim that $D_{(a,b)}(u_{c^\ast})\ge D_{(a,b)}(u^c_{a;b})$. To see this, it is enough to prove that $D_{(c,b)}(u_{c^\ast})\ge D_{(c,b)}(u^c_{a;b})$. This immediately follows by Jensen inequality, since the function $f\colon (-\infty,-1]\to\mathbb{R}$ defined by $f(t)=\tfrac{1}{1+t^2}$ is convex and $u_{c^\ast}'\le -1$ a.e. in  $ (c,b)$.
\end{proof}

\begin{corollary}\label{megliospez}
Let  $a<b$ and let $u$ be a nonincreasing absolutely continuous function on $[a,b]$, \BBB such that $u(a)>u(b)$. \EEE Then there exists $c\in [a,b)$ such that $u(a)-u(b)\ge b-c$ and
\begin{equation*}
D_{(a,b)}(u)\ge D_{(a,b)}(u^c_{a;b}),
\end{equation*}
where $u^c_{a;b}\colon [a,b]\to\mathbb{R}$ is the function defined in \eqref{spezzata}.
\end{corollary}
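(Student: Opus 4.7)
The plan is to combine Proposition \ref{rearr} with Proposition \ref{spez}. Given a nonincreasing absolutely continuous $u$ on $[a,b]$ with $u(a)>u(b)$, Proposition \ref{rearr} furnishes a nonincreasing concave function $u^{\ast}\colon [a,b]\to[u(b),u(a)]$ with $D_{(a,b)}(u)=D_{(a,b)}(u^{\ast})$. I would then apply Proposition \ref{spez} to $u^{\ast}$ and transfer the resulting broken-line estimate back to $u$.

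First I would verify that $u^{\ast}$ satisfies the hypotheses of Proposition \ref{spez}, in particular that $u^{\ast}(a)=u(a)$ and $u^{\ast}(b)=u(b)$. This is immediate from the construction in the proof of Proposition \ref{rearr}: each rearranged approximant $u_h^{\ast}$ fixes the endpoint values $u(a),u(b)$, and since $u^{\ast}$ is concave, nonincreasing and takes values in $[u(b),u(a)]$, its maximum $u(a)$ is attained at $a$ and its minimum $u(b)$ at $b$ (extended by continuity). In particular $u^{\ast}(a)>u^{\ast}(b)$, so Proposition \ref{spez} applies and yields some $c\in[a,b)$ with $u(a)-u(b)=u^{\ast}(a)-u^{\ast}(b)\ge b-c$ and
\[
D_{(a,b)}(u^{\ast})\ge D_{(a,b)}\bigl((u^{\ast})^{c}_{a;b}\bigr).
\]

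Next I would observe that the function $(u^{\ast})^{c}_{a;b}$ defined by formula \eqref{spezzata} depends on $u^{\ast}$ only through the endpoint values $u^{\ast}(a)$ and $u^{\ast}(b)$, which coincide with those of $u$. Hence $(u^{\ast})^{c}_{a;b}=u^{c}_{a;b}$. Combining this identity with the equality $D_{(a,b)}(u)=D_{(a,b)}(u^{\ast})$ from Proposition \ref{rearr} and the inequality above gives
\[
D_{(a,b)}(u)=D_{(a,b)}(u^{\ast})\ge D_{(a,b)}\bigl((u^{\ast})^{c}_{a;b}\bigr)=D_{(a,b)}\bigl(u^{c}_{a;b}\bigr),
\]
which is the desired conclusion.

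There is no real obstacle here beyond the bookkeeping for endpoint preservation; the only thing to double-check is that the limit rearrangement in Proposition \ref{rearr} truly attains the original boundary values (not merely lies in $[u(b),u(a)]$), and this is guaranteed by the monotonicity and concavity of $u^{\ast}$ together with the range constraint.
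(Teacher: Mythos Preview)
Your overall strategy---apply Proposition~\ref{rearr} to pass to a concave $u^\ast$, then apply Proposition~\ref{spez}---is exactly the paper's. The difference lies in how you reconnect $(u^\ast)^c_{a;b}$ with $u^c_{a;b}$, and here your argument has a genuine gap.

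You claim that $u^\ast(a)=u(a)$ and $u^\ast(b)=u(b)$, and you justify this by saying that $u^\ast$ is concave, nonincreasing, and takes values in $[u(b),u(a)]$. But those three properties alone do \emph{not} force the extreme values to be attained: any constant function with value in $(u(b),u(a))$ satisfies all three. The range constraint only yields $u^\ast(a)\le u(a)$ and $u^\ast(b)\ge u(b)$. One can in fact show $u^\ast(a)=u(a)$ via a secant bound (each $u_h^\ast$ lies above the chord from $(a,u(a))$ to $(b,u(b))$, and this survives the limit), but the analogous argument fails at $b$: Proposition~\ref{rearr} only asserts $W^{1,1}_{\mathrm{loc}}$ convergence, and for concave nonincreasing functions a steep drop can concentrate at the right endpoint in the limit. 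Proving $u^\ast(b)=u(b)$ would require something like $L^1$-convergence of the rearranged derivatives on the full interval, which is not part of Proposition~\ref{rearr} as stated.

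The paper sidesteps this entirely. It uses only the safe inequalities $u^\ast(a)\le u(a)$, $u^\ast(b)\ge u(b)$, hence $u(a)-u(b)\ge u^\ast(a)-u^\ast(b)\ge b-c$. Then, rather than identifying $(u^\ast)^c_{a;b}$ with $u^c_{a;b}$, it compares their resistances directly: on $[a,c]$ both are constant, while on $[c,b]$ the linear part of $u^c_{a;b}$ has slope $-\tfrac{u(a)-u(b)}{b-c}\le -\tfrac{u^\ast(a)-u^\ast(b)}{b-c}\le -1$, and since $t\mapsto (1+t^2)^{-1}$ is increasing on $(-\infty,-1]$ this gives $D_{(a,b)}\bigl((u^\ast)^c_{a;b}\bigr)\ge D_{(a,b)}\bigl(u^c_{a;b}\bigr)$. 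That extra monotonicity step is what your argument is missing.
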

\begin{proof}	\BBB
We apply Proposition \ref{rearr} to $u$, obtaining a nonincreasing concave function $u^\ast\colon[a,b]\to[u(b),u(a)]$ such that
$D_{(a,b)}(u)=D_{(a,b)}(u^\ast)$. Since $u(a)>u(b)$, then $u^*$ is non constant.  We apply Proposition \ref{spez} to $u^*$, obtaining $c\in [a,b)$ and $(u^\ast)^c_{a;b}$,  defined by means of \eqref{spezzata}, such that $u(a)-u(b)\ge u^*(a)-u^*(b)\ge b-c$ and  $D_{(a,b)}(u^*)\ge D_{(a,b)}((u^*)^c_{a;b})$.
But then we easily see that $D_{(a,b)}((u^*)^c_{a;b})\ge D_{(a,b)}(u^c_{a;b})$ and we conclude.
\end{proof}

\BBB
\begin{remark}\rm Notice that the condition $u(a)-u(b)\ge b-c$ on $c$ indicates that the straight line corresponding to the restriction of  $u_{a;b}^c$ on $[c,b]$ has slope smaller than or equal to $-1$.
\end{remark}
\EEE

\begin{lemma}[\bf The side]
\label{strucside} Let
$a<b$ and $q\ge 0$. Let $u$ be a $q$-concave continuous function on $[a,b]$ such that $u(y)\le u(a)$ for every $y\in [a,b]$ and $u(b)<u(a)$. Then there exists $\gamma\in [a,b)$ such that \BBB $u(a)-u(b)\ge b-\gamma$ and \EEE
\begin{equation*}\label{parspez}
D_{(a,b)}(u)\ge D_{(a,b)}(w_{a,\gamma,b}),
\end{equation*}
where $w_{a,\gamma,b}\colon [a,b]\to \mathbb{R}$ is defined by
\begin{equation}\label{ugamma}w_{a,\gamma,b}(y):=\left\{\begin{array}{lcl}
\wp_{a;\gamma}^{u(a)}(y)&\text{if}&y\in[a,\gamma)\\ \\
\dfrac{y-b}{\gamma-b}\left(u(a)-u(b)\right)+u(b)&\text{if}&y\in[\gamma,b].
\end{array}
\right.\end{equation}
The result holds with $\gamma\in (a,b)$ if $u$ is not strictly decreasing on $[a,b]$.
\end{lemma}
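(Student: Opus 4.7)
Plan: I would combine Lemma~\ref{struccenter} (which flattens a bump of $u$ at the level $u(a)$ into a parabola) with Corollary~\ref{megliospez} (which reduces a monotone descent to a plateau--plus--steep-linear profile), joined through the elementary inequality $\arctan(x+y)\le\arctan x+y$, valid for $y\ge 0$. This estimate is exactly what is needed to absorb the plateau at level $u(a)$ produced by Corollary~\ref{megliospez} into the adjacent parabolic region, merging them into a single longer parabola $\wp_{a;c}^{u(a)}$ at no cost in resistance.

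Concretely, I would set $\beta:=\sup\{y\in[a,b]:u(y)=u(a)\}$, which lies in $[a,b)$ because $u(b)<u(a)$ and satisfies $u(\beta)=u(a)$ by continuity. If $\beta>a$, Lemma~\ref{struccenter} applied on $[a,\beta]$ yields
\[
D_{(a,\beta)}(u)\ge D_{(a,\beta)}(\wp_{a;\beta}^{u(a)})=\tfrac{2}{q}\arctan\bigl(\tfrac{q(\beta-a)}{2}\bigr),
\]
while if $\beta=a$ this is vacuous. On the remaining interval $[\beta,b]$, $u$ is $q$-concave with $u(\beta)=u(a)$, $u<u(a)$ on $(\beta,b]$, and $u(b)<u(a)$. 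Assuming first that $u$ is nonincreasing on $[\beta,b]$, Corollary~\ref{megliospez} furnishes $c\in[\beta,b)$ with $u(a)-u(b)\ge b-c$ and
\[
D_{(\beta,b)}(u)\ge(c-\beta)+\frac{(b-c)^3}{(b-c)^2+(u(a)-u(b))^2}.
\]
Adding the two estimates and applying $\arctan(x+y)\le\arctan x+y$ with $x=\tfrac{q(\beta-a)}{2}$ and $y=\tfrac{q(c-\beta)}{2}$, which gives $\tfrac{2}{q}\arctan(\tfrac{q(\beta-a)}{2})+(c-\beta)\ge\tfrac{2}{q}\arctan(\tfrac{q(c-a)}{2})=D_{(a,c)}(\wp_{a;c}^{u(a)})$, I obtain $D_{(a,b)}(u)\ge D_{(a,b)}(w_{a,c,b})$, so the conclusion holds with $\gamma:=c$.

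The main obstacle is the subcase in which $u$ is not nonincreasing on $[\beta,b]$: $q$-concavity alone does not preclude V- or W-shaped profiles, on which neither Lemma~\ref{struccenter} (which requires $u(y_1)=u(y_2)\ge u$ on $[y_1,y_2]$) nor Corollary~\ref{megliospez} (which needs monotonicity) applies directly. I plan to handle this by approximating $u$ by the piecewise parabolic sequence $u_h$ of Definition~\ref{approx}, on which any non-monotonicity reduces to finitely many maximal increasing arcs. On each such arc the combination of $q$-concavity of $u_h$ and the bound $u_h\le u(a)$ forces the rise to be mild and its subsequent descent correspondingly steep, so that iterating Lemma~\ref{struccenter} on the largest sub-bumps where $u_h$ returns to an intermediate level and then applying Corollary~\ref{megliospez} to the resulting monotone profile produces the desired comparison at the $u_h$ level; Proposition~\ref{Dconvergence} then yields the limit.

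Finally, the strict statement $\gamma\in(a,b)$ when $u$ is not strictly decreasing follows from the construction: a plateau of $u$ at level $u(a)$ forces $\beta>a$ and hence $\gamma\ge c\ge\beta>a$; a plateau of $u$ strictly below $u(a)$ produces a plateau at the left end of the concave rearrangement $u^*$ used inside Corollary~\ref{megliospez}, so that the set $A_{u^*}=\{(u^*)'_+\ge -1\}$ of Proposition~\ref{spez} has nonempty interior and the resulting threshold satisfies $c>\beta$; and a genuine non-monotonicity triggers a non-trivial Step-3 bump-flattening. In each case $\gamma>a$, as required.
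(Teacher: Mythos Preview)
Your approach for the case in which $u$ is nonincreasing on $[\beta,b]$ is correct and pleasantly direct: the inequality $\arctan(x+y)\le\arctan x+y$ is exactly what allows the plateau produced by Corollary~\ref{megliospez} to be absorbed into the adjacent parabola coming from Lemma~\ref{struccenter}. This is essentially equivalent to the paper's device of applying Lemma~\ref{struccenter} once on the larger interval $[a,\gamma]$ to the function that equals $\tilde u$ on $[a,d]$ and is constant equal to $u(a)$ on $[d,\gamma]$.

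The genuine gap is in the non-monotone subcase on $[\beta,b]$. Your plan to ``iterate Lemma~\ref{struccenter} on the largest sub-bumps where $u_h$ returns to an intermediate level and then apply Corollary~\ref{megliospez} to the resulting monotone profile'' does not work as stated. Replacing a bump $[z,\xi]$ (where $u_h(z)=u_h(\xi)$ is a \emph{local} maximum value strictly below $u(a)$) by the parabola $\wp_{z;\xi}^{u_h(\xi)}$ leaves a profile that is still non-monotone on $[z,\xi]$, since the parabola descends and re-ascends to the same level; the point $\xi$ remains a local maximum at the same height. Iterating therefore neither reduces the number of local maxima nor produces a nonincreasing profile to which Corollary~\ref{megliospez} could be applied. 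No combination of Lemma~\ref{struccenter} and Corollary~\ref{megliospez} alone yields the comparison in this situation.

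The paper closes this case by a different mechanism: exploiting the translation invariance of $D_{(a,b)}$, it builds from the piecewise-parabolic approximant $u_h$ a new function $\tilde u_h$ with the \emph{same} resistance, obtained by cutting $[a,b]$ at the ``rightward-dominating'' local maxima (the set $\mathcal{M}$) and reshuffling the pieces so that all oscillation is pushed into an initial interval $[a,d]$ on which $\tilde u_h(a)=\tilde u_h(d)=u(a)$, while $\tilde u_h$ is genuinely nonincreasing on $[d,b]$. Only after this rearrangement do Corollary~\ref{megliospez} (on $[d,b]$) and Lemma~\ref{struccenter} (on $[a,\gamma]$) apply. This rearrangement step is the missing idea in your sketch; without it, or an equivalent device, the non-monotone case does not close. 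Your final paragraph on $\gamma>a$ relies on that case as well, so it is likewise incomplete.
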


\begin{proof}
If $q=0$ we just apply Proposition \ref{spez}, obtaining the concave function $u_{a;b}^c$, defined in \eqref{spezzata}, with $c\in [a,b)$, such that $D_{(a,b)}(u)\ge D_{(a,b)}(u_{a;b}^c) $. Then we just let $\gamma=c$ and observe that in case $q=0$ we have $u_{a;b}^\gamma=w_{a,\gamma,b}$.
If $u$ is not strictly decreasing and  it is concave, then it has a flat part in a neighborhood of $a$ and we can take $c>a$. This is done by fixing $\tilde a>a$ such that $u(\tilde a)=u(a)$ and by applying Proposition \ref{spez} on $[\tilde a,b]$.
 From here on, we let $q>0$.

As did in the proof of Lemma \ref{struccenter}, we prove the result first for  $q$-concave functions $u$ that satisfy the assumptions (i.e. $u(x)\le u(a)$ on $[a,b]$, $u(a)>u(b)$) and are moreover such that $[a,b]\ni x\mapsto u(x)-\tfrac q2(x-a)(x-b)$ is piecewise linear.
This means that $u$ is piecewise parabolic on $[a,b]$, the second derivative of $u$ being equal to $q$ on each of the finitely many pieces.
Moreover, it is clear that $u$ has a finite number of local maximum points on $[a,b]$.

The main part of the proof is the following claim: there is another piecewise parabolic function $\tilde u$ with the same resistance as $u$, such that $\tilde u(a)=u(a)$, $\tilde u(b)=u(b)$, $\tilde u(x)\le \tilde u(a)$ for any $x\in [a,b]$, and moreover there exists $d\in [a,b)$ such that $\tilde u(d)=\tilde u(a)$ and $\tilde u$ is nonincreasing on $[d,b]$. Notice that the claim is directly proved if $u(a)=u(x)$ for each local maximum point $x$ of $u$ on $[a,b]$. Just let $\tilde u=u$ in this case.

 In general, let us consider the subset  of local maxima $x$ such that  $x=b$ or $u(x)>u(y)$ for any $y\in (x,b]$. More precisely, if $\tilde {\mathcal{M}}$ is the set of local maximum points of $u$ on $[a,b]$, we define
\[
\mathcal{M}:=(\tilde {\mathcal{M}}\cap \{b\})\cup \{x\in \tilde{ \mathcal{M}}\colon u(x)>u(y) \text{ for every }y\in(x,b]\}.
\]
Notice that $b$ could be a local maximum point itself, in such case it belongs to $\mathcal{M}$. We also let $x_0:=\min \mathcal{M}$ and $x^*:=\max \mathcal{M}$ (possibly $x_0=a$, $x^*=b$). If $\mathcal{M}$ is reduced to $x_0$, the claim is proved by letting $\tilde u=u$. Otherwise,
for every $x\in  \mathcal{M}\setminus\{x^*\}$ we let
\[\xi_x:=\min\left\{y\in \mathcal{M}\colon y>x\right\},\quad
 z_x:=\min\left\{y\in[x,\xi_x]\colon u(y)=u(\xi_x)\right\}.\]
We let moreover
\[
\gamma_x:=\sum\limits_{{s\in \mathcal{M} },\,{s<x}}(z_s-s),\;\;\mbox {for any $x\in \mathcal{M}$ (notice that $\gamma_{x_0}=0$)},
\]
$$\delta_*:= \sum\limits_{x\in \mathcal{M}\setminus \{x^*\}}(\xi_x-z_x)=x^*-\gamma_{x^*}-x_0.$$
We define $\tilde u:[a,b]\to\mathbb{R}$ by $\tilde u(y)=u(y)$ if $y\in [a,x_0)\cup[x^*,b]$ and, for every $x\in \mathcal{M}\setminus\{x^*\}$,
  \[\tilde u(y)=\left\{\begin{array}{lll}
     u(y+\gamma_x+z_x-x)+u(x_0)-u(\xi_x)&\, &\mbox{if $y\in [x-\gamma_x,x-\gamma_x+\xi_x-z_x)$}\\
  u(y-x_0-\delta_*-\gamma_x+x)&\, &\mbox{if $y\in [x_0+\delta_*+\gamma_x,x_0+\delta_*+\gamma_x+z_x-x)$}.
  \end{array}\right.\]
  Notice that $\tilde u$ is absolutely continuous on $[a,b]$ and that $\tilde u(a+\delta_*)=\tilde u(a)$, moreover $\tilde u$ is nonincreasing on $[a+\delta_*,b]$.
  $\tilde u$ is obtained from $u$ by translating restrictions of $u$ on a finite number of subintervals which cover $[a,b]$.  Then it is piecewise parabolic and
  by the translation invariance property of the resistance functional in  dimension one, we have, for every $x\in \mathcal{M}\setminus\{x^*\}$,
   $$
   D_{(z_x,\xi_x)}(u)=D_{(x-\gamma_x,x-\gamma_x+\xi_x-z_x)}(\tilde u), \quad  D_{(x,z_x)}(u)=D_{(x_0+\delta_*+\gamma_x ,x_0+\delta_*+\gamma_x+z_x-x)}(\tilde u).
   $$
    Therefore $D_{(a,b)}(\tilde u)=D_{(a,b)}(u)$ and the claim is proved, with $d=a+\delta_*$.

  We  apply now Corollary \ref{megliospez} to $\tilde u$ on $[d,b]$, obtaining $\gamma\in [d,b)$ \BBB such that $\gamma\ge b-\tilde u(d)+\tilde u(b)$ and  $D_{(d,b)}(\tilde u)\ge D_{(d,b)}(\tilde u_{d;b}^\gamma)$, where $\tilde u^\gamma_{d;b}$ is defined as \eqref{spezzata}, starting from $\tilde u$.  \EEE Then,  applying Lemma \ref{struccenter} on $[a,\gamma]$, \BBB since $\tilde u(d)=\tilde u(a)=u(a)$ and $\tilde u(b)=u(b)$, we get
   $$D_{(a,b)}(u)=D_{(a,b)}(\tilde u)\ge D_{(a,d)}(\tilde u)+ D_{(d,b)}(\tilde u_{d,b}^\gamma)\ge
     D_{(a,b)}(w_{a,\gamma,b}),$$
with $\gamma\ge b-u(a)+u(b)$ and $\gamma\ge d=a+\delta_*\ge a$. In particular we deduce
\begin{equation}\label{infimum}D_{(a,b)}(u)\ge \inf\left\{D_{(a,b)}(w_{a,\gamma,b})\colon \gamma\in \left[a\vee(u(b)-u(a)+b),b\right)\right\}.\end{equation}

  In order to conclude, we need to prove \eqref{infimum} for a generic $u$ satisfying the assumptions of this lemma. If $u_h$ is a sequence of piecewise parabolic approximations of $u$ constructed by means of Proposition \ref{approx}, we have $u_h(a)=u(a), u_h(b)=u(b)$ and $u_h(x)\le u(x)\le u(a)$ if $x\in [a,b]$, for any $h\in\mathbb{N}$. Therefore we may apply \eqref{infimum}  to $u_h$ and pass it to the limit, since we can use
 Proposition \ref{Dconvergence}, and since  the right hand side of \eqref{infimum} is independent of $h$.
  The map
  $$[a,b)\ni \gamma\mapsto D_{(a,b)}(w_{a,\gamma,b})= \frac{(b-\gamma)^3}{(b-\gamma)^2+(u(a)-u(b))^2}+\frac{2}{q}\arctan{(\tfrac q2(\gamma-a))}$$
  is however smooth and strictly increasing in a left neighborhood of $b$, so that its infimum is realized and belongs to $[a\vee(u(b)-u(a)+b),b)$.
In other words, there is $\gamma\in[a,b)$ such that $\gamma\ge u(b)-u(a)+b$ and  $D_{(a,b)}(u)\ge D_{(a,b)}(w_{a,\gamma,b})$, as desired.

\BBB
Eventually, we prove the last statement, which is in fact obvious if $u(\tilde a)=u(a)$ for some $\tilde a>a$. We assume therefore that $u$ is not strictly decreasing on $[a,b]$ and also that $u(y)<u(a)$ for any $y\in(a,b]$. Then there exists a local maximum point for $u$ in $(a,b]$ that we denote by $a_1$, and we let $\delta_0\in (0,a_1-a)$ be small enough, such that $u(y)\le u(a_1)$ for any $y\in(a_1-\delta_0,a_1)$. We take advantage of Remark \ref{nonuniform} for approximating $u$, by taking a sequence $u_h$ of piecewise parabolic approximations such that $u_h(a_1)=u(a_1)$ for any $h\in\mathbb{N}$. Notice that by construction  $u_h\le u$, thus we have $u_h(y)<u(a)$ for any $y\in (a,b]$, $a_1$ is a local maximum point for $u_h$ and in particular  \begin{equation}\label{spazio}u_h(y)\le u_h(a_1)\quad \mbox{for any}\quad y\in(a_1-\delta_0, a_1),\end{equation} for any $h\in\mathbb{N}$. Now we fix $h$ and for the function $u_h$ we define  $\mathcal{M}$, $x^*$, $x_0$, $d$, $\delta_*$  as above, omitting for simplicity the dependence on $h$. Since $u_h<u(a)$ on $(a,b]$ we readily have  $a=x_0\in\mathcal{M}$. We take the largest element $x$ of $\mathcal{M}$ which is strictly smaller than $a_1$, and since $a_1$ is a local maximum point for $u_h$ (and the rightmost local maximum of $u_h$ necessarily belongs to $\mathcal{M}$), we see that  $x<x^*$, i.e. $x\in\mathcal{M}\setminus\{x^*\}$. Then, by definition of $\xi_x$ above, we get $\xi_x\ge a_1>x$ and $u_h(\xi_x)\ge u_h(a_1)$. Moreover, by the definition of $z_x$ above,  thanks to \eqref{spazio} and to the intermediate value theorem, we get $\xi_x-z_x\ge \delta_0$, implying $\delta_*\ge \delta_0$, i.e., $d\ge a+\delta_0$. Since $\delta_0$ does not depend on $h$, when applying the previous part of this proof we get the improved estimate $D_{(a,b)}(u)\ge \inf\{D_{(a,b)}(w_{a,\gamma,b})\colon \gamma\in [(a+\delta_0)\vee(u(b)-u(a)+b),b)\}$, where the infimum is realized, yielding the result.
\end{proof}
\EEE


\subsection*{Conclusion of the one-dimensional case}

We first combine Lemma \ref{struccenter} and Lemma \ref{strucside} to obtain the following
\begin{proposition}\label{rettaparabolaretta}
Let $M>0$, $q\ge 0$, $u\in \mathcal{K}_q^M$  and $M\ge m:=\max\{u(x)\colon x\in [-1,1]\}$.
Then there exist $\alpha\in [0,m]$, $\beta\in[0,m]$ and $a,b\in\mathbb{R}$, with
\[-1\le a\le \min\{1,-1+m\},\quad\max\{-1,1-m\}\le b\le 1,\quad a\le b,\]  such that the $q$-concave function on $[-1,1]$ defined by
\begin{equation}\label{mista}\hat u(x):= \left\{
\begin{array}{lcl}\vspace{0.15cm}
\dfrac{m-\alpha}{a+1}(x+1)+\alpha&\text{if}&x\in[-1,a)\\\vspace{0.15cm}
\dfrac q2(x-a)(x-b)+m&\text{if}&x\in [a,b]\\
\dfrac{\beta-m}{1-b}(x-b)+m&\text{if}&x\in(b,1]
\end{array}
\right.\end{equation}
satisfies $D_{(-1,1)}(u)\ge D_{(-1,1)}(\hat u)$.
\end{proposition}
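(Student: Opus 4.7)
The plan is to locate the set where $u$ attains its maximum $m:=\max_{[-1,1]}u$ and use Lemma~\ref{struccenter} there, then apply Lemma~\ref{strucside} (possibly after reflection) on each of the two side intervals where $u<m$, and finally glue the resulting parabolic plateaus into a single one via a second application of Lemma~\ref{struccenter}.

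Concretely, I set $p_-:=\min\{x\in[-1,1]:u(x)=m\}$ and $p_+:=\max\{x\in[-1,1]:u(x)=m\}$ (both exist by continuity of $u$), and $\alpha:=u(-1)$, $\beta:=u(1)$, which lie in $[0,m]$. On the interval $[p_-,p_+]$ one has $u\le u(p_-)=u(p_+)=m$, so Lemma~\ref{struccenter} yields $D_{(p_-,p_+)}(u)\ge D_{(p_-,p_+)}(\wp^{m}_{p_-;p_+})$. If $p_+<1$ then $\beta<m$ by maximality of $p_+$, and Lemma~\ref{strucside} applied on $[p_+,1]$ produces $\gamma_R\in[p_+,1)$ with $m-\beta\ge 1-\gamma_R$ such that the resistance of $u$ on $[p_+,1]$ is bounded below by that of the parabolic plateau $\wp^{m}_{p_+;\gamma_R}$ on $[p_+,\gamma_R]$ followed by the linear segment joining $(\gamma_R,m)$ to $(1,\beta)$; if $p_+=1$ I set $\gamma_R:=1$ and skip this step. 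Symmetrically, when $p_->-1$, I apply Lemma~\ref{strucside} to the reflected function $v(x):=u(-x)$ on $[-p_-,1]$ and undo the reflection to obtain $\gamma_L\in(-1,p_-]$ with $m-\alpha\ge 1+\gamma_L$ for which $u$ on $[-1,p_-]$ has resistance at least that of the linear segment from $(-1,\alpha)$ to $(\gamma_L,m)$ followed by the parabolic plateau $\wp^{m}_{\gamma_L;p_-}$ on $[\gamma_L,p_-]$; if $p_-=-1$ I set $\gamma_L:=-1$ and skip. The one-sided estimates $\gamma_L\le m-1$ and $\gamma_R\ge 1-m$ then follow from $\alpha,\beta\ge 0$.

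The crucial step is to concatenate the three parabolic plateaus $\wp^{m}_{\gamma_L;p_-}$, $\wp^{m}_{p_-;p_+}$, $\wp^{m}_{p_+;\gamma_R}$ into a single profile $f$ on $[\gamma_L,\gamma_R]$ and to verify that $f$ is $q$-concave: its concave companion $f-\tfrac{q}{2}x^2$ is piecewise affine with successive slopes $-\tfrac{q}{2}(\gamma_L+p_-)$, $-\tfrac{q}{2}(p_-+p_+)$, $-\tfrac{q}{2}(p_++\gamma_R)$, which are strictly decreasing since $\gamma_L\le p_-\le p_+\le \gamma_R$. Since moreover $f(\gamma_L)=f(\gamma_R)=m$ and $f\le m$ on $[\gamma_L,\gamma_R]$, Lemma~\ref{struccenter} applies once more and gives $D_{(\gamma_L,\gamma_R)}(f)\ge D_{(\gamma_L,\gamma_R)}(\wp^{m}_{\gamma_L;\gamma_R})$.

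Setting $a:=\gamma_L$, $b:=\gamma_R$ and assembling all the inequalities above produces the function $\hat u$ of \eqref{mista} with $D_{(-1,1)}(u)\ge D_{(-1,1)}(\hat u)$; the required bounds $-1\le a\le\min\{1,-1+m\}$ and $\max\{-1,1-m\}\le b\le 1$ fall out of the constraints obtained in the applications of Lemma~\ref{strucside}. The main obstacle I anticipate is the $q$-concavity check at the gluing points $p_\pm$ in the second application of Lemma~\ref{struccenter}; the slope computation above resolves it.
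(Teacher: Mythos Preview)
Your proof is correct and follows essentially the same strategy as the paper's: apply Lemma~\ref{strucside} (and its reflection) on the two side intervals to produce linear pieces with parabolic plateaus attached, then invoke Lemma~\ref{struccenter} on the central interval to merge the plateaus into a single parabola. The only differences are cosmetic: the paper picks a single maximum point $x^*$ and applies Lemma~\ref{strucside} directly on $[x^*,1]$ and $[-1,x^*]$, whereas you work with both endpoints $p_-,p_+$ of the argmax and insert an extra (harmless) application of Lemma~\ref{struccenter} on $[p_-,p_+]$; you are also more explicit about the $q$-concavity verification at the gluing points, which the paper leaves implicit. One small imprecision: the successive slopes of $f-\tfrac q2 x^2$ are only \emph{nonincreasing} in general (equalities occur when $\gamma_L=p_-$ or $p_+=\gamma_R$), but that is all that is needed for concavity.
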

\begin{proof}
  We can assume wlog that $u$ is continuous up to the boundary of $[-1,1]$, and we let $\alpha:=u(-1)$ and $\beta:=u(1)$. We take a maximum point $x^*\in[-1,1]$ for $u$. We apply Lemma \ref{strucside} on $[x^*,1]$ and its reflected version on $[-1,x^*]$, finding two points $a,b\in [-1,1]$, with $-1\le a\le x^*\le b\le 1$, such that $D_{(a,b)}(u)\ge D_{(a,b)}(\tilde u)$, where $\tilde u$, by this application of Lemma \ref{strucside}, is made of two straight lines on $[-1,a)$ and $(b,1]$, with slope in modulus greater than or equal to $1$,   and moreover $\tilde u(a)=\tilde u(b)=m$. We change $\tilde u$ with $x\mapsto m+ \tfrac q2(x-a)(x-b)$ on $[a,b]$, and the result follows by means of Lemma \ref{struccenter}.  All the degenerate cases $a=b$, $a=-1$, $b=1$, $a=b=1$, $a=b=-1$ are possible (for instance if $u\equiv m$ on $[-1,1]$, we are just applying Lemma \ref{struccenter}).
\end{proof}
For $M>0$, $q\ge 0$, the resistance of $\hat u$ in 	\eqref{mista} is given explicitly by $D_{(-1,1)}(\hat u)=\Gamma(a,b,m,\alpha,\beta)$, where, if $q>0$,
\begin{equation*}\label{Gamma}\Gamma(a,b,m,\alpha,\beta):=\dfrac{(a+1)^3}{(a+1)^2+(m-\alpha)^2}
+\dfrac2q\arctan\left(\dfrac{q}2(b-a)\right)+\dfrac{(1-b)^3}{(1-b)^2+(\beta-m)^2}\end{equation*}
and where the parameters  $(a,b,m,\alpha,\beta)$ vary in the set
\begin{multline*}\label{T}\mathcal{T}:=\{(a,b,m,\alpha,\beta)\colon
-1\le a\le \min\{1,-1+m\},\, \max\{-1,1-m\}\le b\le 1,\\\, a\le b,\,
0\le m \le M,\,0\le\alpha\le m,\,0\le\beta\le m\}.\end{multline*}
If $q=0$ the $\arctan$ term simply becomes $b-a$.

With the next three propositions we solve the problem $\min_{\mathcal{T}}\Gamma$, for $q\in [0,1]$ and $2M\ge q$.
\begin{proposition}\label{firstreduction}
If $(a,b,m,\alpha,\beta)$ is a minimizer of $\Gamma$ on $\mathcal{T}$, then $\alpha=\beta=0$, $m=M$, $-a=b=:\gamma$ and $\max\{0,1-M\}\le\gamma<1$.
\end{proposition}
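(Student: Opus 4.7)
The plan is to eliminate the five parameters one by one, using monotonicity in $\alpha,\beta,m$ and strict convexity in the pair $(a,b)$.

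First, the dependence of $\Gamma$ on $\alpha$ is only through the first summand $(a+1)^{3}/((a+1)^{2}+(m-\alpha)^{2})$, which is a nondecreasing function of $\alpha\in[0,m]$ (strictly if $a>-1$); a symmetric argument applies to $\beta$ via the third summand. So at any minimizer one may take $\alpha=\beta=0$, and the choice is unique as soon as $a>-1$ and $b<1$, which will be confirmed at the end. With $\alpha=\beta=0$, the functional $\Gamma$ is visibly nonincreasing in $m\in[0,M]$, strictly unless both $a=-1$ and $b=1$; hence $m=M$.

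Next, I would change variables to $s:=a+1$ and $t:=1-b$, so that $s,t\in[0,\min\{M,2\}]$ by admissibility, and
\[
\Gamma = f(s)+f(t)+\tfrac{2}{q}\arctan\!\bigl(\tfrac{q}{2}(2-s-t)\bigr),
\]
where $f(x):=x^{3}/(x^{2}+M^{2})$ (with the middle term being simply $2-s-t$ in the case $q=0$). The middle term depends only on $s+t$, so for fixed $s+t$ one has to minimize $f(s)+f(t)$. A direct computation yields
\[
f''(x)=\dfrac{2M^{2}x(3M^{2}-x^{2})}{(x^{2}+M^{2})^{3}},
\]
which is strictly positive on $(0,M\sqrt{3})$. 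Since $\min\{M,2\}\le M\sqrt{3}$ in all cases (trivially if $M\le 2$, and because $M\sqrt 3>2$ if $M\ge 2$), $f$ is strictly convex on the admissible interval. By strict convexity and the symmetry of $f(s)+f(t)$ in $(s,t)$, the minimum on any segment $\{s+t=c\}\cap [0,\min\{M,2\}]^{2}$ is attained uniquely at the midpoint $s=t=c/2$, which is always feasible since $c=s+t\le 2\le 2\min\{M,2\}$. This forces $a+1=1-b$, i.e., $-a=b=:\gamma$.

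Finally, the bounds on $\gamma$ follow from the remaining constraints: $a\ge -1$ and $b\le 1$ give $\gamma\le 1$; $a\le b$ gives $\gamma\ge 0$; and $b\ge\max\{-1,1-M\}$ gives $\gamma\ge\max\{0,1-M\}$. To exclude $\gamma=1$, I would set $\gamma=1-\eta$ in the reduced objective, obtaining $\eta\mapsto 2\eta^{3}/(\eta^{2}+M^{2})+\tfrac{2}{q}\arctan(q(1-\eta))$, whose right derivative at $\eta=0$ equals $-2/(1+q^{2})<0$, so $\gamma=1$ cannot be a local minimum; hence $\gamma<1$, which also retroactively confirms $a>-1$ and $b<1$, making the choices $\alpha=\beta=0$ and $m=M$ uniquely optimal. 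The main technical point is the convexity computation for $f$; all other reductions rely only on the explicit algebraic form of $\Gamma$ and on boundary asymptotics.
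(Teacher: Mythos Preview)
Your proof is correct and uses essentially the same ingredients as the paper: monotonicity in $\alpha,\beta,m$, strict convexity of $x\mapsto x^3/(x^2+M^2)$ for the symmetrization $s=t$, and a boundary-derivative argument to rule out $a=-1$ (equivalently $\gamma=1$); the only difference is order, since the paper excludes $a=-1,\ b=1$ first and then reduces, whereas you reduce first and exclude $\gamma=1$ at the end, recovering uniqueness retroactively. One small slip: the feasibility claim ``$c=s+t\le 2\le 2\min\{M,2\}$'' fails when $M<1$; the correct justification is simply $s,t\le\min\{M,2\}\Rightarrow c\le 2\min\{M,2\}$, so $c/2$ is admissible.
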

\begin{proof}
We first notice that if $(a,b,m,\alpha,\beta)\in\mathcal{T}$ is a point of minimum for $\Gamma$, then both $a\neq-1$ and $b\neq 1$. Since the proofs are similar, let's see, for example, that $a\ne -1$, which is equivalent to show that every $(-1,b,m,\alpha,\beta)\in\mathcal{T}$ is not a point of minimum for $\Gamma$ on $\mathcal{T}$.
Let $\max\{-1,1-m\}\le b\le 1,\,
0\le m \le M,\,0\le\alpha\le m,\,0\le\beta\le m$ be fixed. Then\[\lim\limits_{a\to-1^+}\frac{\partial\Gamma}{\partial a} (a,b,m,\alpha,\beta)=-\dfrac{4}{4+q^2(b+1)^2}<0\] and the thesis is proved for $b\in(-1,1]$. On the other hand it is easily seen that $(-1,-1,m,\alpha,\beta)$ is a local maximum for the function $a\mapsto \Gamma(a,a,m,\alpha,\beta)$, then the proof is done.
So, from now on, we will assume both $a\neq-1$ and $b\neq 1$.

Since the function $m\mapsto \Gamma(a,b,m,\alpha,\beta)$ is decreasing on $[0,M]$, we have that
\begin{equation*}\label{maxx}\Gamma(a,b,m,\alpha,\beta)\ge\Gamma(a,b,M,\alpha,\beta)\end{equation*} for every $(a,b,m,\alpha,\beta)\in\mathcal{T}$, with strict inequality if $m<M$. Moreover since both the functions $\alpha\mapsto \Gamma(a,b,M,\alpha,\beta) $ and $\beta\mapsto \Gamma(a,b,M,0,\beta)$ are non-decreasing on $[0,M]$ we have
\begin{equation*}\label{zeroestr}
\Gamma(a,b,M,\alpha,\beta)\ge\Gamma(a,b,M,0,0),
\end{equation*}
with strict inequality if $\alpha>0$ or $\beta>0$.
Finally, since the function $[0,M]\ni \sigma\mapsto \sigma^3(M^2+\sigma^2)^{-2} $ is convex, and taking into account that both $a+1,1-b\in [0,M]$, the following holds:
\[\Gamma(a,b,M,0,0)\ge \Gamma\left(\frac{a-b}{2},\frac{b-a}{2},M,0,0\right),\]
with strict inequality if $a\neq -b$.
In conclusion, in order to minimize $\Gamma$ on $\mathcal{T}$ we can restrict to $m=M$, $\alpha=\beta=0$, $b=-a=:\gamma\ge0$, $\max\{0,1-M\}\le\gamma<1.$
\end{proof}

\begin{proposition}\label{prel}Let $M>0$ and $q\in[0,1]$ such that $2M\ge q$. Let $\varphi_{M;q}\colon[0,1]\to \mathbb{R}$ be the function defined by \begin{equation}\label{phimq}\varphi_{M;q}(\gamma):=M^4-M^2(1-\gamma)^2-q^2\gamma^2(1-\gamma)^4-3M^2q^2\gamma^2(1-\gamma)^2.\end{equation}
 Then $\varphi_{M;q}$ is strictly increasing  on $[0,1]$.
\end{proposition}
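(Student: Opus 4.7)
The plan is to verify that $\varphi'_{M;q}(\gamma)>0$ for every $\gamma\in[0,1)$, so that the continuous function $\varphi_{M;q}$ is strictly increasing on the closed interval. Differentiating \eqref{phimq} term by term and factoring the common factor $(1-\gamma)$ out of each of the three resulting contributions yields
\[
\varphi'_{M;q}(\gamma)=2(1-\gamma)\,\psi(\gamma),\qquad \psi(\gamma):=M^{2}-q^{2}\gamma(1-\gamma)^{2}(1-3\gamma)-3M^{2}q^{2}\gamma(1-2\gamma).
\]
Since $2(1-\gamma)>0$ on $[0,1)$, everything reduces to showing $\psi(\gamma)>0$ on $[0,1)$.

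The key observation is that $\psi$ is affine in the variable $M^{2}$,
\[
\psi(\gamma)=M^{2}\bigl[1-3q^{2}\gamma(1-2\gamma)\bigr]-q^{2}\gamma(1-\gamma)^{2}(1-3\gamma),
\]
with a coefficient that is strictly positive on $[0,1]$ whenever $q\le 1$: indeed $\gamma(1-2\gamma)\le 1/8$ on $[0,1]$, hence $3q^{2}\gamma(1-2\gamma)\le 3/8<1$. Consequently $\psi$ is strictly increasing in $M^{2}$, and under the high-profile constraint $M^{2}\ge q^{2}/4$ its minimum over admissible profiles is attained at $M^{2}=q^{2}/4$. Substituting this worst-case value reduces the question to a single-parameter inequality,
\[
F(\gamma):=1-3q^{2}\gamma(1-2\gamma)-4\gamma(1-\gamma)^{2}(1-3\gamma)>0\qquad\text{for every }\gamma\in[0,1).
\]

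The inequality $F>0$ is then handled by splitting on the signs of $(1-2\gamma)$ and $(1-3\gamma)$. For $\gamma\in[1/2,1)$ both subtracted terms are nonnegative, so $F\ge 1$ trivially. For $\gamma\in[1/3,1/2]$ the second subtracted term is nonnegative and the first is bounded by $3q^{2}/9\le 1/3$, giving $F\ge 2/3$. The genuinely delicate regime is $\gamma\in[0,1/3]$, where both subtracted terms are strictly positive; here the plan is to use the crude bounds $q^{2}\le 1$ and $(1-\gamma)^{2}\le 1$ together with $(1-2\gamma)\ge 0$, $(1-3\gamma)\ge 0$ to obtain
\[
F(\gamma)\ge 1-3\gamma(1-2\gamma)-4\gamma(1-3\gamma)=1-7\gamma+18\gamma^{2},
\]
a quadratic with negative discriminant ($49<72$), hence strictly positive on all of $\mathbb{R}$.

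The main obstacle is spotting the affine-in-$M^{2}$ reduction: without it one is forced to treat $M$ and $q$ as two intertwined parameters, which obscures the role of each design constraint. Once this reduction is in hand, the single-shock condition $q\le 1$ and the high-profile condition $2M\ge q$ enter independently and transparently, and the residual one-variable inequality breaks up into three elementary sub-cases.
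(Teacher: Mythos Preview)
Your proof is correct. Both you and the paper compute $\varphi'_{M;q}(\gamma)=2(1-\gamma)\psi(\gamma)$ and then show $\psi>0$ on $[0,1)$, so the overall strategy is the same; the difference lies only in how the lower bound on $\psi$ is obtained. The paper rewrites $\psi$ as $M^{2}+q^{2}\alpha(\gamma)+2q^{2}\gamma^{2}(1-\gamma)^{2}+3M^{2}q^{2}\beta(\gamma)$ with $\alpha(\gamma)=-\gamma(1-\gamma)^{3}$ and $\beta(\gamma)=2\gamma^{2}-\gamma$, bounds $\alpha$ and $\beta$ below by their minima $-27/256$ and $-1/8$, drops the nonnegative term, and then uses $q\le 1$ and $2M\ge q$ in sequence to get a uniform positive lower bound. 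Your reduction via the affine-in-$M^{2}$ structure is arguably cleaner: it isolates the role of the high-profile constraint $2M\ge q$ by pushing $M^{2}$ down to $q^{2}/4$ once and for all, leaving a one-parameter inequality handled by a short sign-based case split. The paper's approach avoids the case analysis at the cost of slightly more opaque constants; yours makes the dependence on the two design constraints more transparent.
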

\begin{proof}
If $q=0$ the result is obvious. Assume $q>0$.
We first consider the function $\psi_{M;q}	\colon[0,1]\to\mathbb{R}$ defined by
\[\psi_{M;q}(\gamma):=M^2-q^2\gamma(1-\gamma)^3+2q^2\gamma^2(1-\gamma)^2-3M^2q^2\gamma(1-\gamma)+3M^2q^2\gamma^2,\]
 and we observe that
\begin{equation}\label{factor}\varphi_{M;q}'(\gamma)=2(1-\gamma)\psi_{M;q}(\gamma)\quad \text{for every }\gamma\in[0,1].\end{equation}
 Let now $\alpha,\beta\colon [0,1]\to\mathbb{R}$ be the functions defined by
\[\alpha(\gamma):=-\gamma(1-\gamma)^3\quad\text{and }\quad \beta(\gamma):=2\gamma^2-\gamma.\] It is easy to check that \[\min\limits_{[0,1]}\alpha=\alpha\left(\tfrac14\right)=-\tfrac{27}{256},\quad \min\limits_{[0,1]}\beta=\beta\left(\tfrac14\right)=-\tfrac18.\]
Then, taking into account that $q\in (0,1]$ and $2M\ge q$,  we have
\begin{align*}\psi_{M;q}(\gamma)&=M^2+q^2 \alpha(\gamma)+2q^2\gamma^2(1-\gamma)^2+3M^2q^2\beta(\gamma)
\\&
\ge M^2-\tfrac{27}{256}q^2-\tfrac38M^2q^2
\ge\tfrac58 M^2-\tfrac{27}{256}q^2
\ge\tfrac{13}{64}q^2 >0\end{align*} for every $\gamma\in [0,1].$ Therefore, from \eqref{factor} we conclude.
\end{proof}
\begin{proposition}\label{endprop}
Let $M>0$ and $q\in[0,1]$ such that $2M\ge q$. Let $R_{M;q}\colon[0,1]\to \mathbb{R}$ be the function defined by \eqref{RMQ}.
\begin{itemize}
\item[(i)] If $M\in (0,1)$ then there exists a unique $\gamma^\ast_{M,q}\in (0,1)$ such that  \[\min\limits_{\gamma\in[0,1]}R_{M;q}(\gamma)=R_{M;q}(\gamma^\ast_{M,q}).\]
\item[(ii)]If $M\ge 1$, then $\min\limits_{\gamma\in[0,1]}R_{M;q}(\gamma)=R_{M;q}(0)=\frac{1}{1+M^2}$, and $0$ is the unique minimizer.
\end{itemize}
\end{proposition}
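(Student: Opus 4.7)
The plan is to reduce the problem to an analysis of the sign of $\varphi_{M;q}$, which is already controlled by Proposition \ref{prel}. First, I would compute the derivative: writing $f(\gamma)=(1-\gamma)^3/(M^2+(1-\gamma)^2)$, a straightforward calculation (using the substitution $u=1-\gamma$) gives
\[
f'(\gamma)=-\frac{(1-\gamma)^2\bigl(3M^2+(1-\gamma)^2\bigr)}{\bigl(M^2+(1-\gamma)^2\bigr)^2}.
\]
Hence, for $q>0$,
\[
R'_{M;q}(\gamma)=\frac{2}{1+q^2\gamma^2}-\frac{2(1-\gamma)^2\bigl(3M^2+(1-\gamma)^2\bigr)}{\bigl(M^2+(1-\gamma)^2\bigr)^2},
\]
and reducing to a common denominator shows that the sign of $R'_{M;q}(\gamma)$ coincides with the sign of the numerator
\[
\bigl(M^2+(1-\gamma)^2\bigr)^2-(1+q^2\gamma^2)(1-\gamma)^2\bigl(3M^2+(1-\gamma)^2\bigr).
\]
Expanding yields precisely $\varphi_{M;q}(\gamma)$ from \eqref{phimq}. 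The case $q=0$ is analogous (and in fact obtained in the limit: $2\gamma$ replaces $\tfrac{2}{q}\arctan(q\gamma)$ but the derivative of the first summand is still $2$, corresponding to $1/(1+q^2\gamma^2)\big|_{q=0}$).

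Next, I would invoke Proposition \ref{prel}: $\varphi_{M;q}$ is strictly increasing on $[0,1]$. Evaluating at the endpoints gives
\[
\varphi_{M;q}(0)=M^4-M^2=M^2(M^2-1),\qquad \varphi_{M;q}(1)=M^4>0.
\]
If $M\ge 1$, then $\varphi_{M;q}(0)\ge 0$, so by strict monotonicity $\varphi_{M;q}>0$ on $(0,1]$; consequently $R'_{M;q}\ge 0$ on $[0,1]$ with strict inequality on $(0,1]$, so $R_{M;q}$ is strictly increasing and the unique minimum is at $\gamma=0$, with value $R_{M;q}(0)=\tfrac{2}{1+M^2}$. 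This proves (ii).

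If instead $M\in(0,1)$, then $\varphi_{M;q}(0)<0<\varphi_{M;q}(1)$; by the intermediate value theorem and strict monotonicity, $\varphi_{M;q}$ has a unique zero $\gamma^{\ast}_{M,q}\in(0,1)$. The sign analysis then yields $R'_{M;q}<0$ on $[0,\gamma^{\ast}_{M,q})$ and $R'_{M;q}>0$ on $(\gamma^{\ast}_{M,q},1]$, so $R_{M;q}$ is strictly decreasing on $[0,\gamma^{\ast}_{M,q}]$ and strictly increasing on $[\gamma^{\ast}_{M,q},1]$, giving a unique minimizer at $\gamma^{\ast}_{M,q}$ and proving (i). There is no real obstacle once Proposition \ref{prel} is in hand; the only mildly delicate point is to verify that, after clearing denominators, the sign of $R'_{M;q}$ really is governed by $\varphi_{M;q}$, which is just the algebraic identity above.
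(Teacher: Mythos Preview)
Your proposal is correct and follows essentially the same approach as the paper: compute $R'_{M;q}$, observe that its sign coincides with that of $\varphi_{M;q}$, and then invoke Proposition \ref{prel} together with the endpoint values $\varphi_{M;q}(0)=M^2(M^2-1)$ and $\varphi_{M;q}(1)=M^4$ to settle both cases. Your value $R_{M;q}(0)=\tfrac{2}{1+M^2}$ is in fact the correct one; the $\tfrac{1}{1+M^2}$ appearing in the statement is a typo.
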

\begin{proof} We first notice that
\[R_{M;q}'(\gamma)=\dfrac{\varphi_{M;q}(\gamma)}{\left(1+q^2\gamma^2\right)[M^2+(1-\gamma)^2]^2}\]
for every $\gamma\in [0,1]$, $\varphi_{M;q}(\gamma)$ being the function defined in \eqref{phimq}. Then the sign of $R_{M;q}'$ coincides with the sign of $\varphi_{M;q}$.

\noindent (i) If $M\in (0,1)$ then $\varphi_{M;q}(0)=M^2(M^2-1)<0$ and $\varphi_{M;q}(1)=M^4>0$. Then, by Proposition  \ref{prel}, there exists a unique $\gamma^{\ast}_{M;q}\in (0,1)$ such that  \[R_{M;q}'(\gamma^{\ast}_{M;q})=\varphi_{M;q}(\gamma^{\ast}_{M;q})=0\] and $R_{M;q}'$ is negative on $[0,\gamma^{\ast}_{M;q})$, while it is positive on $(\gamma^{\ast}_{M;q},1]$. Therefore $\gamma^{\ast}_{M;q}$ is the unique point of minimum of $R_{M;q}$ on [0,1].

\noindent (ii) If $M\ge 1$ then  $\varphi_{M;q}(0)=M^2(M^2-1)\ge 0$ and $\varphi_{M;q}(1)=M^4>0$.  By Proposition \ref{prel}, both $\varphi_{M;q}$ and $R_{M;q}$ are strictly increasing on $[0,1]$, then   \[\min\limits_{[0,1]}R_{M;q}=R_{M;q}(0)=\frac{1}{1+M^2}\]
and $0$ is the unique minimizer of $R_{M;q}$  on $[0,1]$.
\end{proof}

 \noindent\textbf{Proof of Theorem \ref{main1d}}
\begin{proof}
Let $M>0$, $q\in [0,1]$ and $M\ge 2q$.
%
Assume that $u$ is a solution to \eqref{qconcprob}. We may assume that it is not constant and continuous up to the boundary. Let $m\in (0,M]$ be the maximal value of $u$ on $[-1,1]$, and let $$ \xi=\max\{x\in[-1,1]: u(x)=m\},\quad  \eta=\min\{x\in[-1,1]: u(x)=m\}.$$

We claim that $m=M$, $-1<\eta\le\xi<1$ and $u(\pm 1)=0$. If for instance $\eta=-1$ we apply Lemma \ref{strucside} on $[-1,1]$ (reduced to Lemma \ref{struccenter} if $\xi=1$), yielding a competitor of the form of \eqref{mista}. It is not  optimal, as a consequence of Proposition \ref{firstreduction}. This is a contradiction.
Similarly, there holds $\xi<1$. If $m<M$, $u(-1)>0$ or $u(1)> 0$, still we easily have a contradiction by constructing $\hat u$ of the form of \eqref{mista} with $\hat u(\pm1)=u(\pm 1)$, $\max_{[-1,1]} \hat u=m$, and $D_{(-1,1)}(u)\ge D_{(-1,1)}(\hat u)$ (see Proposition \ref{rettaparabolaretta}). But then Proposition \ref{firstreduction} shows that $\hat u$ is non optimal. The claim is proved.

By Lemma \ref{struccenter}, $u$ coincides with $\wp_{\eta;\xi}^M$ on $[\eta,\xi]$, and  the second claim is that $u$ is strictly decreasing on $[\xi,1]$. Indeed, 
if it is not the case we may define $ u_*\in\mathcal{K}_q^M$ by
\begin{equation*}u_*(y):=\left\{\begin{array}{lcl}
u(y)&\text{if}&y\in[-1,\xi)\\
w_{\xi,\zeta,1}(y)&\text{if}&y\in[\xi,1],
\end{array}
\right.\end{equation*}
where $w_{\cdot,\cdot,\cdot}$ is defined in \eqref{ugamma}. Lemma \ref{strucside} shows that $D_{(-1,1)}(u)\ge D_{(-1,1)}(u_*)$ for a suitable  $\zeta\in(\xi,1)$. However we have a contradiction as $u_*$ is not a minimizer, since we can decrease its resistance, in an admissible way, by applying Lemma \ref{struccenter} on $[\eta,\zeta]$. The second claim is proved.

\BBB
 The third claim is that a.e. on $(\xi,1)$ the slope of $u$ is not greater than $-1$. Indeed, suppose by contradiction that there is a positive measure subset of $(\xi,1)$ where $u'>-1$. We apply Proposition \ref{rearr} and Remark \ref{plus} to $u$ on $[\xi,1]$, obtaining a concave function on such interval, with $u'>-1$ a.e. on a subinterval $(\xi,\xi')$, $\xi'>\xi$, and leaving the resistance unchanged. Then we apply Proposition \ref{spez}, obtaining an admissible competitor (up to a vertical translation) with not larger resistance and a flat part on a suitable interval $(\xi,\xi'')$, $\xi''>\xi$. This is a contradiction, because the latter competitor does not have minimal resistance, again its resistance can be improved by applying Lemma \ref{struccenter} on $[\eta, \xi'']$. This proves the third claim.
   \EEE

 The same reasoning applies on $[-1,\eta]$, i.e.  $u$ is strictly increasing on $[-1,\eta]$ with slope a.e. greater than or equal to $1$. The slope of $u$ is in fact constant on $[-1,\eta]$, and on $[\xi,1]$ as well, otherwise Jensen inequality, owing to the strict convexity of the map $t\mapsto \tfrac1{1+t^2}$ for $|t|\ge 1$ would yield a contradiction. For the same reason, as seen in the proof of Proposition \ref{firstreduction}, the two slopes are opposite.

Summing up, if  $u$ is a solution than it has the form of $\hat u$ from \eqref{mista}, with $\alpha=\beta=0, \ m=M,\ a=\eta, \ \xi=b,\ \xi=-\eta=:\gamma$, and $\gamma\in [\max\{0,1-M\},1)$.
 However, minimization among profiles of this particular form reduces to minimize the function $R_{M;q}$, defined in \eqref{RMQ},
on the interval $[\max\{0,1-M\},1).$ But Proposition \ref{endprop} shows that  there is a unique minimizer $\gamma^*$ of $R_{M;q}$ on $[0,1]$, satisfying  in particular $\gamma^*\in[\max\{0,1-M\},1)$,   $\gamma^*=0$ if $M\ge 1$ and $\gamma^*\in (0,1)$ if $M\in(0,1)$.   Notice that $u_{M;q}\in \mathcal{K}_q^M$, thanks to the assumption $M\ge 2q$.
\end{proof}

\section{The radial two-dimensional case}\label{2dsec}
For $0\le a\le b$  and locally absolutely continuous functions $u$ on $(a,b)$, we will use the notation
\begin{equation*}\label{energies}
 \mathscr{D}_{(a,b)}(u):=\int_a^b\frac{r\,dr}{1+(u'(r))^2}\end{equation*}
 {and in case  $a=0$} we shall also write $\mathscr{D}_b(u):=\mathscr{D}_{(0,b)}(u).$

 As for the one-dimensional case, the proof of Theorem \ref{radth} requires several preliminary results, the first of which takes the place of Proposition \ref{changevar}.
\begin{proposition}[\bf Radial parallelogram rule]\label{inrad} Let $q\ge 0$. Let $\alpha,\beta$ be such that $0\le\alpha\le\beta$. Then
\begin{equation*}\int\limits_\alpha^\beta\dfrac{r\,dr}{1+q^2(r-\beta)^2}\ge\int\limits_\alpha^\beta\dfrac{r\,dr}{1+q^2(r-\alpha)^2}\end{equation*}
and if $q>0$ equality holds if and only if $\alpha=\beta$.
\end{proposition}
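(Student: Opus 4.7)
The plan is to reduce the inequality to a single signed integral via a reflection substitution, and then recognize positivity through a monotonicity/Chebyshev-type argument. The case $q=0$ is trivial (both sides coincide), and if $\alpha=\beta$ both integrals vanish, so I focus on the case $q>0$ and $\alpha<\beta$ throughout.

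First, I apply the involutive substitution $s=\alpha+\beta-r$ to the integral on the right-hand side. Since $(r-\alpha)^2=(s-\beta)^2$ and $r=\alpha+\beta-s$, this gives
\[
\int_\alpha^\beta\frac{r\,dr}{1+q^2(r-\alpha)^2}=\int_\alpha^\beta\frac{(\alpha+\beta-s)\,ds}{1+q^2(s-\beta)^2}.
\]
Subtracting from the left-hand side, the inequality we must prove becomes equivalent to
\[
\int_\alpha^\beta\frac{2r-(\alpha+\beta)}{1+q^2(r-\beta)^2}\,dr>0.
\]
This is the key reformulation: the same denominator now appears on both sides, and only the numerator $2(r-m)$, where $m:=(\alpha+\beta)/2$, carries the sign information.

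Now I show positivity. Setting $g(r):=1/(1+q^2(r-\beta)^2)$, note that for $q>0$ the function $g$ is strictly increasing on $[\alpha,\beta]$ (since $(r-\beta)^2$ is strictly decreasing there). Writing
\[
\int_\alpha^\beta (r-m)g(r)\,dr=\int_\alpha^\beta (r-m)\bigl(g(r)-g(m)\bigr)\,dr+g(m)\int_\alpha^\beta (r-m)\,dr,
\]
the last term vanishes because $m$ is the midpoint of $[\alpha,\beta]$. In the remaining integrand, $(r-m)$ and $g(r)-g(m)$ have the same sign (both negative on $[\alpha,m)$, both positive on $(m,\beta]$), hence the integrand is nonnegative and indeed strictly positive on $(\alpha,\beta)\setminus\{m\}$. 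This yields the strict inequality and also identifies the equality cases: equality forces $g$ to be constant on $[\alpha,\beta]$, which for $q>0$ is possible only if $\alpha=\beta$.

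There is no real obstacle here: the only nontrivial step is spotting the reflection substitution that makes the two denominators identical; once this is done, the positivity argument is a one-line Chebyshev-style computation.
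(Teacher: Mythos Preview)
Your proof is correct and takes a genuinely different route from the paper. The paper computes both integrals explicitly via the substitutions $u=q(r-\beta)$ and $u=q(r-\alpha)$, obtaining the closed-form identity
\[
\int_\alpha^\beta\frac{r\,dr}{1+q^2(r-\beta)^2}-\int_\alpha^\beta\frac{r\,dr}{1+q^2(r-\alpha)^2}=\frac{1}{q^2}\,\varphi\bigl(q(\beta-\alpha)\bigr),
\]
where $\varphi(t)=t\arctan t-\log(1+t^2)$, and then checks $\varphi(0)=\varphi'(0)=0$, $\varphi''(t)>0$ for $t>0$. Your reflection $s=\alpha+\beta-r$ avoids antiderivatives entirely and reduces the question to the positivity of $\int_\alpha^\beta (r-m)g(r)\,dr$ for a strictly increasing $g$, which you handle by the covariance/Chebyshev trick of subtracting $g(m)$. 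Your argument is shorter and more conceptual; the paper's argument has the advantage of yielding an explicit quantitative formula for the gap, though that formula is not actually exploited later in the paper.
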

\begin{proof}Let $q>0$. Let  $\varphi(t):=t\arctan t -\ln (1+t^2)$, $t\in[0,+\infty)$. Since $\varphi(0)=0=\varphi'(0)$ and $\varphi''(t)=2t^2(t^2+1)^{-2}> 0$ for every $t\in(0,+\infty)$ then  $\varphi(t)> 0$ for every $t\in(0,+\infty)$.
Since
\begin{equation*}\int\limits_\alpha^\beta\dfrac{r\,dr}{1+q^2(r-\beta)^2}-\int\limits_\alpha^\beta\dfrac{r\,dr}{1+q^2(r-\alpha)^2}= \frac1{q^2}\varphi(q(\beta-\alpha)),\end{equation*}
 the result follows. If $q=0$ the result is obvious.
\end{proof}
By using Proposition \ref{inrad} in place of Proposition \ref{changevar}, we reason as done in Lemma \ref{struccenter}, and we may prove the corresponding characterization of optimal radial profiles in the center. The proof is actually simplified, thanks to the symmetry assumption. 

\begin{lemma}\label{centerad} Let $q\ge 0, \ a>0,\ H\in\mathbb{R}$.
The minimization problem
\[
\min\left\{\mathscr{D}_{(0,a)}(u):\text{$r\mapsto u(r)-\tfrac{q}{2}r^2$ is concave nonincreasing on $[0,a]$, $u(r)\le u(a)=H$ on $[0,a]$}\right\}
\]
admits the  unique solution $u_*(r):=\frac q2 (r^2-a^2)+H$.
\end{lemma}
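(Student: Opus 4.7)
The plan is to follow the strategy of Lemma~\ref{struccenter} closely, substituting Proposition~\ref{inrad} (the radial parallelogram rule) for Proposition~\ref{changevar}. The radial setting is actually simpler: $w:=u-\tfrac{q}{2}r^{2}$ being concave nonincreasing forces $u'\le qr$ a.e.\ on $(0,a)$, which is the counterpart of the condition $\zeta\le 0$ in Step~1 of Lemma~\ref{struccenter}'s proof, so the ``left-half'' analysis of that proof is trivially empty here.

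First I would reduce to piecewise parabolic $u$ (with second derivative $q$ on each piece) via the obvious radial analog of Definition~\ref{approx}: take $w_{h}$ to be the piecewise linear interpolant of $w$ at grid nodes and set $u_{h}=w_{h}+\tfrac{q}{2}r^{2}$. The admissibility $u_{h}\le H$ is preserved because each upward-opening parabolic piece lies below its secant, and the convergence $\mathscr{D}_{(0,a)}(u_{h})\to\mathscr{D}_{(0,a)}(u)$ follows by dominated convergence as in Proposition~\ref{Dconvergence}. For such a piecewise parabolic $u$, I let $\sigma\in[0,a)$ be the first point where $u'_{+}$ vanishes; since $u'_{+}$ can only jump downward at breakpoints, such a $\sigma$ is either $0$ or a continuous zero crossing inside a parabolic piece, and it exists because $u'_{+}(0)\le 0$ while the constraint $u\le H=u(a)$ rules out $u'<0$ throughout.

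On $[0,\sigma]$ one has $u'\le 0$, and $q$-concavity together with $u'_{-}(\sigma)\ge 0$ yields $u'(r)\ge q(r-\sigma)$, hence $(u')^{2}\le q^{2}(\sigma-r)^{2}$; Proposition~\ref{inrad} then gives
\[
\mathscr{D}_{(0,\sigma)}(u)\ge\int_{0}^{\sigma}\frac{r\,dr}{1+q^{2}(\sigma-r)^{2}}\ge\int_{0}^{\sigma}\frac{r\,dr}{1+q^{2}r^{2}}.
\]
On $[\sigma,a]$ I partition at the remaining sign-change points $\sigma=\xi_{0}<\xi_{1}<\cdots<\xi_{k}<\xi_{k+1}=a$ of $u'$. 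On each $[\xi_{i},\xi_{i+1}]$ the derivative has constant sign and vanishes at one endpoint, so $q$-concavity gives $|u'(r)|\le q|r-\xi_{j}|$ for the appropriate $\xi_{j}\in\{\xi_{i},\xi_{i+1}\}$, and a direct pointwise comparison (when $\xi_{j}=\xi_{i}$) or Proposition~\ref{inrad} (when $\xi_{j}=\xi_{i+1}$), together with $0\le\xi_{i}\le r$ (so $(r-\xi_{i})^{2}\le r^{2}$), yields
\[
\mathscr{D}_{(\xi_{i},\xi_{i+1})}(u)\ge\int_{\xi_{i}}^{\xi_{i+1}}\frac{r\,dr}{1+q^{2}(r-\xi_{i})^{2}}\ge\int_{\xi_{i}}^{\xi_{i+1}}\frac{r\,dr}{1+q^{2}r^{2}}.
\]
Summation and combination with the $[0,\sigma]$ bound produces $\mathscr{D}_{(0,a)}(u)\ge\int_{0}^{a}\tfrac{r\,dr}{1+q^{2}r^{2}}=\mathscr{D}_{(0,a)}(u_{*})$.

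For uniqueness, equality forces in turn: the strict form of Proposition~\ref{inrad} gives $\sigma=0$; the strict inequality $(r-\xi_{i})^{2}<r^{2}$ on a set of positive measure whenever $\xi_{i}>0$ rules out any further breakpoints; and the resulting pointwise equality $|u'|=qr$ combined with $u'\le qr$ forces $u'(r)=qr$ a.e., whence $u=u_{*}$ after using $u(a)=H$. I expect the main technical obstacle to be the careful tracking of $u'_{+}$ at breakpoints of the piecewise parabolic approximation of a general $u$ --- specifically ensuring that $\sigma$ is a continuous zero crossing and not a sign jump --- which is handled by the one-sided derivative structure combined with Remark~\ref{nonuniform}, mirroring the corresponding step in the equality part of Lemma~\ref{struccenter}.
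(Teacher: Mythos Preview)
Your argument is correct, but it takes a considerably more laborious route than the paper's. You import the full approximation machinery of Lemma~\ref{struccenter}: reduce to piecewise parabolic profiles, locate the first zero $\sigma$ of $u'_+$, partition the remainder at sign changes, and then (for general $u$) plan to recover uniqueness by the careful node-fixing of Remark~\ref{nonuniform}. The paper bypasses all of this. Since $w=u-\tfrac{q}{2}r^2$ is concave, the left derivative $u'_-$ is upper semicontinuous, so the set $I=\{r\in(0,a):u'_-(r)<0\}$ is open and splits into at most countably many disjoint intervals $(\alpha_j,\beta_j)$; left continuity of $u'_-$ gives $u'_-(\beta_j)=0$, whence $q$-concavity yields $0\ge u'\ge q(r-\beta_j)$ on each such interval, and Proposition~\ref{inrad} together with the pointwise bound $(r-\alpha_j)^2<r^2$ delivers the desired inequality directly for general $u$. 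On the complement one has $0\le u'\le qr$ and the estimate is immediate. Uniqueness is then a short case distinction (some $\alpha_j>0$; $I=(0,\beta)$ with $\beta<a$; or $I=(0,a)$, where the constraint $u\le u(a)$ gives the sharper bound $u'_-\ge\tfrac{q}{2}(r-a)$), each branch producing a strict inequality without any approximation step. Your route works, but the paper's decomposition via the open negativity set of $u'_-$ is both shorter and handles the equality case for free.
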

\begin{proof}
 If $q=0$ the result is trivial. Let $q>0$.
    Since $r\mapsto u(r)-\tfrac q2 r^2$ is concave nonincreasing we get $u'(r)\le qr$ a.e in $(0,a)$. If $u'\ge 0$ a.e. in $(0,a)$, then
 either $u'(r)=qr$ a.e. in $(0,a)$ or by pointwise estimating the integrand we get $\mathscr{D}_{(0,a)}(u)>\mathscr{D}_{(0,a)}(u_*)$.

 Suppose  that that there are negativity points of the left derivative $u'_-$ on $(0,a)$.
 Since $u$ is $q$-concave, $u'_-$ is upper semicontinuous  on $(0,a)$, therefore the set $I:=\{r\in (0,a):u'_{-}(r)<0\}$ is open, thus a (at most) countable union of (nonempty) disjoint open intervals $(\alpha_j,\beta_j)$. Moreover, if $\beta_j<a$  there holds $u'_-(\beta_j)=0$ (left continuity of $u'_-$).  A direct consequence of $q$-concavity and of the constraint $u(r)\le u(a)$ on $[0,a]$ is that $u'_-(r)\ge \tfrac q2(r-a)$  on $(0,a)$, see Proposition \ref{paral}, therefore if instead  $\beta_j=a$  we still have $\lim_{r\to a^-}u'_-(r)=0$. On the other hand, $q$-concavity yields  $0\ge u'_-(r)\ge q(r-\beta_{j})$  on any interval  $(\alpha_{j},\beta_{j})$.
 Since $u'_-<0$ at some point in $(0,a)$, there is at least one of these intervals $(\alpha_j,\beta_j)$. If there exists an index $j$ such that $\alpha_j>0$, Proposition \ref{inrad} entails
\begin{equation*}\begin{aligned}
\int_I\dfrac{rdr}{1+u'(r)^{2}}&=\sum_j\int_{\alpha_j}^{\beta_j}\dfrac{rdr}{1+u'(r)^{2}}\ge \sum_j\int_{\alpha_j}^{\beta_j}\dfrac{rdr}{1+q^2(r-\beta_j)^2}\\&\ge \sum_j\int_{\alpha_j}^{\beta_j}\dfrac{rdr}{1+q^{2}(r-\alpha_j)^2}> \sum_j\int_{I}\dfrac{rdr}{1+q^{2}r^2}=\int_{I}\dfrac{rdr}{1+q^{2}r^{2}}.
\end{aligned}
\end{equation*}
By taking into account that
$$
\displaystyle\int_{[0,a]\setminus I}\dfrac{rdr}{1+u'(r)^{2}}\ge\int_{[0,a]\setminus I}\dfrac{rdr}{1+q^{2}r^{2}},
$$
we get $\mathscr{D}_{a}(u)>\mathscr{D}_{a}(u_*)$.
The remaining case is $I=(0,\beta)$ for some $\beta\in (0,a]$. If $\beta<a$, $q$-concavity and Proposition \ref{inrad} yield
$$
\mathscr{D}_{a}(u)\ge \int_0^\beta\frac{r\,dr}{1+q^2r^2}+\int_\beta^a\frac{r\,dr}{1+q^2(r-\beta)^2}>\int_0^a\frac{r\,dr }{1+q^2r^2}=\mathscr{D}_{a}(u_*).
$$
If $\beta=a$, we use $0\ge u'(r)\ge \tfrac q2(r-a)$ a.e. on $(0,a)$ and we get
$$
\mathscr{D}_{a}(u)\ge \int_0^a\frac{r\,dr}{1+\tfrac{q^2r^2}{4}}>\int_0^a\frac{r\, dr}{1+q^2r^2}=\mathscr{D}_{a}(u_*),
$$
concluding the proof.
\end{proof}

\begin{lemma} \label{stessaquota}Let $q\ge 0$. Let $0\le\alpha\le\gamma\le\beta$ and  $q(\beta-\gamma)\le2$. Let moreover $u\colon [\alpha,\beta]\to\mathbb{R}$ be an absolutely  continuous function such that
\begin{itemize}
\item [(i)] $u(\gamma)=u(\beta)\ge u(r)$ for any $r\in[\gamma,\beta]$ and the restriction of $u$  on $ [\gamma,\beta]$ is $q$-concave;
\item[(ii)]   $u'(r)\le -1$ a.e. on $ (\alpha,\gamma)$.
\end{itemize}
Then
\[\int\limits_\alpha^\beta\dfrac{r\,dr}{1+u'(r)^2}\ge\int\limits_\alpha^\beta\dfrac{r\,dr}{1+{w_u}'(r)^2},\]
where ${w_u}\colon [\alpha,\beta]\to \mathbb{R}$ is  the absolutely continuous function defined by
\begin{equation*}
w_u(r):=\left\{
\begin{array}{lcl}
u(r+\gamma-\alpha)+u(\alpha)-u(\beta)&\text{if}&r\in[\alpha,\alpha+\beta-\gamma]\\
u(r-\beta+\gamma)&\text{if}&r\in[\alpha+\beta-\gamma,\beta].
\end{array}
\right.
\end{equation*}
\end{lemma}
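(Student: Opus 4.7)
The plan is to exhibit $w_u$ as a ``swap'' of the two regions of $u$ and then use the very different slope regimes on those regions to compare the weighted integrals. The key preliminary observation is that the hypotheses force complementary bounds on $(u')^2$: on $(\alpha,\gamma)$, (ii) gives $u'(r)^2\ge 1$ directly; on $(\gamma,\beta)$, applying Proposition \ref{paral} to the restriction of $u$ (which by (i) has a common maximum at $\gamma$ and $\beta$) yields $\tfrac{q}{2}(r-\beta)\le u'(r)\le\tfrac{q}{2}(r-\gamma)$, so the constraint $q(\beta-\gamma)\le 2$ forces $|u'(r)|\le 1$ and hence $u'(r)^2\le 1$ a.e.\ on $(\gamma,\beta)$. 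By construction of $w_u$, the same bounds pass over: $(w_u')^2\le 1$ a.e.\ on $(\alpha,\alpha+\beta-\gamma)$ and $(w_u')^2\ge 1$ a.e.\ on $(\alpha+\beta-\gamma,\beta)$.

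Next I would carry out the two changes of variable that implement the swap. On $\int_\gamma^\beta r(1+u'(r)^2)^{-1}\,dr$, the substitution $s=r-(\gamma-\alpha)$ identifies $u'(s+\gamma-\alpha)=w_u'(s)$ on $[\alpha,\alpha+\beta-\gamma]$; on $\int_\alpha^\gamma r(1+u'(r)^2)^{-1}\,dr$, the substitution $s=r+(\beta-\gamma)$ identifies $u'(s-\beta+\gamma)=w_u'(s)$ on $[\alpha+\beta-\gamma,\beta]$. Splitting off the $r$-weight as $s\pm(\gamma-\alpha)$ or $s\mp(\beta-\gamma)$ and summing, one obtains the identity
\[
\int_\alpha^\beta\frac{r\,dr}{1+u'(r)^2}-\int_\alpha^\beta\frac{s\,ds}{1+w_u'(s)^2}=(\gamma-\alpha)\!\int_\alpha^{\alpha+\beta-\gamma}\!\frac{ds}{1+w_u'(s)^2}-(\beta-\gamma)\!\int_{\alpha+\beta-\gamma}^{\beta}\!\frac{ds}{1+w_u'(s)^2}.
\]

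Finally, I would use the two-sided bound on $(w_u')^2$ derived in the first paragraph to control the right-hand side. On the first interval $(w_u')^2\le 1$, so the integrand is $\ge \tfrac12$; on the second $(w_u')^2\ge 1$, so the integrand is $\le \tfrac12$. Therefore the right-hand side is bounded below by $\tfrac12(\gamma-\alpha)(\beta-\gamma)-\tfrac12(\beta-\gamma)(\gamma-\alpha)=0$, which is exactly the inequality in the statement. The only genuinely delicate step is the bookkeeping in the change of variables and checking that $w_u$ is absolutely continuous at the splitting point $\alpha+\beta-\gamma$ (which follows from the identity $u(\gamma)=u(\beta)$); the rest of the argument is a straightforward rearrangement driven by the threshold value $|u'|=1$.
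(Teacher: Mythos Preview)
Your proof is correct and in fact more elementary than the paper's. Both proofs begin with the same change-of-variables identity, but the paper then invokes Lemma~\ref{struccenter} (the full one-dimensional center lemma) to bound $D_{(\gamma,\beta)}(u)$ from below by $\tfrac{2}{q}\arctan(\tfrac{q}{2}(\beta-\gamma))$, and finishes with the calculus inequality $2\arctan z\ge z$ on $[0,1]$. You instead observe directly from Proposition~\ref{paral} and the assumption $q(\beta-\gamma)\le 2$ that $|u'|\le 1$ a.e.\ on $(\gamma,\beta)$, which combined with $|u'|\ge 1$ a.e.\ on $(\alpha,\gamma)$ lets you estimate both cross terms against the common threshold $\tfrac12$. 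Your route avoids the heavy Lemma~\ref{struccenter} entirely and replaces the $\arctan$ estimate by a trivial pointwise bound; the paper's route, on the other hand, uses less information about $u$ on $(\gamma,\beta)$ (only the lower bound on $D_{(\gamma,\beta)}(u)$, not the pointwise derivative bound), which is not needed here but illustrates how Lemma~\ref{struccenter} feeds into the radial argument.
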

\begin{proof}
Let $q>0$. It is easily seen, by taking (ii) into account, that
\[
\begin{aligned}
\int\limits_\alpha^\beta\dfrac{r\,dr}{1+{w_u}'(r)^2}&=
\int_\gamma^\beta\frac{(r+\alpha-\gamma)\,dr}{1+u'(r)^2}+\int_\alpha^\gamma\frac{(r+\beta-\gamma)\,dr}{1+u'(r)^2}\\&\le
(\alpha-\gamma)\int\limits_{\gamma}^{\beta}\dfrac{dr}{1+ u'(r)^{2}}+\int_\alpha^\beta\frac{r\,dr}{1+u'(r)^2}+
\frac12(\beta-\gamma)(\gamma-\alpha).
\end{aligned}\]
Since (i) holds,  Lemma \ref{struccenter} entails $D_{(\gamma,\beta)}(u)\ge D_{(\gamma,\beta)}(\wp_{\gamma;\beta}^{u(\gamma)})=\tfrac2q\arctan\left(\tfrac q2(\beta-\gamma)\right)$, so that
\[\begin{aligned}\int\limits_\alpha^\beta\dfrac{r\,dr}{1+{w_u}'(r)^2}-\int\limits_\alpha^\beta\dfrac{r\,dr}{1+{u}'(r)^2}&\le
(\alpha-\gamma)\int\limits_{\gamma}^{\beta}\dfrac{dr}{1+ u'(r)^{2}}+\frac12 (\beta-\gamma)(\gamma-\alpha)\\
&\le(\gamma-\alpha)\left[\frac{ \beta-\gamma}2-\frac2q\arctan\left(\tfrac q2(\beta-\gamma)\right)\right]=\frac{\alpha-\gamma}q \psi\left(\tfrac q2(\beta-\gamma)\right)
\end{aligned}\]
where $\psi(z):=2\arctan z -z.$
Since
$\psi(0)=0$, $\psi'(z)=(1-z^2)(1+z^2)^{-1}\ge 0$ for every $z\in [0,1]$ and $\tfrac q2(\beta-\gamma)\in[0,1]$, the result follows. If $q=0$ the term $\tfrac2q\arctan(\tfrac q2(\beta-\gamma))$ becomes $\beta-\gamma$ and the result follows as well.
%
%
\end{proof}

In the one dimensional case, Proposition \ref{spez} is necessary to show that the slope is greater than or equal to $1$ (in modulus) on the profile side.
This property holds true in the radial two-dimensional case as well, even if we look to the class of nondecreasing radial profiles. It is in fact a   consequence of  \cite[Theorem 5.4]{M} (see also \cite{BFK2}). We give a proof with the following lemma.


\begin{lemma}\label{marc} Let $0\le R_{1}< R_{2}$,  $m_{1}> m_{2}$. 
 Let
\begin{equation*}\mathcal{W}:=\left\{u\in W^{1,1}_{\text{loc}}(R_{1},R_{2})\colon u'\le 0\  \text{a.e. in } (R_{1},R_{2}),\; u(R_{1})=m_{1}> m_{2}=u(R_{2})\right\},\end{equation*}
where the boundary values are understood as limits.
Then $ \mathscr{D}_{(R_1,R_2)}$ 
admits a minimizer on $\mathcal{W}$ which is concave in  $(R_1,R_2)$. If $u_*\in\arg\min_{\mathcal{W}}\mathscr{D}_{(R_1,R_2)}$, then $|u_*'(r)|\not\in (0,1)$ for a.e. $r\in (R_1,R_2)$.
\end{lemma}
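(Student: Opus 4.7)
The lemma has two assertions: existence of a concave minimizer in $\mathcal{W}$ and the pointwise slope property $|u_*'|\notin(0,1)$ a.e.\ for any minimizer $u_*$. My plan is to first establish existence by concavifying a minimizing sequence via a slope-rearrangement argument and extracting a concave limit, and then to prove the slope property via a local Proposition~\ref{spez}-type perturbation at Lebesgue points of $u_*'$.

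For existence, let $(u_n)\subset\mathcal{W}$ be a minimizing sequence. Since each $u_n$ is nonincreasing with finite limits $m_1,m_2$ at the endpoints, $u_n$ is absolutely continuous on $[R_1,R_2]$. For each $n$, let $v_n$ denote the concave rearrangement of $u_n$ obtained by the construction of Proposition~\ref{rearr} (piecewise-affine approximation, slopes reordered nonincreasingly, then limit). Then $v_n$ is concave nonincreasing, and retains the boundary values $m_1,m_2$ since the total variation is invariant under slope permutations, so $v_n\in\mathcal{W}$. A pairwise-swap computation shows $\mathscr{D}_{(R_1,R_2)}(v_n)\le\mathscr{D}_{(R_1,R_2)}(u_n)$: exchanging two slope pieces of common length $L$, carrying slopes $s_1>s_2$ at radii $a<b$ respectively, changes the weighted integral $\int r/(1+v'^{2})\,dr$ by $\tfrac{1}{2}\bigl(\tfrac{1}{1+s_1^{2}}-\tfrac{1}{1+s_2^{2}}\bigr)\bigl(2L(a-b)\bigr)$, a product of a positive and a negative factor, hence negative, so placing the steeper slope at the larger radius is always favorable. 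The concave functions $(v_n)$, with values in $[m_2,m_1]$, are locally equi-Lipschitz on $(R_1,R_2)$, so Ascoli–Arzelà (or Helly) yields a subsequence converging locally uniformly and with a.e.-convergent derivatives to a concave $v_*\in\mathcal{W}$ (boundary values preserved as limits). Dominated convergence with integrable dominant $r$ then gives $\mathscr{D}(v_*)=\lim_n\mathscr{D}(v_n)=\inf_{\mathcal{W}}\mathscr{D}$.

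For the slope property I argue by contradiction. Suppose $u_*\in\arg\min_{\mathcal{W}}\mathscr{D}_{(R_1,R_2)}$ and that $F:=\{r\in(R_1,R_2)\colon u_*'(r)\in(-1,0)\}$ has positive measure. By Lebesgue differentiation almost every point of $F$ is a Lebesgue point of $u_*'$, so I fix $r_0\in F$ with $u_*'(r_0)=s_0\in(-1,0)$ and write $u_*(r)=u_*(r_0-\eps)+s_0(r-r_0+\eps)+\rho_\eps(r)$ on $I_\eps:=[r_0-\eps,r_0+\eps]$ with $\sup_{I_\eps}|\rho_\eps|=o(\eps)$. Define $\hat u_\eps\in\mathcal{W}$ equal to $u_*$ outside $I_\eps$, constant at level $u_*(r_0-\eps)$ on $[r_0-\eps,r_0-\eps+\delta_\eps]$ and of slope $-1$ on $[r_0-\eps+\delta_\eps,r_0+\eps]$, where $\delta_\eps:=2\eps-(u_*(r_0-\eps)-u_*(r_0+\eps))=2\eps(1+s_0)+o(\eps)\in(0,2\eps)$ for small $\eps$; monotonicity and admissibility are then automatic. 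The inequality $\tfrac{1}{1+t^{2}}\ge 1+\tfrac{t}{2}$ for every $t\le 0$ (with equality iff $t\in\{0,-1\}$, saturated by $\hat u_\eps'$), together with integration by parts using $u_*(r_0\pm\eps)=\hat u_\eps(r_0\pm\eps)$, yields
\[
\mathscr{D}_{(R_1,R_2)}(u_*)-\mathscr{D}_{(R_1,R_2)}(\hat u_\eps)\ge \tfrac12\int_{I_\eps}(\hat u_\eps-u_*)\,dr.
\]
A direct geometric expansion (two affine pieces of $\hat u_\eps$ against the essentially affine $u_*$, with matching endpoints) gives $\int_{I_\eps}(\hat u_\eps-u_*)\,dr=2\eps^{2}|s_0|(1+s_0)+o(\eps^{2})>0$ for small $\eps$, contradicting the minimality of $u_*$. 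Hence $|F|=0$; since $u_*'\le 0$ a.e.\ on $(R_1,R_2)$ by admissibility, we conclude $|u_*'|\notin(0,1)$ a.e.

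The main obstacle is the quantitative asymptotic in the perturbation argument: the $o(\eps)$ error arising from the Lebesgue-point expansion of $u_*$ must be shown to be genuinely of lower order than $2\eps^{2}|s_0|(1+s_0)$, uniformly as $\eps\to0$, so that the geometric leading term survives the estimate; the matching $\delta_\eps\in(0,2\eps)$ and the admissibility of $\hat u_\eps$ must be checked for all sufficiently small $\eps$. The slope-rearrangement inequality used for existence is technically lighter, relying on an elementary pairwise swap, though its extension from piecewise-affine approximations to general $u_n\in\mathcal{W}$ requires a limit passage in the spirit of Proposition~\ref{Dconvergence}.
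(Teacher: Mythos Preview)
Your proof is essentially correct and takes a genuinely different route from the paper. The paper introduces the convex relaxation $\tilde f(t)=\tfrac{2-t}{2}$ for $t\in[0,1]$, $\tilde f(t)=\tfrac{1}{1+t^2}$ for $t\ge 1$, proves existence for the relaxed functional $\tilde{\mathscr{D}}$, and then shows that $\min_{\mathcal W}\tilde{\mathscr{D}}=\min_{\mathcal W_{**}}\tilde{\mathscr{D}}$ via a concave-\emph{envelope} inequality; the slope property is obtained by a global replacement (as in \cite{BFK2}) of the region $\{u'_+>-1\}$ by a flat part followed by a slope-$(-1)$ segment, and the identification $\mathscr{D}=\tilde{\mathscr{D}}$ on minimizers closes the argument. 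You instead use the concave \emph{rearrangement} directly on $\mathscr{D}$ (your pairwise-swap inequality only needs that $t\mapsto 1/(1+t^2)$ is decreasing on $[0,\infty)$, so no relaxation is required), and for the slope property you perform a purely local perturbation at a Lebesgue point, exploiting the weight $r$ through the integration-by-parts identity $\int_{I_\eps} r(u_*'-\hat u_\eps')\,dr=\int_{I_\eps}(\hat u_\eps-u_*)\,dr$. Your quantitative estimate $\int_{I_\eps}(\hat u_\eps-u_*)\,dr=2\eps^2|s_0|(1+s_0)+o(\eps^2)$ does hold: since $\sup_{I_\eps}|\rho_\eps|\le\int_{I_\eps}|u_*'-s_0|=o(\eps)$ at a Lebesgue point, the error term is indeed $o(\eps^2)$.

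One point deserves more care in your existence step. After passing to the concave limit $v_*$ of the $(v_n)$, the boundary condition $v_*(R_1^+)=m_1$ is automatic (concavity forces $v_*$ to lie above the chord joining $(R_1,m_1)$ and $(R_2,m_2)$), but $v_*(R_2^-)=m_2$ is not: mass can concentrate at the right endpoint, yielding $v_*(R_2^-)=m_2'>m_2$. The fix is short: replace $v_*$ by the vertical stretching $\tilde v(r)=m_1+\tfrac{m_1-m_2}{m_1-m_2'}(v_*(r)-m_1)$, which lies in $\mathcal W$, is still concave, has $|\tilde v'|\ge|v_*'|$ and hence $\mathscr{D}(\tilde v)\le\mathscr{D}(v_*)\le\liminf\mathscr{D}(v_n)$. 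With this patch your argument is complete; the paper's use of the relaxed functional $\tilde{\mathscr{D}}$ sidesteps this particular issue but at the cost of the extra machinery.
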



\begin{proof}
For $u\in\mathcal{W}$ we define
\begin{equation*}
\tilde f(t):=\left\{\begin{array}{ll} \vspace{5pt} \dfrac{2-t}{2} &\ \text{if}\ 0\le t\le 1\\
\dfrac{1}{1+|t|^{2}}& \ \text{if}\ t\ge 1.
\end{array}\right.
\quad\text{and}\qquad
{\tilde {\mathscr{D}}}_{(R_1,R_2)}(u):=
\int_{R_1}^{R_2}r\tilde f(|u'(r)|)dr.
\end{equation*}
It is readily seen that  $\tilde f$ is convex and that
$\lim_{t\to +\infty}\tfrac{\tilde f(t|z|)}{t}=0$ for any $z\in\mathbb{R}$,
hence $\tilde{\mathscr{D}}_{(R_1,R_2)}$ is sequentially l.s.c. with respect to the $w^{*}-BV_{loc}(R_1,R_2)$ convergence. Moreover if $(u_{n})\subset \mathcal W$ is a minimizing sequence for $\tilde{\mathscr{D}}_{(R_1,R_2)}$, then
$$\int_{R_1}^{R_2}|u'_{n}(r)|\,dr=m_{1}-m_{2},$$
which entails existence of minimizers of ${\tilde {\mathscr{D}}}_{(R,1)}$ on $\mathcal{W}$.
Let now $R_1\le \alpha < \gamma\le \beta\le R_2$ and let $w\in \mathcal W$ be a piecewise affine function with slopes $\xi_{1}\le 0$ in $(\alpha,\gamma)$ and $\xi_2\le 0$ in $(\gamma,\beta)$, such that $\xi_1\le\xi_2$. Then, by setting  $\lambda :=(\gamma-\alpha)(\beta-\alpha)^{-1}$, we have
\begin{equation*}
\int_{\alpha}^{\beta}r\tilde f(|w'(r)|)\,dr=\dfrac{1}{2}\left ((\gamma^{2}-\alpha^{2})\tilde f(|\xi_{1}|)+(\beta^{2}-\gamma^{2})\tilde f(|\xi_{2}|)\right )
\end{equation*}
 and convexity of $\tilde f(|\cdot|)$ on $(-\infty, 0]$ entails
\begin{equation*}
\int_{\alpha}^{\beta}r\tilde f(|\lambda \xi_{1}+(1-\lambda) \xi_{2}|)\,dr\le \dfrac{1}{2}(\beta^{2}-\alpha^{2})\left (\lambda\tilde f(|\xi_{1}|)+(1-\lambda)\tilde f(|\xi_{2}|)\right ).
\end{equation*}
By taking into account that $\tilde f$ is decreasing we get
\begin{equation}\label{concenv}
\int_{\alpha}^{\beta}r\tilde f(|\lambda \xi_{1}+(1-\lambda) \xi_{2}|)\,dr-\int_{\alpha}^{\beta}r\tilde f(|w'|)\,dr\le \dfrac{1}{2}(\beta-\gamma)(\gamma-\alpha)(\tilde f(|\xi_{1}|)-\tilde f(|\xi_{2}|))\le 0.
\end{equation}
Hence, if $w_{**}$ denotes the concave envelope of $w$, \eqref{concenv} entails
$
{\tilde {\mathscr{D}}}_{(R_1,R_2)}(w)-{\tilde {\mathscr{D}}}_{(R_1,R_2)}(w_{**})\ge 0
$
for every  piecewise affine  $w\in \mathcal W$ and therefore for  every $w\in \mathcal W$,
and we may conclude that $\tilde{\mathscr{D}}_{(R_1,R_2)}$ admits a minimizer on $\mathcal{W}_{**}$ and that
\begin{equation}\label{**}\min_{\mathcal W}{\tilde {\mathscr{D}}}_{(R_1,R_2)}=\min_{\mathcal W_{**}}{\tilde {\mathscr{D}}}_{(R_1,R_2)},
\end{equation}
where ${\mathcal W_{**}}:=\{u\in \mathcal W: \ u \text{ is concave}\}$.

Next, we let $u\in\mathcal{W}_{**}$
\EEE and we argue as in \cite[Theorem 2.3]{BFK2}. We let $\bar r:=\inf A_u$, where
$A_u:=\left\{r\in (R_1,R_2): u'_{+}(r)\le -1\right\}\cup\{R_2\}$, \EEE
and
\begin{equation*}
v(r)=\left\{\begin{array}{ll} \min\{u(\bar r)+u'_{+}(\bar r)(r-\bar r),\ m_1\}& \ \text{if}\ r\in (R_1,\bar r)\\
u(r)&\  \text{if}\ r\in [\bar r,R_2).
\end{array}\right.
\end{equation*}
We have $v\in\mathcal{W}_{**}$, $v\ge u$ on $(R_1,R_2)$ and $|v'|\notin (0,1)$ a.e. on $(R_1,R_2)$.  Moreover, $|v'|\in \{0,1\}$ and $|u'|\in (0,1)$ a.e. on the set $I:=\{r\in (R_1,R_2):u(r)\neq v(r)\}$, while $u'=v'$ a.e. on the set $E:=\{r\in(R_1,R_2):u(r)=v(r)\}$. These information on $u',v'$, together with the definition of $\tilde {\mathscr{D}}_{(R_1,R_2)}$, directly entail  $\mathscr{D}_{(R_1,R_2)}(u)\ge \tilde{\mathscr{D}}_{(R_1,R_2)}(u)$, $\mathscr{D}_{(R_1,R_2)}(v)= \tilde{\mathscr{D}}_{(R_1,R_2)}(v)$ and \EEE
\[\begin{aligned}
&\mathscr{D}_{(R_1,R_2)}(u)\ge\tilde{\mathscr{D}}_{(R_1,R_2)}(u)=\int_E r\tilde f(|u'(r)|)\,dr+\int_I r\tilde f(|u'(r)|)\,dr\\\qquad&=\tilde{\mathscr{D}}_{(R_1,R_2)}(v)+\int_I \left(\tilde f(|u'(r)|)-\tilde f(|v'(r)|)\right)r\,dr=\tilde{\mathscr{D}}_{(R_1,R_2)}(v)+\int_{R_1}^{R_2}\frac{|v'(r)|-|u'(r)|}2\,r\,dr
\\\qquad&=\tilde{\mathscr{D}}_{(R_1,R_2)}(v)+\int_{m_2}^{m_1}\frac{v^{-1}(t)-u^{-1}(t)}2\,dt=\mathscr{D}_{(R_1,R_2)}(v)+\int_{m_2}^{m_1}\frac{v^{-1}(t)-u^{-1}(t)}2\,dt,
\end{aligned}
\]
where we changed variables in the last but one equality, taking into account that $u,v$ are concave nonincreasing on $(R_1,R_2)$.
Since $v\ge u$, we conclude that $\mathscr{D}_{(R_1,R_2)}(u)\ge \mathscr{D}_{(R_1,R_2)}(v)$ with equality if and only if $u=v$,  and that the same holds for $\tilde{\mathscr{D}}_{(R_1,R_2)}$. In particular, if $u\in\arg\min_{\mathcal{W}_{**}}\tilde{\mathscr{D}}_{(R_1,R_2)}$, then $u=v$ implying  $\mathscr{D}_{(R_1,R_2)}(u)=\tilde{\mathscr{D}}_{(R_1,R_2)}(u)$, and this entails that  $u$ minimizes also $\mathscr{D}_{(R_1,R_2)}$ on $\mathcal{W}_{**}$. \EEE
  Summing up, $\mathscr{D}_{(R_1,R_2)}$ admits a minimizer on $\mathcal{W}_{**}$, and moreover $u$ is a minimizer of $\tilde{\mathscr{D}}_{(R_1,R_2)}$ on $\mathcal{W}_{**}$ if and only if it is a minimizer of $\mathscr{D}_{(R_1,R_2)}$ on $\mathcal{W}_{**}$, with same  minimal values. In such case  $|u'|\notin (0,1)$ a.e. in $(R_1,R_2)$.

  Let us assume from now on that $u$ is in fact a minimizer of $\mathscr{D}_{(R_1,R_2)}$ on $\mathcal{W}_{**}$.
  If we also take \eqref{**} into account, for every $w\in {\mathcal W}$ we have
\begin{equation}\label{si}\mathscr{D}_{(R_1,R_2)}(u)= \tilde{\mathscr{D}}_{(R_1,R_2)}(u)=\min_{\mathcal{W}_{**}}\tilde{\mathscr{D}}_{(R_1,R_2)}=\min _{\mathcal W}\tilde{\mathscr{D}}_{(R_1,R_2)}\le \tilde{\mathscr{D}}_{(R_1,R_2)}(w)\le \mathscr{D}_{(R_1,R_2)}(w),
\end{equation}
so that $u$ is also a minimizer of $\tilde{\mathscr{D}}_{(R_1,R_2)}$ and of $\mathscr{D}_{(R_1,R_2)}$ on $\mathcal{W}$. It is the desired concave minimizer.
 Eventually,
let $u_*\in\arg\min_{\mathcal{W}}\mathscr{D}_{(R_1,R_2)}$. By definition of $\tilde{\mathscr{D}}_{(R_1,R_2)}$ we have ${\mathscr{D}}_{(R_1,R_2)}(u_*)\ge\tilde{\mathscr{D}}_{(R_1,R_2)}(u_*)$.
We prove that the latter is in fact an equality. Indeed,
   if this was not the case we would be lead, since we have just proved that  ${\mathscr{D}}_{(R_1,R_2)}(u_*)={\mathscr{D}}_{(R_1,R_2)}(u)$, and also using the first equality in \eqref{si}, to $\tilde {\mathscr{D}}_{(R_1,R_2)}(u)={\mathscr{D}}_{(R_1,R_2)}(u)= {\mathscr{D}}_{(R_1,R_2)}(u_*)>\tilde {\mathscr{D}}_{(R_1,R_2)}(u_*)$, against the minimimality of $u$ for $\tilde {\mathscr{D}}_{(R_1,R_2)}$ on $\mathcal{W}$. We conclude that $ {\mathscr{D}}_{(R_1,R_2)}(u_*)=\tilde {\mathscr{D}}_{(R_1,R_2)}(u_*)$, which directly entails $|u_*'|\notin (0,1)$ a.e. in $(R_1,R_2)$.
\end{proof}

The next lemma shows some important properties of solutions of \eqref{radialproblem}.

\begin{lemma}\label{lastbutone} Let $R>0$, $M>0$, $0\le qR\le1$ and $2M\ge qR^2$.
Let $u\in C^0([0,R])$ be  a solution to problem \eqref{radialproblem}. Let $m=\max\{u(x): \ x\in [0,R]\}$  and $ a:=\max\{ x\in  [0,R]: \ u(x)=m\}$.
Then $a< R$, $u$ is strictly decreasing in $[a,R]$ where $u'\le -1$ a.e., $u(R)=0$ and   $m=M$.
\end{lemma}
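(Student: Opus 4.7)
My plan is to establish the four assertions in order, constructing for each an explicit admissible competitor that contradicts minimality of $u$ and relying on the lemmas from the preliminary section.

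\emph{Step 1 ($a<R$).} If $a=R$, then $u\le m=u(R)$ on $[0,R]$, so by the uniqueness in Lemma \ref{centerad} with $H=m$ on the whole interval one has $u(r)=\tfrac q2(r^{2}-R^{2})+m$. Steepening the tail by replacing $u$ on $[R-\eps,R]$ with the line from $(R-\eps,u(R-\eps))$ to $(R,0)$ gives an admissible competitor (the new slope $\sim -m/\eps$ is much more negative than $u'(R-\eps)=q(R-\eps)$, so $q$-concavity at the junction is trivial); its contribution on $[R-\eps,R]$ is $O(\eps^{3}/m^{2})$ against an original $\Theta(\eps)$, contradicting minimality.

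\emph{Step 2 (strict decrease with $u'\le -1$ a.e.).} I would first apply Lemma \ref{centerad} to variations supported in $[0,a]$ to obtain $u(r)=\tfrac q2(r^{2}-a^{2})+m$ on $[0,a]$ (the high-profile condition $2M\ge qR^{2}$ keeps admissible test functions in $[0,M]$), which pins $(u-\tfrac q2 r^{2})'(a^{-})=0$. Then substituting $u|_{[a,R]}$ by the concave $\mathcal W$-minimizer $u_{*}$ from Lemma \ref{marc} (with endpoint values $m$ and $u(R)$) produces an admissible competitor, since concavity implies $q$-concavity and the junction is compatible: $(u_{*}-\tfrac q2 r^{2})'(a^{+})\le 0=(u-\tfrac q2 r^{2})'(a^{-})$. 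By minimality of $u$ the replacement does not reduce the resistance, so $u|_{[a,R]}$ is itself a $\mathcal W$-minimizer, and Lemma \ref{marc} gives $|u'|\notin(0,1)$ a.e.\ on $(a,R)$. To rule out flat intervals, suppose $[c_{1},c_{2}]$ is a maximal one with $u\equiv c<m$: then $u_{+}'(c_{1})=0$ combined with the $q$-concavity of $u$ forces $u_{-}'(c_{1})=0$, and the left continuity of $(u-\tfrac q2 r^{2})_{-}'$ gives $u_{-}'(s)\to 0$ as $s\to c_{1}^{-}$; the dichotomy $|u'|\in\{0\}\cup[1,\infty)$ then forces $u\equiv c$ in a left neighborhood of $c_{1}$, contradicting maximality. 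Iterating leftward the flat would reach $a$, contradicting $u(a)=m>c$.

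\emph{Steps 3 and 4.} If $u(R)>0$, define $\tilde u(r):=u(r)-\tfrac{u(R)}{R-a}(r-a)$ on $[a,R]$ and $\tilde u=u$ on $[0,a]$. This subtracts a linear function so $q$-concavity is preserved; $\tilde u\ge 0$ on $[a,R]$ follows from $u\ge u(R)$ there (strict decrease from Step 2); the junction at $a$ is fine since $u'(a^{+})\le-1<0$. Pointwise $\tilde u'<u'\le-1$ on $(a,R)$ gives $\tilde u'^{2}>u'^{2}$ a.e., hence $\mathscr D_{R}(\tilde u)<\mathscr D_{R}(u)$, contradicting minimality. For $m=M$: if $m<M$, then $\bar u:=u+(M-m)\in\mathcal R_{R;M;q}$ has the same resistance as $u$, so $\bar u$ is another minimizer satisfying Steps 1--3 with the same $a$ and top value $M$; applying the $u(R)=0$ assertion to $\bar u$ yields $u(R)+(M-m)=0$, which combined with $u(R)=0$ forces $m=M$.

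\emph{Main obstacle.} The heart of the proof is Step 2. One needs the prior identification $u|_{[0,a]}=\tfrac q2(r^{2}-a^{2})+m$ (via Lemma \ref{centerad}) precisely so that the junction derivative at $a$ is under control, enabling the $\mathcal W$-replacement argument of Lemma \ref{marc}. The propagation of flat intervals leftward relies on the sharp interplay between the dichotomy $|u'|\in\{0\}\cup[1,\infty)$ coming from Lemma \ref{marc} and the left continuity imposed by $q$-concavity; Steps 1, 3, and 4 are by comparison routine perturbative comparisons.
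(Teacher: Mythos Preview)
The central gap is in Step~2(b). Your argument runs: replace $u|_{[a,R]}$ by the concave minimizer $u_*$ of $\mathscr D_{(a,R)}$ over $\mathcal W$ (nonincreasing functions with the given endpoint values), observe that the glued competitor is admissible, and conclude from minimality of $u$ that $\mathscr D_{(a,R)}(u)\le\mathscr D_{(a,R)}(u_*)=\min_{\mathcal W}\mathscr D_{(a,R)}$, ``so $u|_{[a,R]}$ is itself a $\mathcal W$-minimizer''. But that last implication needs $u|_{[a,R]}\in\mathcal W$, i.e.\ $u$ already nonincreasing on $[a,R]$ --- which is exactly what you are trying to prove. Membership in $\mathcal R_{R;M;q}$ only forces $r\mapsto u(r)-\tfrac q2 r^{2}$ to be nonincreasing; for $q>0$ the function $u$ itself can increase on subintervals of $(a,R)$ while remaining below $m$. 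Hence Lemma~\ref{marc} does not apply to $u|_{[a,R]}$, the dichotomy $|u'|\in\{0\}\cup[1,\infty)$ is unavailable, and your flat-interval propagation in part~(c) collapses too (it also uses $u'_{-}\le 0$ on $(a,R]$, which is part of the unproven monotonicity).

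The paper's proof of strict monotonicity on $[a,R]$ is considerably more involved and is where the hypothesis $qR\le 1$ is actually used. Assuming $u$ is not strictly decreasing, one fixes a local maximum $a_1\in(a,R]$, approximates by piecewise parabolic functions $u_h$, and decomposes $[a,R]$ (following the machinery of Lemma~\ref{strucside}) into finitely many pieces on which $u_h$ is either strictly decreasing or has equal endpoint values. Lemma~\ref{marc} is applied on each decreasing piece, and then --- this is the missing idea --- Lemma~\ref{stessaquota} is applied repeatedly to slide all the equal-endpoint pieces leftward adjacent to $a$, producing in the limit a competitor $\bar u$ with $\bar u(a)=\bar u(a+\tilde\delta)=m$ for some $\tilde\delta>0$; Lemma~\ref{centerad} on $[0,a+\tilde\delta]$ then gives a strict improvement. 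Only after monotonicity is established does the paper invoke Lemma~\ref{marc} on $u|_{[a,R]}$ to get $u'\le -1$ a.e. Your Steps~1, 3 and~4 are along the right lines; a secondary issue is that in Step~2(a) the parabola on $[0,a]$ may dip below $0$ unless one already knows $m\ge \tfrac q2 a^{2}$, which is why the paper establishes $m=M$ and $u(R)=0$ at the end by a single explicit competitor rather than relying on the parabola identification beforehand.
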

\begin{proof} 
If $a=R$, by Lemma \ref{centerad} we get that the resistance of $r\mapsto\tfrac{q}{2}(r^{2}-R^{2})+m$ is less than or equal to
$\mathscr{D}_{R}(u)$,
but then it is readily seen that by taking
\begin{equation*}w(r)=\left\{\begin{array}{lcl}\vspace{0.1cm}
\tfrac{q}{2}(r^{2}-(R-\delta)^{2})+m&\text{if}&r\in[0, R-\delta]\\
\frac m\delta(R-r)&\text{if}&r\in(R-\delta ,R]
\end{array}
\right.\end{equation*}
we get $\mathscr{D}_{R}(w+M-m)=\mathscr{D}_{R}(w)< \mathscr{D}_{R}(u)$ for $\delta$ small enough. This contradicts the minimality of $u$, since $r\mapsto w(r)+M-m$ belongs to $\mathcal{R}_{R;M;q}$, as a direct consequence of the high profile assumption $2M\ge qR^2$. We have obtained $a<R$ and $u(R)<m$.

Next we prove that $u$ is strictly decreasing on $[a,R]$.
Notice  that the restriction of $u$ to $[a,R]$ satisfies the assumptions of Lemma \ref{strucside} (here we have $R$ in place of $b$).
  If $u$ is not strictly decreasing in  $[a, R]$, as done in the proof of Lemma \ref{strucside} we fix a local maximum point $a_1\in(a,R]$ of $u$ and we fix $\delta_0>0$  small enough such that $u(r)\le u(a_1)$ for any $r\in (a_1-\delta_0,a_1)$. \EEE
\EEE By means of Remark \ref{nonuniform}, we let $(u_h)_{h\in\mathbb{N}}$ be an approximating sequence of uniformly converging, piecewise parabolic functions on $[0,R]$, such that   $u_h(0)=u(0)$, $u_h(R)=u(R)$, $u_h(a)=u(a)$ and $u_h(a_1)=u(a_1)$ \EEE for every $h\in\mathbb{N}$.
  Of course Proposition \ref{Dconvergence} applies to functional $\mathscr{D}_{R}$ as well, so that  \EEE
\begin{equation}\label{acca1}
\mathscr{D}_{R}(u)\ge \mathscr{D}_{R}(u_h)
-\frac1h, \quad \forall h\in\mathbb{N}.\end{equation}
The argument is similar to the one of Lemma \ref{strucside}, so we shall skip some details. Following the  the proof of Lemma \ref{strucside} , we define the quantities $\tilde {\mathcal{M}}$, $\mathcal{M}$, $x_0$, $x^*$, $\xi_x$, $z_x$, $\delta_*$ for $u_h$, so that they all depend on $h$, even if for simplicity we omit this dependence in the notation. Here we also define $z_{x^*}:=R$ (and it is possible that $x^*=z_{x^*}=R$).   But since $u_h(r)\le u(r)\le u(a_1)$ for any $r\in (a_1-\delta_0,a_1)$, the argument at the end of the proof of Lemma \ref{strucside} shows that $\delta_*\ge \delta_0$ for any $h\in\mathbb{N}$. 
 \EEE On each interval $[x,z_x]$, $x\in \mathcal{M}$, we have that $u_h$ is a strictly decreasing function, as seen in the proof of Lemma \ref{strucside}. We define $\tilde u_h:[0,R]\to\mathbb{R}$ by modifying $u_h$ on each of these intervals. Indeed, by Lemma \ref{marc} we change $u_h$ on $(x,z_x)$, for any $x\in\mathcal{M}$, with a  resistance minimizer (among nonincreasing functions with fixed boundary values) having a flat part on a subinterval $(x,\tilde x)$ and a concave part with slope not greater than $-1$ a.e. on  $(\tilde x,z_x)$, for a suitable $\tilde x\in [x,z_x)$. In this way, we find  $\tilde u_h(x)=u_h(x)$, $\tilde u_h(z_x)=u_h(z_x)$ and $\mathscr{D}_{R}(u_h)\ge \mathscr{D}_{R}(\tilde u_h)$.   Notice that by its definition, the restriction of $\tilde u_h$ on $[a,R]$ is  absolutely continuous. Notice moreover that $[a,R]$ is now partitioned in a finite number of intervals: we have the intervals of the form $[\tilde x,z_x]$, $x\in \mathcal{M}$, where $\tilde u_h$ is concave nonincreasing with slope a.e. not in $(-1,0)$, while in each of the remaining intervals $\tilde u_h$   is $q$-concave with same value at the two endpoints (and by definition of $\delta_*$, if the sum of the lengths of these remaining intervals is $\delta_{**}$, then $\delta_{**}\ge \delta_*$). \EEE  Starting from $\tilde u_h$, by repeatedly applying  Lemma \ref{stessaquota} (notice that this is possible because of the assumption $qR\le1$) we  construct $u^*_h:[0,R]\to\mathbb{R}$ with the following properties: $u_h^*\le m$, $u_h^*\equiv u_h$ on $[0,a]$, $u_h^*$ is $q$-concave on  $[0,a+\delta_{**}]$,  $u_h^*(a)=u_h^*(a+\delta_{**})=m$,  $u_h^*$ is strictly decreasing on $[a+\delta_{**},R]$, \EEE $u_h^*(R)=u_h(R)=u(R)$,  the range of $u_h^*$ is contained in that of $u_h$ 	\EEE and
 \begin{equation}\label{uaccastar}
 \mathscr{D}_{R}(u_h)\ge \mathscr{D}_{R}(u_h^*).
 \end{equation}
  A last application of Lemma \ref{marc} on $[a+\delta_{**},R]$ entails  $\bar u_h$, given by $u_h^*$ on $[0,a+\delta_{**}]$ and by a  concave \EEE resistance minimizer among nonincreasing functions on the interval $[a+\delta_{**},R]$ with fixed values $m$ and $u(R)$ at the two endpoints.  $\bar u_h$ is $q$-concave on the whole $[0,R]$  with $\bar u_h(a)=\bar u_h(a+\delta_{**})=m$, $\bar u_h(R)=u(R)$ and,
   from \eqref{acca1}, \eqref{uaccastar} and Lemma \ref{marc}, it satisfies
 \begin{equation}\label{acca2}
 \mathscr{D}_{R}(u)\ge
  \mathscr{D}_{R}(\bar u_h)-\frac1h, \quad \forall h\in\mathbb{N}.\end{equation}
 As already observed, $\delta_*$ and $\delta_{**}$ might depend on $h$, but $\delta_{**}\ge \delta_*\ge \delta_0$ and the quantity $\delta_0>0$ is fixed and does not depend on $h$.
 $(\bar u_h)_{h\in\mathbb{N}}$ is a sequence of uniformly bounded $q$-concave functions on $[0,R]$ (in particular, the range of $\bar u_h$ is contained in that of $u_h$, which goes to that of $u$ as $h\to \infty$ by uniform convergence). Therefore, we may invoke Lemma \ref{compactness} in the Appendix: up to extraction of a subsequence, $\bar u_h$  converge uniformly on compact subsets of $(0,R)$ (even of $[0,R)$ in this case since $(\bar u_h)'_+(0)\le 0$)  to some $q$-concave function $\bar u:[0,R]\to [0,m]$  (continuous up to redefinition at  $R$), which is moreover satisfying  $\bar u(a)=\bar u(a+\tilde\delta)=m$, for a suitable $\tilde\delta\in[\delta_0, R-a]$. Indeed, we may pass to the limit in the relations $\bar u_h(a)=\bar u_h(a+\delta_{**})=m$, where $\delta_{**}$ depends in general on $h$ and here $\tilde\delta$ is a corresponding limit point.    \EEE From Lemma 	\ref{compactness}  we also have a.e. convergence of derivatives, implying $\mathscr{D}_{{R}}(\bar u_h)\to \mathscr{D}_{{R}}(\bar u)$ as $h\to\infty$.  Together with \eqref{acca2}, this implies $\mathscr{D}_{R}(u)\ge
  \mathscr{D}_{R}(\bar u)$. But now we define $\bar w:[0,R]\to\mathbb{R}$ as
  \begin{equation*}\bar w(r)=\left\{\begin{array}{lcl}\vspace{0.1cm}
m+\tfrac q2 (r^2-(a+\tilde \delta)^2)&\text{if}&r\in[0, a+\tilde\delta]\\
\bar u(r) 	&\text{if}&r\in(a+\tilde\delta ,R],
\end{array}
\right.\end{equation*}
  and since $\tilde\delta>0$ by Lemma \ref{centerad} we find that $\mathscr{D}_{R}(\bar w+M-m)=
  \mathscr{D}_{R}(\bar w)<\mathscr{D}_{R}(\bar u)$, and $r\mapsto \bar w(r)+M-m$ belongs to $\mathcal{R}_{R;M;q}$, since $2M\ge qR^2$, thus contradicting minimality of $u$.

Now we show   that   $u'\le -1$ a.e. in $(a, R)$.
Being the restriction of $u$ to $[a,R]$  nonincreasing, it  necessarily minimizes  the resistance functional  among all nonincreasing $v$ in $[a, R]$ such that $v(a)=m$ and $v(R)=u(R)$,  otherwise the concave minimizer provided by  Lemma \ref{marc} would give a contradiction. As $u<u(a)$ on $(a,R]$, still by Lemma \ref{marc} we get  that   $u'\le -1$ a.e. in $(a, R)$.

\EEE
 If $m<M$ or $u(R)>0$, we let
\begin{equation*}w_*(r)=\left\{\begin{array}{lcl}\vspace{0.1cm}
\tfrac{q}{2}(r^{2}-a^{2})+M-m&\text{if}&r\in[0, a]\\
\tfrac M{m-u(R)} (u(r)-u(R))&\text{if}&r\in(a ,R].
\end{array}
\right.\end{equation*}
Since $u(a)=m$ and $u'\le -1$ on $(a, R)$, it is clear that $w_*\in\mathcal{R}_{R;M;q}$ and that $$\int_a^R\frac {r\,dr}{1+w_*'(r)^2}<\int_a^R \frac{r\,dr}{1+u'(r)^2},$$
and then Lemma \ref{centerad} implies $\mathscr{D}_{R}(w_*)<\mathscr{D}_{R}(u)$, again contradicting minimality of $u$.
\end{proof}

All the necessary elements for the proof of Theorem \ref{radth} are now settled. Before proceeding with the proof, we give a couple of useful result for the analytic characterization of the side of the optimal profile.

\begin{proposition}\label{technical}
Let $M>0, \ R>0,$ and $h:(-\infty-1]\to\mathbb{R}$ be defined by $h(t)=-t(1+t^{2})^{-2}$. Then
\[a_{M}:=\min\left\{ a\in (0,R): \;-\int _{a}^{R}h^{-1}\left(\frac{a}{4r}\right)\,dr\le M\right\}\]
is well defined and it uniquely realizes equality in the above inequality among values in $(0,R)$. Besides, there exists a unique strictly decreasing $C^1$ function $\eta:[a_M,R)\to\mathbb{R}$  such that $0< \eta(a)\le \tfrac{a}{4}$ and
\begin{equation}\label{etaa} -\int _{a}^{R}h^{-1}\left(\tfrac{\eta(a)}{r}\right)\,dr= M
\end{equation}
for every $a\in [a_{M},R)$. Moreover, there holds
\begin{equation}\label{etaprime}
\displaystyle\eta'(a)\int_{a}^{R}\dfrac{dr}{rh'\left(h^{-1}\left(\eta(a)/r\right)\right)}=h^{-1}\left(\tfrac{\eta(a)}a\right).
\end{equation}
\end{proposition}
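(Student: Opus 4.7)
The plan is to proceed in two main steps: first establish existence and uniqueness of $a_M$, then construct $\eta$ via the implicit function theorem.

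First I would analyze $h$. A direct computation gives $h'(t)=(1+t^2)^{-3}(3t^2-1)$, which is strictly positive on $(-\infty,-1]$. Combined with $h(-1)=1/4$ and $h(t)\to 0^+$ as $t\to-\infty$, this shows that $h\colon(-\infty,-1]\to(0,1/4]$ is a $C^\infty$ diffeomorphism, so $h^{-1}\colon(0,1/4]\to(-\infty,-1]$ is smooth, strictly increasing, with $h^{-1}(1/4)=-1$ and $h^{-1}(s)\sim -s^{-1/3}$ as $s\to 0^+$ (from $h(t)\sim -t^{-3}$). Then $F(a):=-\int_a^R h^{-1}(a/(4r))\,dr$ is well defined on $(0,R]$, since $a/(4r)\in(0,1/4]$ for $r\in[a,R]$. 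Differentiation under the integral sign (dominated on compact subintervals of $(0,R]$) gives
\[
F'(a)=h^{-1}(1/4)-\int_a^R\dfrac{dr}{4r\,h'(h^{-1}(a/(4r)))}=-1-\int_a^R\dfrac{dr}{4r\,h'(h^{-1}(a/(4r)))}<-1,
\]
so $F$ is strictly decreasing. Since $F(R)=0$ and $F(a)\to+\infty$ as $a\to 0^+$ (by the $-1/3$-power asymptotic above), continuity and strict monotonicity imply $\{a\in(0,R]:F(a)\le M\}=[a_M,R]$ for a unique $a_M\in(0,R)$ with $F(a_M)=M$.

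Next, I set $G(a,\eta):=-\int_a^R h^{-1}(\eta/r)\,dr$ on the region $\{(a,\eta):a\in(0,R),\;0<\eta\le a/4\}$, where $\eta/r\le \eta/a\le 1/4$ throughout. Standard computations give
\[
\partial_a G(a,\eta)=h^{-1}(\eta/a)\le -1,\qquad \partial_\eta G(a,\eta)=-\int_a^R\dfrac{dr}{r\,h'(h^{-1}(\eta/r))}<0.
\]
For fixed $a\in[a_M,R)$ we have $G(a,a/4)=F(a)\le M$ with equality iff $a=a_M$, while $G(a,\eta)\to+\infty$ as $\eta\to 0^+$ by the same asymptotic of $h^{-1}$. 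Strict monotonicity in $\eta$ and the intermediate value theorem thus produce a unique $\eta(a)\in(0,a/4]$ solving $G(a,\eta(a))=M$, namely \eqref{etaa}; moreover $\eta(a_M)=a_M/4$ and $\eta(a)<a/4$ for $a>a_M$. The implicit function theorem (applicable since $G\in C^1$ and $\partial_\eta G\ne 0$, working on a smooth extension past $\eta=a/4$ using that $h$ is a diffeomorphism on a neighborhood of $-1$) yields that $\eta$ is $C^1$ on $[a_M,R)$.

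Finally, implicit differentiation of $G(a,\eta(a))\equiv M$ produces $\partial_a G+\eta'(a)\partial_\eta G=0$, which after substitution of the formulas for the partials is precisely \eqref{etaprime}. On the left-hand side of \eqref{etaprime}, $\eta'(a)$ multiplies a strictly positive integral, while the right-hand side equals $h^{-1}(\eta(a)/a)\le -1<0$; hence $\eta'(a)<0$ and $\eta$ is strictly decreasing, completing the proof. I do not foresee a serious obstacle; the most delicate point is the blow-up $F(a)\to+\infty$ as $a\to 0^+$, handled through the explicit $-1/3$-power asymptotic of $h^{-1}$, and the routine smoothness and non-degeneracy checks for the implicit function theorem are immediate from the explicit formulas for $\partial_a G$ and $\partial_\eta G$.
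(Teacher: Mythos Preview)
Your proof is correct and follows essentially the same route as the paper: analyze $h$ and $h^{-1}$, use monotonicity plus the intermediate value theorem to get $a_M$ and then $\eta(a)$, and finish with the implicit function theorem to obtain $C^1$ regularity and formula \eqref{etaprime}. Your argument is in fact slightly more detailed than the paper's in two places: you justify $F(a)\to+\infty$ via the explicit asymptotic $h^{-1}(s)\sim -s^{-1/3}$, and you deduce $\eta'<0$ directly from \eqref{etaprime} rather than from a separate monotonicity observation.
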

\begin{proof} Notice that the inverse function $h^{-1}$ is defined on $(0,\tfrac14]$, it is smooth, increasing and there hold $\lim_{r\to 0}h^{-1}(r)=-\infty$ and $h^{-1}(\tfrac14)=-1$.
Let \begin{equation}\label{varphidia}\varphi(a):=-\int _{a}^{R}h^{-1}\left(\frac{a}{4r}\right)\,dr,\quad a\in (0,R).\end{equation}
It is readily seen, from the definition of $h$, that $\lim_{a\to R}\varphi(a)=0,\ \ \lim_{a\to 0}\varphi(a)=+\infty$ and $\varphi'<0$ on  $(0,R)$. Then there exists a unique $a_{M}$ such that $\varphi(a_{M})=M$ and $
[a_{M},R)=\{a\in (0,R): \varphi(a)\le M\}$. For every $a\in [a_{M},R)$
let $\psi_a: (0, \tfrac{a}{4}]\to [0,+\infty)$ be defined by
\[\psi_{a}(\eta):=-\int _{a}^{R}h^{-1}\left(\frac{\eta}{r}\right)\,dr.\]
Similarly as above we may check that for any $a\in[a_M,R)$ there is  \[\psi_{a}'(\eta)=\int _{R}^{a}\dfrac{dr}{rh'(h^{-1}({\eta}/{r} ))}< 0\]
on $(0,\tfrac a4)$, and moreover $\lim_{\eta\to 0}\psi_{a}(\eta)=+\infty,\ \  \lim_{\eta\to a/4}=\varphi(a)\le M$. Hence for every $a\in [a_{M}, R)$ there exists a unique
$\eta\in (0,a/4]$ such that $\psi_a(\eta)=M$ is satisfied, and we denote it by $\eta(a)$.  Notice that $\psi_a(\eta)$ strictly decreases with $a$ for each $\eta\in(0,\tfrac a4]$ so that the function $[a_M,R)\ni a\mapsto \eta(a)$ is strictly decreasing,  and it satisfies \eqref{etaa}. Moreover, we have $\eta(a_M)=\tfrac a4$, $\lim_{a\to R}\eta(a)=0$. $\eta(a)$ is  $C^1$ and satisfies \eqref{etaprime} by the implicit function theorem.
\end{proof}
 \begin{proposition}\label{crucialpt} Let $q\ge 0,\ R>0$, $M>0$  and let $\gamma_q:(0,R)\to\mathbb{R}$ be defined by
  \begin{equation*}\gamma_{q}(a):=\sqrt{\dfrac{1}{2}\big( 3a^{2}q^{2}+1+\sqrt{9a^{4}q^{4}+10a^{2}q^{2}+1}\big)}.\end{equation*}
  Let $h$, $a_M$ be defined as in {\rm Proposition \ref{technical}}.
 Let the function $\zeta_q:(0,R)\to\mathbb{R}$ be  defined by
 \begin{equation*}\zeta_{q}(a):=-\int_{a}^{R}h^{-1}\left(\frac{a h(-\gamma_{q}(a))} {r}\right)\,dr.
\end{equation*}
Then  there exists  a unique $a_{*}\in [a_{M},R)$ such that $\zeta_q(a_*)=M$.
\end{proposition}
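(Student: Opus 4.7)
The plan is to establish existence of $a_*$ via the intermediate value theorem after comparing $\zeta_q$ with the function $\varphi$ from Proposition \ref{technical}, and to establish uniqueness by showing that $\zeta_q$ is strictly decreasing on $[a_M, R)$.

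\textbf{Setup and existence.} First I would verify that $\gamma_q(a) \ge 1$ on $(0, R)$, with equality iff $q = 0$: this is a direct algebraic check of the defining formula (squaring yields an inequality equivalent to $16 a^2 q^2 \ge 0$). Since $h'(t) = (3t^2-1)(1+t^2)^{-3} > 0$ on $(-\infty,-1]$, $h$ is strictly increasing there, so $h(-\gamma_q(a)) \le h(-1) = 1/4$; hence for $r \in [a, R]$ the argument $a h(-\gamma_q(a))/r$ of $h^{-1}$ lies in $(0, 1/4]$ and $\zeta_q$ is well-defined and continuous (indeed $C^1$) on $(0, R)$. Monotonicity of $h^{-1}$ gives pointwise $-h^{-1}(a h(-\gamma_q(a))/r) \ge -h^{-1}(a/(4r))$, so integration yields $\zeta_q \ge \varphi$ on $(0, R)$, with strict inequality when $q > 0$. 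In particular $\zeta_q(a_M) \ge \varphi(a_M) = M$, while $\zeta_q(a) \to 0$ as $a \to R^-$ by vanishing of the integration range. The intermediate value theorem produces some $a_* \in [a_M, R)$ with $\zeta_q(a_*) = M$; in the edge case $q = 0$ one has $\gamma_q \equiv 1$, so $\zeta_q \equiv \varphi$ and $a_* = a_M$ directly.

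\textbf{Uniqueness via strict monotonicity.} Setting $\mu(a) := a\, h(-\gamma_q(a))$, differentiation under the integral sign yields
\[
\zeta_q'(a) = -\gamma_q(a) - \mu'(a)\, A(a), \qquad A(a) := \int_a^R \frac{dr}{r\, h'\bigl(h^{-1}(\mu(a)/r)\bigr)} > 0,
\]
where positivity of $A(a)$ follows from $h' > 0$ on $(-\infty,-1]$. Using the defining relation $(1+\gamma^2)^2 = (1+3\gamma^2)(1+q^2a^2)$ for $\gamma = \gamma_q(a)$, direct computation of $\gamma_q'(a)$ and substitution leads to the clean identity
\[
\mu'(a) = -\frac{2\gamma_q(a)\bigl(\gamma_q(a)^4 - 2\gamma_q(a)^2 - 1\bigr)}{\bigl(1+\gamma_q(a)^2\bigr)^4}.
\]
When $\gamma_q(a)^2 \le 1+\sqrt{2}$, one has $\mu'(a) \ge 0$ and hence $\zeta_q'(a) \le -\gamma_q(a) < 0$ at once.

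\textbf{Main obstacle.} The delicate regime is $\gamma_q(a)^2 > 1+\sqrt{2}$, which can occur for $a$ near $R$ when $qR$ is close to $1$; here $\mu'(a) < 0$, so the two contributions to $\zeta_q'(a)$ have opposite signs and a sharper estimate is required. I would address this by the change of variable $t = -h^{-1}(\mu(a)/r)$ in $A(a)$, which transforms it into the explicit expression
\[
A(a) = \int_{\gamma_q(a)}^{-p_R} \frac{(1+t^2)^2}{t}\, dt = \log\frac{-p_R}{\gamma_q(a)} + \bigl(p_R^2 - \gamma_q(a)^2\bigr) + \frac{p_R^4 - \gamma_q(a)^4}{4},
\]
where $p_R := h^{-1}(\mu(a)/R) \le -\gamma_q(a)$, and then combine this with the formula for $\mu'(a)$, with the defining relation $h(p_R) = a h(-\gamma_q(a))/R$ between $p_R$ and $\gamma_q(a)$, and with the single-shock assumption $qR \le 1$, to verify the inequality $|\mu'(a)|\, A(a) < \gamma_q(a)$. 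This would show $\zeta_q' < 0$ throughout $[a_M, R)$ and thus yield uniqueness of $a_*$.
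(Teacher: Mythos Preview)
Your existence argument matches the paper's: compare $\zeta_q$ with $\varphi$ via $\gamma_q \ge 1$ (hence $h(-\gamma_q) \le 1/4$), obtain $\zeta_q(a_M) \ge M$ and $\zeta_q(a) \to 0$ as $a \to R^-$, and apply the intermediate value theorem, with the $q=0$ case reducing to Proposition~\ref{technical}.

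For uniqueness, both you and the paper aim for strict monotonicity of $\zeta_q$. Your derivative formula $\zeta_q'(a) = -\gamma_q(a) - \mu'(a)\, A(a)$ with $\mu(a) = a\, h(-\gamma_q(a))$ is correct and is in fact more careful than what the paper records: the paper writes simply $\zeta_q'(a) = -\gamma_q(a) - A(a)$, apparently dropping the factor $\mu'(a)$, and concludes negativity directly from that expression. So you have correctly located a subtlety that the paper glosses over.

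However, your proposal does not close the gap either. In the regime $\gamma_q(a)^2 > 1 + \sqrt 2$ (which, as you observe, can occur for $qa$ moderately large, even under $qR \le 1$), you only sketch a plan---transform $A(a)$ by the substitution $t = -h^{-1}(\mu(a)/r)$ and combine with the explicit form of $\mu'$ and with the bound $qR \le 1$---without carrying out the verification of the key inequality $|\mu'(a)|\, A(a) < \gamma_q(a)$. That inequality is not immediate and constitutes the real content of the uniqueness claim in this regime. Note also that the proposition as stated carries no hypothesis $qR \le 1$; if your argument genuinely needs it, you are proving a weaker statement than what is asserted (though one that suffices for the application in Theorem~\ref{radth}).

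In short: existence is fine and matches the paper; for uniqueness you go further than the paper's own displayed computation by identifying the genuine difficulty, but the decisive estimate is left as an unexecuted plan rather than a proof.
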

\begin{proof}
Notice that
$\zeta_q$ is well defined on $(0,R)$, since $h\le\tfrac 14$ and $\gamma_q\ge 1$.
If $q=0$, then $\gamma_0\equiv 1$, and since $h(-1)=\tfrac14$ we obtain $\zeta_0(a)=\varphi(a)$, where $\varphi$ is defined by \eqref{varphidia}. Therefore, we are reduced to Lemma \ref{technical} in this case, and we find $a_*=a_M$.

Let $q>0$. Then $h(-\gamma_q(a))<\tfrac 14$ on $(0,R)$, so that $-h^{-1}(\tfrac{a_M h(-\gamma_q(a_M))}{r})>-h^{-1}(\tfrac{a_M}{4r})$ on $(a_M,R)$,
 hence, by Lemma \ref{technical}, $\zeta_{q}(a_{M})> M$. On the other hand, $\lim_{a\to R}\zeta_{q}(a)=0$,
and by taking into account that \[\zeta'_{q}(a)=-\gamma_{q}(a)-\int _{a}^{R}\dfrac{dr}{rh'(h^{-1}({ah(-\gamma_{q}(a))}/{r} ))}< 0,\]
the result follows.
\end{proof}

\noindent{\bf Proof of Theorem \ref{radth}}.
\begin{proof}
Let $u\in C^0([0,R])$ be solution to \eqref{radialproblem}.
Since the assumptions of Lemma \ref{lastbutone} are satisfied, we have $u(R)=0$, $\max u=M$, $a:=\max\{x\in[0,R]\colon u(x)=M\}<R$, and moreover $u'\le -1$ on $(a,R)$.
We concentrate on the interval $(a,R)$, where first variation of the resistance functional yields
\[\int_a^{R} \dfrac{ru'\varphi'dr}{(1+u'^{2})^{2}}=0\]
for every $\varphi\in C^1_{0}(a,R)$, that is there exists a constant $\eta>0$ such that
\begin{equation*}\dfrac{-ru'}{(1+u'^{2})^{2}}=\eta\end{equation*}
a.e. in $(a,R)$.
We get therefore
  $h(u'(r))=\eta/r$, $h$ being defined in Proposition \ref{technical}. Hence, $4\eta/r\in (0,1]$ for every $r\in (a,R)$, that is   $0< \eta\le a/4$. Since $u(R)=0,  \ u(a)=M$, then $\eta$ has to satisfy
 \begin{equation*}-\int_{a}^{R}h^{-1}\left(\frac\eta r \right)\,dr=M,
 \end{equation*}
 which implies
 \[-\int_{a}^{R}h^{-1}\left(\frac a{4r}\right)\,dr\le M,\]
 that is $a\in [a_{M}, R)$, where $a_M$ is defined in Proposition \ref{technical}.

 Summing up if $u\in C^{0}([0,R])$ solves \eqref{radialproblem},  there exist  $a\in [a_{M},R)$ and, by Proposition \ref{technical}, a unique $\eta=\eta(a) \in (0, a/4]$
 such that (also using Lemma \ref{centerad}),
 \[u(r)=\dfrac{q}{2}(r^{2}-a^{2})+M
 \quad \text{in $[0,a]$},\]
 \[u(r)=-\int_{r}^{R}h^{-1}\left(\frac{\eta(a)}{s}\right)\,ds
 \quad \text{in $(a,R]$}\] and the latter profile has resistance is given by
 \begin{equation*}
 \mathcal{E}(a):=\int_{0}^{a}\dfrac{r\,dr}{1+q^2r^2}+\int_{a}^{R}\dfrac{r\,dr}{1+|h^{-1}(\eta(a)/{r})|^{2}}.
  \end{equation*}

 We are now left to minimize over $a\in [a_M,R)$. That is,
 we have  $\mathscr{D}_{{{R}}}(u)=\min_{a\in [a_{M},R)}\mathcal E(a)$.
 Proposition \ref{technical}  shows that the map $[a_M,R)\ni a\mapsto \eta(a)$ is $C^1$ and strictly decreasing.
   By using the definition of function $h$, and by taking into account  formula \eqref{etaprime} of Proposition \ref{technical}, we have
  \begin{equation*}\begin{aligned}
 \mathcal E'(a)&=\dfrac{a}{1+q^{2}a^{2}}-\dfrac{a}{1+|h^{-1}(\eta(a)/{a})|^{2}}+2\eta'(a)\int_{a}^{R}\dfrac{-h^{-1}(\eta(a)/{r})\,dr}{(1+|h^{-1}(\eta(a)/{r})|^{2})^{2}h'(h^{-1}(\eta(a)/r))}
  \\&=
  \dfrac{a}{1+q^{2}a^{2}}-\dfrac{a}{1+|h^{-1}(\eta(a)/{a})|^{2}}+2\eta'(a)\eta(a)\int_{a}^{R}\dfrac{dr}{rh'(h^{-1}(\eta(a)/r))}\\
  &=
  \dfrac{a}{1+q^{2}a^{2}}-\dfrac{a}{1+|h^{-1}(\eta(a)/{a})|^{2}}+2\eta(a)h^{-1}(\eta(a)/{a}).
  \end{aligned}
  \end{equation*}
  A computation then shows that $\mathcal E'(a)\ge 0$ if and only if
  \[(1+|h^{-1}(\eta(a)/{a})|^{2})^{2}\ge (3|h^{-1}(\eta(a)/{a})|^{2}+1)(1+q^{2}a^{2})\]
  that is if and only if $h^{-1}(\eta(a)/{a})\le -\gamma_{q}(a)$, where $\gamma_q$ is the function defined in Proposition \ref{crucialpt}, or equivalently $\eta(a)\le ah(-\gamma_{q}(a))$.
  But $\eta(a_{M})=\tfrac{a_{M}}{4}> a_{M}h(-\gamma_{q}(a_{M}))$ while $Rh(-\gamma_{q}(R))>0=\lim_{a\to R}\eta(a)$, hence
  the equation $\eta(a)= ah(-\gamma_{q}(a))$ (equivalent to $\mathcal{E}'(a)=0$) has at least a solution  $a_{*}\in [a_{M}, R)$ which is necessarily unique by Proposition \ref{crucialpt}
  since
  \[-\int_{a_*}^{R}h^{-1}\left(\frac{\eta(a_{*})}{r}\right)\,dr=M=-\int_{a_*}^{R}h^{-1}\left(\frac{a_{*}h(-\gamma_{q}(a_{*}))}{r}\right)\,dr.\]
  Therefore, under the assumptions $0\le qR\le 1$ and $2M\ge qR^2$, problem \eqref{radialproblem} has a unique solution, characterized by the number $a^*$ coming from Proposition \ref{crucialpt}, with  $u'(r)=h^{-1}(\tfrac{\eta(a_{*})}{r})$ in $(a_{*},R)$ and $u(a_*)=M$. The proof is completed.
  \end{proof}

 \begin{remark} 	\rm We note that $\gamma_{0}(a)\equiv 1$, hence when $q=0$ we get $a_{*}=a_{M}$ and $\eta(a_{*})=\tfrac{a_{M}}{4}$,
 thus obtaining the classical concave radial minimizer.
 \end{remark}


\section{Approximation of optimal profiles in the general two-dimensional case}\label{numericalsec}

{

To conclude our study, we discuss the approximation of optimal
$q$-concave graphs with no radiality assumption.
 For $M>0$ and $q>0$, we provide in this section a numerical optimization algorithm
to approximate   $q$-concave profiles  of  $\mathcal{C}^M_q(\Omega)$
which minimize $D_\Omega$,
where $\Omega$ is the unit disk of the plane. }
Following \cite{LO}, we know that the main difficulty of this constrained shape
optimization problem comes from its great number of local minima. In order to tackle this
difficulty, we introduce a discretization of the problem with few parameters which
makes it possible to perform a stochastic optimization.

As in \cite{LO}, we parametrize optimal graphs as the convex hull of
a set of points. Consider a sampling $C_1,\ldots, C_n$ of  the unit circle $\partial \Omega$  made of $n$ points and let
 $\Omega_n\subset\Omega$ be the convex hull of this sampling. We introduce the cylindrical parametrization $\Phi_{M,q}$, defined for {
$(r,\theta,z) \in [0,1]\times[ 0,2 \pi ] \times  [0,1]$,}  by
$$\Phi_{M,q}(r,\theta,z) := (r \cos (\theta),r \sin (\theta),zM-q(r^2-1)/2).$$
If $\{P_1,\dots , P_m \}$ are $m$ points of {
$[0,1]\times [ 0,2 \pi ] \times  [0,1]$, we consider}
$$ \mathcal{G}_{P_1,\dots , P_m} := \text{Co}(\Omega_n  , \Phi_{M,q}(P_1),\dots,\Phi_{M,q}(P_m)) \setminus \Omega_n  ,$$
which is the convex-hull of the union of the points $\Phi_{M,q}(P_1),\dots,\Phi_{M,q}(P_m), C_1,\ldots, C_n$, minus
 $\Omega_n$.   {   $ \mathcal{G}_{P_1,\dots , P_m}$ is the polygonal graph} of a concave
function on $\Omega_n $. Moreover, if we denote by $v_{P_1,\dots , P_m}$
this  associated function, we have that
$$ u_{P_1,\dots , P_m}(x) := v_{P_1,\dots , P_m}(x) + q(|x|^2-1)/2,\quad x\in\Omega_n$$
is $q$-concave and has values in $[0,M]$. Conversely, every $q$-concave
function on $\Omega$ with values in  $[0,M]$ can be approximated by this procedure.

Let us focus now on the cost function evaluation, that is, on  the approximation of
\begin{equation*}
    D_{\Omega_n}(u_{P_1,\dots , P_m})=\int_{\Omega_n}\frac{dx}{1+|\nabla u_{P_1,\dots , P_m}(x)|^2}.
\end{equation*}
First, we observe that the situation is more complicated than the classical case
$q=0$ studied in \cite{LO}. As a matter of fact, the computation of
$D_{\Omega_n}(u_{P_1,\dots , P_m})$
does not reduce to a purely geometrical integral since $u_{P_1,\dots , P_m}$ is
not piecewise linear anymore. To provide a precise estimate of the previous integral,
we notice that $u_{P_1,\dots , P_m}$ is quadratic on every triangle $\tau$ obtained as
the projection on $\Omega$ of one triangular face of $\mathcal{G}_{P_1,\dots , P_m}$.
Moreover the integral
$$\int_{\tau}\frac{dx}{1+|\nabla u_{P_1,\dots , P_m}(x)|^2}$$
can be approximated by a Gauss quadrature formula of order $d $ if we provide the evaluation
of $u_{P_1,\dots , P_m}$ at every control points of the quadrature. We summarize
the different steps required for one cost function evaluation in Algorithm \ref{algo:cost},
choosing a Gauss quadrature { with $n_c$ } control points.
\begin{algorithm}[t]
    \caption{Cost evaluation.}
    \label{algo:cost}
    \begin{description}
        \item[Input] $M>0$, $q >0$,  a sampling of  $\partial \Omega$ { with points
        $\{ C_1, \dots C_n \}$, } and
        parameters $(r_1,\theta_1,z_1), \dots, (r_m,\theta_m,z_m)$
        \item[Convex Hull] Compute the convex hull of $\{ C_1, \dots C_n \} \cup
        \{ \Phi_{M,q}(P_1),\dots,\Phi_{M,q}(P_m) \}$ (complexity of order $(m+n) \log (m+n)$)
        \item[Triangulation] Project every triangular face { on $ \Omega$} to obtain a triangulation $\mathcal{T}$ of the convex hull of $\{ C_1, \dots C_n \}$.
        \item[Gauss control points]  For every $\tau \in  \mathcal{T}$, compute the
        associated $n_c$ control points $\{ Q_1^\tau, \dots Q_{n_c}^\tau \}.$
        \item[Evaluation]  For every $\tau \in \mathcal{T}$, for every control point $Q^\tau$,
        compute $\nabla u_{P_1,\dots , P_m}(Q^\tau)$. This step is reduced to a linear interpolation
        and a quadratic evaluation.

        \item[Output] return the Gauss quadrature approximation based on the control
        points $(Q^\tau_l)_{1 \leq l \leq {n_c},\, \tau \in \mathcal{T}}$.
    \end{description}
\end{algorithm}

\begin{figure}[ht]
    \centering
    \begin{tabular}{r r}
        \includegraphics[width=\widthh cm]{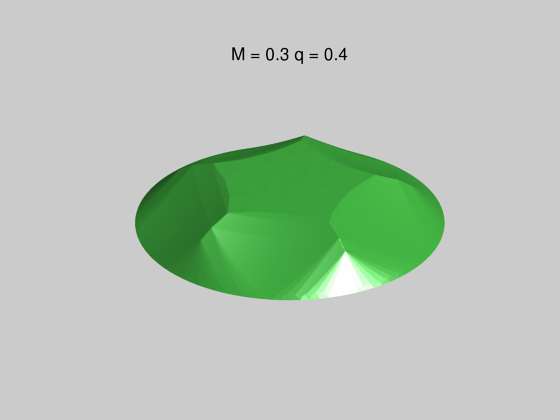}
        &\includegraphics[width=\widthh cm]{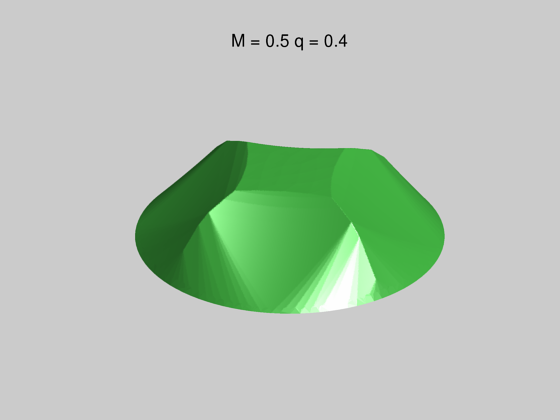}\\
        \includegraphics[width=\widthh cm]{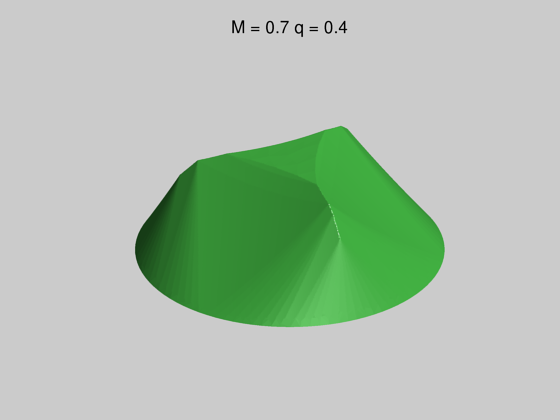}
        &\includegraphics[width=\widthh cm]{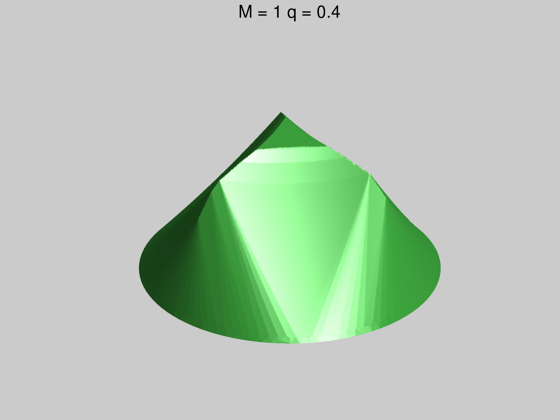}
    \end{tabular}
    \caption{Optimal computed profiles for $q=0.4$ and $M=0.3,\, 0.5, 0.7,\, 1$}
    \label{fig:f0}
\end{figure}

Based on this discretization involving only a few parameters $m=50$ (that is $150$
parameters), { $d=10$, $n_c=100$ and  $n=100$} it has been possible to perform in five hours $10^7$  evaluations of the discretized cost
function on a standard recent laptop. We used the algorithm
\texttt{adaptive\_de\_rand\_1\_bin\_radiuslimited} provided by the \texttt{BlackBoxOptim}
library (see \cite{BlackBoxOptim}). We represent in  Figure \ref{fig:f0}, several
$q$-concave optimal profiles for the same value $q=0.4$. The observed  qualitative
behavior is analogous to the one of the solutions computed in \cite{LO} in the case $q=0$:

\begin{itemize}
    \item Optimal graphs touch the constrained height hyperplane on a curvilinear
    polygon which seems  to be regular. By the way, notice that for $q>0$, there
    is no flat upper contact anymore. This flat part is replaced by a parabola
    when $q>0$,
    \item singular arcs, raising from the vertices of the upper polygon, can be
    observed in the graph,
    \item non strictly { concave } parts of the graph for $q=0$ are substituted by
    parabolic  patches.
\end{itemize}


 \section{Appendix: single shock and $q$-concave profiles}

The single shock condition reflects  the physical fact that every fluid particle hits the body at most once.
We shall deduce a corresponding geometric constraint on the body profile. See also \cite{BFK1, clr2, P1}.

Let $\Omega\subset \mathbb{R}^{n}$ a bounded convex open set and let $u:\Omega\to \mathbb{R}$ an a.e. differentiable function. We consider  a single point particle, moving in $\text{epi } u$ and approaching the graph of $u$ vertically downwards (i.e., along the direction of the coordinate vector $\mathbf{e}_{n+1}$) with constant nonnull velocity $\vv=-v\mathbf{e}_{n+1}$, $v>0$. We suppose that the particle hits the graph of $u$ elastically     at the point $(x_{0},u(x_{0}))\in\mathbb{R}^{n+1}$, such that $\nabla u(x_{0})$ exists. Furthermore we assume that the particle is reflected according to the usual laws of reflection.
Denoting by $\nu_0$ the outward normal unit vector at $(x_{0},u(x_{0}))$, i.e.,
\[{\nu}_{{0}}:=\left(\frac{-\nabla u(x_{0})}{\sqrt{1+|\nabla u (x_{0})|^{2}}},\frac{1}{\sqrt{1+|\nabla u (x_{0})|^{2}}}\right),\]
we let  ${\tau}_{{0}}$ be a vector lying in the subspace of $\mathbb{R}^{n+1}$ generated by ${\vv}$ and ${\nu}_{{0}}$, such that ${\nu}_{{0}}\cdot {\tau}_{{0}}=0.$
We denote by $z(t)=({x}(t),y(t))\in\mathbb{R}^{n+1}$, $t>0$, the position of the particle after the shock, occurring at $t=0$. If we consider  the components of the velocity vector $z'(t)$ along $\nu_0$ and $\tau_0$,
 according to the laws of reflection we have to impose
\[
\left\{
\begin{array}{rl}
\left[z'(t)\cdot{\nu}_{{0}}\right]{\nu}_{{0}}&=-\left({\vv}\cdot {\nu}_{{0}}\right){\nu}_{{0}}\\
\left[z'(t)\cdot{\tau}_{{0}}\right]{\tau}_{{0}}&=\left({\vv}\cdot {\tau}_{{0}}\right){\tau}_{{0}},
\end{array}
\right.
\]
that is,
\[
\left\{
\begin{array}{rl}
\left[z'(t)\cdot{\nu}_{{0}}\right]{\nu}_{{0}}&=-\left({\vv}\cdot {\nu}_{{0}}\right){\nu}_{{0}}\\
 z'(t)-\left[z'(t)\cdot {\nu}_{{0}}\right]{\nu}_{{0}}&={\vv}-\left({\vv}\cdot {\nu}_{{0}}\right){\nu}_{{0}}.
\end{array}
\right.
\]
So we obtain that
\[\begin{aligned}
z'(t)&={\vv}-2\left({\vv}\cdot {\nu}_{{0}}\right){\nu}_{{0}}
={\vv}+\frac{2v}{\sqrt{1+|\nabla u (x_{0})|^{2}}}{\nu}_{{0}}=	\vv+\dfrac{2v}{1+\left|\nabla u (x_{0})\right|^{2}}\left(-\nabla u (x_{0}),1\right)\\&=\left(-2\,\frac{\nabla u (x_{0})}{1+|\nabla u (x_{0})|^{2}}\,v\,,\,\frac{1-|\nabla u (x_{0})|^{2}}{1+|\nabla u (x_{0})|^{2}}\,v\right).
\end{aligned}\]
The trajectory of the particle after the collision is therefore described for $t>0$ by
\[
\left\{
\begin{array}{rl}
{x}(t)&=x_{0}-2\,\dfrac{\nabla u (x_{0})}{1+\left|\nabla u (x_{0})\right|^{2}}\,vt\\
{y}(t)&=u(x_{0})+\dfrac{1-\left|\nabla u (x_{0})\right|^{2}}{1+\left|\nabla u (x_{0})\right|^{2}}\,vt.
\end{array}
\right.
\]
The single shock condition at $(x_{0},u(x_{0}))$, which is
$u({x}(t))\le y(t)$ for any $t>0$, is then given by
\[u\left(x_{0}-2\,\dfrac{\nabla u (x_{0})}{1+\left|\nabla u (x_{0})\right|^{2}}\,vt\right)\le u(x_{0})+\dfrac{1-\left|\nabla u (x_{0})\right|^{2}}{1+\left|\nabla u (x_{0})\right|^{2}}\,vt.\]
If we rescale the time by  letting
$\tilde{t}_{x_{0}}:=\tfrac{2vt}{1+|\nabla u(x_{0})|^{2}}$,
the above inequality rewrites as follows
\[u\left(x_{0}-\tilde{t}_{x_{0}}\nabla u(x_{0})\right)\le u(x_{0})+\dfrac{\tilde{t}_{x_{0}}}{2}\left(1-|\nabla u(x_{0})|^{2}\right).\]

The above discussion motivates the following
\begin{definition} Let $\Omega$ be an open bounded convex subset of $\mathbb{R}^{n}$. We say that $u\colon\Omega\to\mathbb{R}$ is a \textit{single shock function on $\Omega$} if $u$ is  a.e. differentiable in $\Omega$ and
\begin{equation*}\label{single}u\left(x-\tau\nabla u(x)\right)\le u(x)+\dfrac{\tau}{2}\left(1-|\nabla u(x)|^{2}\right)\end{equation*}
for a.e. $x\in\Omega$ and for every $\tau>0$ such that  $x-\tau\nabla u(x)\in\Omega$.
\end{definition}

Next we discuss the relation between single shock and $q$-concave profiles. We start by recalling the definition of $q$-concavity.

\begin{definition}($q$-concave function) Let $\Omega$ be 
a convex subset of $\mathbb{R}^{n}$ and $q\ge 0$.  A function $u\colon \Omega\to \mathbb{R}$ is said to be  \textit{$q$-concave on $\Omega$} if  the map
$x\mapsto u(x)-\tfrac q2 \left|x\right|^{2}$  is  concave  on $\Omega$.  
Equivalently, $u$ is $q$-concave on $\Omega$ if and only if
\[u\left(\lambda x+\left(1-\lambda\right)y\right)\ge \lambda u(x)+(1-\lambda)u(y)-\dfrac{q}{2}\lambda(1-\lambda)\left|x-y\right|^{2}\] for every $x,y\in \Omega$ and for every $\lambda\in [0,1]$.  \end{definition}

\begin{lemma}\label{qshock}
Let  $q\ge 0$ and $\Omega\subset\mathbb{R}^{n}$ be a bounded convex open set. If $u\colon \Omega\to\mathbb{R}$ is a $q$-concave function on $\Omega$, and $q\,\text{diam}(\Omega)\le 2$, than $u$ has the single shock property on $\Omega$. In particular, if $u$ is concave then it is single shock in $\Omega$.
\end{lemma}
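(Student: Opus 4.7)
The plan is to exploit the $q$-concavity directly through the supporting hyperplane inequality for the concave function $v(x):=u(x)-\tfrac{q}{2}|x|^2$. Fix a differentiability point $x\in\Omega$ of $u$ (which is also a differentiability point of $v$), and let $\tau>0$ be such that $y:=x-\tau\nabla u(x)\in\Omega$. Since $v$ is concave and $\nabla v(x)=\nabla u(x)-qx$, the standard subgradient inequality gives
\[
v(y)\le v(x)+\nabla v(x)\cdot(y-x)=v(x)-\tau\bigl(\nabla u(x)-qx\bigr)\cdot\nabla u(x).
\]

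Next I would rewrite this in terms of $u$, by expanding $|y|^2=|x|^2-2\tau\,x\cdot\nabla u(x)+\tau^2|\nabla u(x)|^2$. The linear-in-$x$ cross terms cancel cleanly, and one is left with
\[
u(y)\le u(x)-\tau|\nabla u(x)|^2+\frac{q\tau^2}{2}|\nabla u(x)|^2.
\]
Comparing this bound with the single-shock right-hand side $u(x)+\tfrac{\tau}{2}\bigl(1-|\nabla u(x)|^2\bigr)$ and simplifying reduces the desired inequality to the purely algebraic condition
\[
q\tau|\nabla u(x)|^2\le 1+|\nabla u(x)|^2.
\]

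The final step is to close this estimate using both the diameter constraint and an elementary AM-GM bound, which will be the only nontrivial observation. Since $x,y\in\Omega$ we have $\tau|\nabla u(x)|=|y-x|\le\mathrm{diam}(\Omega)$, so the hypothesis $q\,\mathrm{diam}(\Omega)\le 2$ yields $q\tau|\nabla u(x)|\le 2$, and therefore $q\tau|\nabla u(x)|^2\le 2|\nabla u(x)|\le 1+|\nabla u(x)|^2$, the last step being $(|\nabla u(x)|-1)^2\ge 0$. This establishes the single shock condition at $x$, and since differentiability holds almost everywhere in $\Omega$ the claim follows. The concave case $q=0$ is immediate from the same computation (the quadratic term vanishes and only the AM-GM step is used). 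I do not anticipate a serious obstacle; the only care needed is to verify that the cancellations in the expansion step actually produce the clean inequality above, so that the diameter hypothesis enters at exactly the right place.
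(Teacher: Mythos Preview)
Your proof is correct and follows essentially the same route as the paper's own argument: both start from the $q$-concavity inequality $u(y)\le u(x)-\tau|\nabla u(x)|^2+\tfrac{q\tau^2}{2}|\nabla u(x)|^2$ (you derive it via the supporting hyperplane for $v=u-\tfrac q2|\cdot|^2$, the paper states it directly), then use $\tau|\nabla u(x)|\le\mathrm{diam}(\Omega)$ together with $q\,\mathrm{diam}(\Omega)\le 2$ and the inequality $(|\nabla u(x)|-1)^2\ge 0$ to close. The only difference is cosmetic: you isolate the algebraic condition $q\tau|\nabla u(x)|^2\le 1+|\nabla u(x)|^2$ before verifying it, whereas the paper keeps everything in one chain of inequalities.
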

\begin{proof}[Proof.] Let $x\in\Omega$ be such that $\nabla u(x)$ exists, and let $\tau>0$ be such that $x-\tau\nabla u(x)\in\Omega$. Using the $q$-concavity of $u$, the fact that if $x-\tau\nabla u(x)\in\Omega$ then $\tau|\nabla u(x)|\le \text{diam}(\Omega)$, we have \begin{align*}
u(x-\tau\nabla u(x))&\le u(x)+\tau\left(-|\nabla u(x)|^{2}+\dfrac{q\tau}{2}|\nabla u(x)|^{2}\right)\\
& \le u(x)+\tau\left(-|\nabla u(x)|^{2}+\dfrac{q}{2}|\nabla u(x)|\,\text{diam}(\Omega)\right)\\
&\le u(x)+\tau\left(-|\nabla u(x)|^{2}+|\nabla u(x)|\right)\\
&= u(x)+\dfrac{\tau}{2}\left(1-|\nabla u(x)|^{2}\right)-\dfrac{\tau}{2}\left(|\nabla u(x)|-1\right)^{2}\\
&\le u(x)+\dfrac{\tau}{2}\left(1-|\nabla u(x)|^{2}\right),
\end{align*}
where we made use of the assumption $q\,\text{diam}(\Omega)\le 2$.
\end{proof}
\begin{remark} \rm The inequality $q\,\text{diam}(\Omega)\le 2$ is sharp. Indeed, if $\Omega$ is a ball, centered at the origin,  and $q\,\text{diam}(\Omega)>2$, then the function $\wp_{q}\colon \Omega\to \mathbb{R}$ defined by $\wp_{q}(x):=\tfrac q2|x|^{2}$  is \text{not} a single-shock function on $\Omega.$
\end{remark}

Existence of minimizers of the resistance functional on  $\mathcal{C}_q^M(\Omega)$ follows the standard arguments.

\begin{lemma}\label{compactness} Let $\Omega$ be an open bounded convex subset of $\mathbb{R}^{n}$. Let $M>0$ and $q\ge 0$. Then for every $p\in[1,\infty)$ the class $\mathcal{C}_q^M(\Omega)$ is compact with respect to the strong topology of $W^{1,p}_{\emph{loc}}(\Omega)$.
\end{lemma}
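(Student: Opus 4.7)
The plan is to exploit the strong local regularity enjoyed by bounded concave functions on an open convex set. Given $(u_h) \subset \mathcal{C}_q^M(\Omega)$, set $v_h(x) := u_h(x) - \tfrac{q}{2}|x|^2$, so that each $v_h$ is concave on $\Omega$ and, since $u_h$ takes values in $[0,M]$ and $\Omega$ is bounded, the family $(v_h)$ is uniformly bounded by a constant depending only on $M$, $q$ and $\mathrm{diam}(\Omega)$.

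First I would invoke the classical local Lipschitz estimate for bounded concave functions: on any compact $K \subset \Omega$, the Lipschitz constant of any concave function bounded by $C$ is controlled in terms of $C$ and $\mathrm{dist}(K,\partial\Omega)$ alone (this follows from monotonicity of difference quotients and the existence of supporting hyperplanes). Hence $(v_h)$ is equibounded and equi-Lipschitz on each compact subset of $\Omega$. Exhausting $\Omega$ by compact sets and applying Arzel\`a-Ascoli together with a diagonal extraction, we get a subsequence (not relabelled) with $v_h \to v$ uniformly on every compact subset of $\Omega$. Concavity is preserved by pointwise limits, so $v$ is concave, and the same local Lipschitz estimate makes $v$ locally Lipschitz.

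The main step is then to upgrade uniform convergence to $W^{1,p}_{\mathrm{loc}}$ convergence for any $p \in [1,\infty)$. Here I would use the classical result in convex analysis (Rockafellar, \emph{Convex Analysis}, Theorem~25.7): if concave functions converge locally uniformly, then at every point where the limit is differentiable their gradients converge to the gradient of the limit. Since $v$ is differentiable almost everywhere in $\Omega$, we obtain $\nabla v_h \to \nabla v$ a.e.\ in $\Omega$. Combined with the uniform pointwise bound on $|\nabla v_h|$ over each compact set (from the equi-Lipschitz property of Step~1), dominated convergence gives $\nabla v_h \to \nabla v$ in $L^p(K)$ for every compact $K \subset \Omega$ and every $p \in [1,\infty)$. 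Thus $v_h \to v$ strongly in $W^{1,p}_{\mathrm{loc}}(\Omega)$.

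Finally, setting $u := v + \tfrac{q}{2}|\cdot|^2$, the subsequence $u_h$ converges strongly to $u$ in $W^{1,p}_{\mathrm{loc}}(\Omega)$. The bounds $0 \le u \le M$ pass to the limit pointwise, and $u$ is $q$-concave by construction, hence $u \in \mathcal{C}_q^M(\Omega)$, proving compactness. The only delicate point is the a.e.\ convergence of gradients, but this is a classical fact in convex analysis and requires no new argument in our setting; the rest of the proof is a standard application of Ascoli-Arzel\`a and dominated convergence.
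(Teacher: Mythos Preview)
Your proposal is correct and follows essentially the same route as the paper: reduce to uniformly bounded concave functions, use the local Lipschitz estimate plus Arzel\`a--Ascoli and a diagonal argument to get locally uniform convergence, then upgrade to $W^{1,p}_{\mathrm{loc}}$ via a.e.\ convergence of gradients and dominated convergence. The only cosmetic difference is that you subtract $\tfrac{q}{2}|x|^2$ at the outset and invoke Rockafellar's Theorem~25.7 for the a.e.\ gradient convergence, whereas the paper keeps the $q$-concave functions and reproves that step directly from monotonicity of difference quotients; both amount to the same argument.
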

\begin{proof}
First of all, a concave function $v$ on $\Omega$ taking values in $[0,M]$ satisfies, for every $K\subset\subset\Omega$,
\[\left|v\left(z_{1}\right)-v\left(z_{2}\right)\right|\le\frac{2M}{\emph{dist}\,\left(K,\partial\Omega\right)}\left|z_{1}-z_{2}\right|\quad\text{ for every }z_{1},z_{2}\in K.\]
Then, if $R>0$ is such that  $\Omega\subset B_0(R)$,     a $q$-concave function is Lipschitz continuous on any open subset $K$, compactly contained in $\Omega$, with Lipschitz constant not exceeding
$\tfrac{2M}{\emph{dist}(K,\partial\Omega)}+qR$.

Let $\left(u_{n}\right)_{n\in\mathbb{N}}$ be a sequence of elements of $\mathcal{C}_q^M(\Omega)$. We shall prove that there exists a strictly increasing sequence of natural numbers $\left(n_{k}\right)_{k\in\mathbb{N}}$ and $u\in \mathcal{S}_{M,q}(\Omega)$ such that
\[u_{n_k}\rightarrow u \text{ in }L^p\left(\Omega\right)\text{ e }  \nabla u_{n_k}\rightarrow  \nabla u\text{ in }L^p(K) \text{ for every }K\subset\subset \Omega.\]

\noindent The sequence $\left(u_{n}\right)_{n\in\mathbb{N}}$ is  equi-bounded and  equi-Lipschitz  on every $K\subset\subset \Omega$.  By Ascoli-Arzel\`a theorem, $\left(u_{n}\right)_{n\in\mathbb{N}}$ admits  a convergent subsequence in $C(K)$, for every $K\subset\subset \Omega.$  By a diagonal argument we may obtain the existence of a strictly increasing sequence of natural numbers $\left(n_{k}\right)_{k\in\mathbb{N}}$ and of a function $u\in C(\Omega)$, such that $u_{n_{k}}\to u$  uniformly on each $K\subset\subset \Omega$. \EEE Since
\begin{align*}
u\left(\lambda x+\left(1-\lambda\right)y\right)&=\lim\limits_{k\to+\infty}u_{n_{k}}\left(\lambda x+\left(1-\lambda\right)y\right)\\
&\ge \lim\limits_{k\to+\infty}\left[\lambda u_{n_{k}}(x)+\left(1-\lambda\right)u_{n_{k}}(y)-\dfrac{q}{2}\lambda\left(1-\lambda\right)\left| x-y\right|^{2}\right]\\&=\lambda u(x)+\left(1-\lambda\right)u(y)-\dfrac{q}{2}\lambda\left(1-\lambda\right)\left| x-y\right|^{2}
\end{align*}
for every $x,y\in\Omega$ and for every $\lambda\in[0,1]$, $u$ is $q$-concave on $\Omega$. Moreover, since $u_{n_{k}}(x)\in [0,M]$ for every $x\in \Omega$ and for every $k\in\mathbb{N}$, we have $u(x)\in[0,M]$ for every $x\in \Omega$. Thus $u\in\mathcal{C}_q^M(\Omega)$.
Now, since $\Omega$ is bounded and $\left(u_{n_{k}}\right)_{k\in\mathbb{N}}$ is an equi-bounded subsequence, by dominated convergence we infer that $u_{n_{k}}\to u$ in $L^{p}(\Omega).$
In order to conclude we have to show that $\nabla u_{n_{k}}\to \nabla u$ in $L^{p}(K)$ for every $K\subset\subset \Omega$. Since $\left(u_{n_{k}}\right)_{k\in\mathbb{N}}$ is equi-Lipschitz continuous on each $K\subset\subset \Omega$, we have that $\left( \nabla u_{n_{k}}\right)_{k\in\mathbb{N}}$ is equi-bounded on each $K\subset\subset \Omega$. So,  it suffices to prove that
\[
\nabla u_{n_{k}}(x)\to\nabla u(x)\text{ for a.e. }x\in \Omega.
\]
 Let $i\in\left\{1,\ldots,n\right\}$ and let $x\in \Omega$ be a fixed point where all $u_{n_{k}}$ $(k\in\mathbb{N})$ and $u$ are differentiable (almost every point of $\Omega$ meets this requirement). Denoting by ${e}_{i}$ the  $i$-th vector of the standard basis in $\mathbb{R}^{n}$ and letting $\varphi_{n_k}(x):=u_{n_k}(x)-\tfrac q2x^2$, since the functions $t\mapsto \varphi_{{n_{k}}}(x+te_{i})$ are concave, there exists $\varepsilon_{0}=\varepsilon_{0}(i,x)>0$ such that, for every $\varepsilon \in(0,\varepsilon_{0})$
\[\dfrac{\varphi_{{n_{k}}}(x+\varepsilon e_{i})-\varphi_{{n_{k}}}(x)}{\varepsilon}\le \partial_{i}\varphi_{{n_{k}}}(x)\le\dfrac{\varphi_{{n_{k}}}(x-\varepsilon e_{i})-\varphi_{{n_{k}}}(x)}{-\varepsilon}\]
from which, adding $qx_{i}$  and taking into account that  $\partial_{i}\varphi_{u_{n_{k}}}(x)=\partial_{i}u_{n_{k}}(x)-qx_{i}$, we have
\[\dfrac{u_{n_{k}}(x+\varepsilon e_{i})-u_{n_{k}}(x)}{\varepsilon}-\dfrac{q\varepsilon}{2}\le \partial_{i}u_{n_{k}}(x)\le\dfrac{u_{n_{k}}(x-\varepsilon e_{i})-u_{n_{k}}(x)}{-\varepsilon}+\dfrac{q\varepsilon}{2}.\] Passing to the limit as $k\to +\infty$, for every $\varepsilon\in(0,\varepsilon_{0})$ we obtain
\[\dfrac{u(x+\varepsilon e_{i})-u(x)}{\varepsilon}-\dfrac{q\varepsilon}{2}\le \liminf\limits_{k}\partial_{i} u_{n_{k}}(x)\le\limsup\limits_{k}\partial_{i}u_{n_{k}}(x)\le \dfrac{u(x-\varepsilon e_{i})-u(x)}{-\varepsilon}+\dfrac{q\varepsilon}{2}.\] Passing now to the limit as $\varepsilon\to 0$ we have
\[\partial_{i}u(x)\le \liminf\limits_{k}\partial_{i} u_{n_{k}}(x)\le\limsup\limits_{k}\partial_{i}u_{n_{k}}(x)\le \partial_{i}u(x),\] that is,  $\lim\limits_{k\to+\infty}\partial _{i}u_{n_{k}}(x)=\partial_{i}u(x).$
\end{proof}

\begin{corollary}
 Let $\Omega$ be an open bounded convex subset of $\mathbb{R}^{n}$. Let $M>0$ and $q\ge 0$. The resistance functional $D_\Omega$ admits a minimizer on $\mathcal{C}_q^M(\Omega)$.
\end{corollary}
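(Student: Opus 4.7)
The plan is a direct application of the direct method of the calculus of variations, relying crucially on the compactness provided by Lemma \ref{compactness}. First I would fix $p \in [1,\infty)$ (say $p=1$) and take a minimizing sequence $(u_n)_{n\in\mathbb{N}} \subset \mathcal{C}_q^M(\Omega)$ for $D_\Omega$, i.e.\ a sequence such that $D_\Omega(u_n) \to \inf_{\mathcal{C}_q^M(\Omega)} D_\Omega$. Notice that the infimum is finite since $0 \le D_\Omega(u) \le |\Omega|$ for every $u \in \mathcal{C}_q^M(\Omega)$.

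Next, I would apply Lemma \ref{compactness} to extract a (not relabeled) subsequence converging to some $u \in \mathcal{C}_q^M(\Omega)$ strongly in $W^{1,p}_{\mathrm{loc}}(\Omega)$. In particular, $\nabla u_n \to \nabla u$ in $L^p(K; \mathbb{R}^n)$ for every compact $K \subset\subset \Omega$, so by a further diagonal subsequence extraction we obtain pointwise convergence $\nabla u_n(x) \to \nabla u(x)$ for a.e.\ $x \in \Omega$.

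The final step is to pass to the limit in $D_\Omega$. Since the integrand
\[
\frac{1}{1+|\nabla u_n(x)|^2}
\]
is bounded above by the constant $1$ (which is integrable on the bounded set $\Omega$) and converges pointwise a.e.\ to $\frac{1}{1+|\nabla u(x)|^2}$, Lebesgue's dominated convergence theorem yields $D_\Omega(u_n) \to D_\Omega(u)$. Therefore $D_\Omega(u) = \inf_{\mathcal{C}_q^M(\Omega)} D_\Omega$, so $u$ is a minimizer. There is no real obstacle here: all the work has been absorbed into Lemma \ref{compactness}, and the bounded, continuous integrand makes the limit passage entirely routine (one does not even need lower semicontinuity arguments, as full convergence of the energy is available).
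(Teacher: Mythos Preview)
Your proof is correct and follows essentially the same approach as the paper: both rely on Lemma \ref{compactness} for compactness and then pass to the limit via dominated convergence, using that the integrand is bounded by $1$. The only minor difference is that you extract a further subsequence to get a.e.\ convergence of gradients from the $L^p_{\mathrm{loc}}$ convergence, whereas the proof of Lemma \ref{compactness} already establishes this a.e.\ convergence directly, so that step is redundant (though harmless).
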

\begin{proof}
Notice that, by dominated convergence, functional $D_\Omega$ is continuous with respect to the a.e. convergence of gradients.
\end{proof}

\subsection*{Acknowledgements}
E.M.  is member of the
GNAMPA group of the Istituto Nazionale di Alta Matematica (INdAM).

\end{document}